\newtheorem{theo}{Theorem}[section]
\newtheorem{lem}{Lemma}[section]
\newtheorem{prop}{Proposition}[section]
\newtheorem{rem}{Remark}[section]
\newtheorem{ass}{Assumption}[section]
\newcommand{\Nset}{\mathbb{N}}
\newcommand{\E}{\mbox{$\mathbb{E}$}}
\newcommand{\PP}{\mbox{$\mathbb{P}$}}
\newcommand{\CA}{\mathcal{A}}
\newcommand{\ep}{\mbox{$\varepsilon$}}
\newcommand{\scr}[1]{\scriptscriptstyle #1}
\newcommand{\lpref}{\smash{\raisebox{3.5pt}{\!\!\!\begin{tabular}{c}$\hskip-4pt\scriptstyle\longleftarrow$ \\[-7pt]{\rm pref}\end{tabular}\!\!}}}
\newcommand{\petitlpref}{\smash{\raisebox{2.5pt}{\!\!\!\begin{tabular}{c}$\hskip-2pt\scriptscriptstyle\longleftarrow$ \\[-9pt]{$\scriptstyle\hskip 1pt\rm pref$}\end{tabular}\!\!}}}
\newcommand{\ttu}{\mathtt u}
\newcommand{\ttd}{\mathtt d}
\newcommand{\tail}{\mathcal{T}}
\newcommand{\xRightarrow}[2][]{\ext@arrow 0359\Rightarrowfill@{#1}{#2}}
\newcommand{\xsim}[2][]{\ext@arrow 0359\Simfill@{#1}{#2}}
\author[1]{{\bf Peggy Cénac}}
\affil[1]{Institut de Mathématiques de Bourgogne (IMB) - UMR CNRS 5584

Université de Bourgogne Franche-Comté, 21000 Dijon, France}
\author[2]{{\bf Arnaud Le Ny}}
\affil[2]{Laboratoire d'Analyse et de Mathématiques Appliquées (LAMA) - UMR CNRS 8050

Université Paris Est Créteil, 94010 Créteil Cedex, France}
\author[3]{{\bf Basile de Loynes}}
\affil[3]{Ensai - Université de Bretagne-Loire, Campus de Ker-Lann, Rue Blaise Pascal, BP 37203, 35172 BRUZ cedex, France}
\author[1]{{\bf Yoann Offret}}
\begin{document}

\title{\bf Persistent random walks. II. Functional Scaling Limits}

\date{}

\maketitle

\noindent
\rule{\linewidth}{.5pt}

\vspace{1em}

\noindent
{\small {\bf Abstract}\; We give a complete and unified description -- under some stability assumptions -- of the functional scaling limits associated with some persistent random walks for which the recurrent or transient type is studied in \cite{PRWI}. As a result, we highlight a phase transition phenomenon with respect to the memory. It turns out that the limit process is either Markovian or not according to -- to put it in a nutshell --  the rate of decrease of the distribution tails corresponding to the persistent times. In the memoryless situation, the limits are classical strictly stable Lévy processes of infinite variations. However, we point out that the description of the critical Cauchy case fills some lacuna even in the closely related context of Directionally Reinforced Random Walks (DRRWs) for which it has not been considered yet. Besides, we need to introduced some relevant generalized drift -- extended the classical one --  in order to study the critical case but also the situation when the limit is no longer Markovian. It  appears  to be  in full generality a drift in mean for the Persistent Random Walk (PRW). The limit processes keeping some memory -- given by some variable length Markov chain -- of the underlying PRW  are called arcsine Lamperti anomalous diffusions due to their marginal distribution which are computed explicitly here. To this end, we make the connection with the governing equations for Lévy walks, the occupation times of skew Bessel processes and a more general class modelled on Lamperti processes.  We also stress that we clarify some misunderstanding regarding this marginal distribution in the framework of DRRWs. Finally, we stress that the latter situation is more flexible -- as in the first paper -- in the sense that the results can be easily generalized to a wider class of PRWs without renewal pattern.}

\vspace{1em}

\noindent
{\small {\bf Key words}\; Persistent random walk . Functional scaling limit . Arcsine Lamperti marginal distributions . Directionally reinforced random walk . Lévy walk  . Anomalous diffusion}

\vspace{1em}

\noindent
{\small {\bf Mathematics Subject Classification (2000)}\;   60F17 . 60G50 . 60J15 . 60G17 . 60J05 . 60G22 . 60K20}

\tableofcontents

%%%%%%%%%%%%%%%%%%%%%%%%%%%%%%%%%%%%%%%%%%%%%%%%%%%%%%%%%%%%%%%%%%%%%%%%%%%%%%%%%%%%%%%%%%%%%%%%%%%%%%%%%%%%%%%%%%%%%%%%%%%%%%%%%%%%%%%%%%%%%%%%%%%%%%%%%%%%%%%%%%%%%%%%%%%%%%%%%%%%%%%%%%%%%%%%%%%%%%%%%%%%%%%%%%%%%%%%%%%%%%%%%%%%%%%%%%%%%%%%%%%%%%%%%%%%%%%%%%%%%%%%%%%%%%%%%%%%%%%%%%%%%%%%%%%%%%%%%%%%%%%%%%%%%%%%%%%%%%%%%%%%%%%%%%%%%%%%%%%%%%%%%%%%%%%%%%%%%%%%%%%%%%%%%%%%%%%%%%%%%%%%%%%%%%%%%%%%%%%%%%%%%%%%%%%%%%%%%%%%%%%%%%%%%%%%%%%%%%%%%%%%%%%%%%%%%%%%%%%%%%%%%%%%%%%%%%%%%%%%%%%%%%%%%%%%%%%%%%%%%%%%%%%%%%%%%%%%%%%%%%%%%%%%%%%%%%%%%%%%%%%%%%%%%%%%%%%%%%%%%%%%%%%%%%%%%%%%%%%%%%%%%%%%%%%%%%%%%%%%%%%%%%%%%%%%%%%%%%%%%%%%%%%%%%%%%%%%%%%%%%%%%%%%%%%%%%%%%%%%%%%%%%%%%%%%%%%%%%%%%%%%%%%%%%%%%%%%%%%%%%%%%%%%%%%%%%%%%%%%%%%%%%%%%%%%%%%%%%%%%%%%%%%%%%%%%%%%%%%%%%%%%%%%%%%%%%%%%%%%%%%%%%%%%%%%%%%%%%%%%%%%%%%%%%%%%%%%%%%%%%%%%%%%%%%%%%%%%%%%%%%%%%%%%%%%%%%%%%%%%%%%%%%%%%%%%%%%%%%%%%%%%%%%%%%%%%%%%%%%%%%%%%%%%%%%%%%%%%%%%%%%%%%%%%%%%%%%%%%%%%%%%%%%%%%%%%%%%%%%%%%%%%%%%%%%%%%%%%%%%%%%%%%%%%%%%%%%%%%%%%%%%%%%%%%

\section{Introduction}

\label{Introduction}

This paper is a continuation of \cite{PRWI} in which recurrence versus transience features of some PRWs are described. More specifically, we still consider a walker $\{S_{n}\}_{n\geq 0}$ on $\mathbb Z$, whose jumps are of unit size, and such that at each step it keeps the same direction (or switches) with a probability directly depending on the time already spent in the direction the walker is currently moving. Here we aim at investigating functional scaling limits of the form 
\begin{equation}\label{scalingforme}
\left\{\frac{S_{\lfloor u\,t\rfloor}-\mathbf m_{\scr S}\,u\,t}{\lambda(u)}\right\}_{t\geq 0}\quad\mbox{or}\quad\left\{\frac{S_{u\,t}-\mathbf m_{\scr S}\,u\,t}{\lambda(u)}\right\}_{t\geq 0}\;\xRightarrow[u\to\infty]{\mathcal L}\; \{Z(t)\}_{t\geq 0},	
\end{equation}
for which some  functional convergence in distribution toward a stochastic process $Z$ holds. The continuous time stochastic process $\{S_{t}\}_{t\geq 0}$ above denotes the piecewise linear interpolation of the discrete time one $\{S_{n}\}_{n\geq 0}$. Due to the sizes of its jumps, the latter is obviously ballistic or sub-ballistic. In particular, the drift parameter $\mathbf m_{\scr S}$  belongs to $[-1,1]$ and that the growth rate of the normalizing positive function $\lambda(u)$ is at most linear.  In full generality, we aim at investigating PRWs  given by
\begin{equation}\label{defpersistpart}
S_0=0\quad\mbox{and}\quad S_n:=\displaystyle\sum_{k=1}^n X_k,\quad\mbox{for all $n\geq 1$},
\end{equation}
where a two-sided process of jumps $\{X_k\}_{n\in\mathbb Z}$ in an additive group $G$ is considered. In order to take into account possibly infinite reinforcements, the increment process is supposed to have a finite but possibly unbounded variable memory. More precisely, we assume  that it is built from a Variable Length Markov Chain (VLMC) given by some probabilized context tree. To be more explicit, let us give the general construction that fits to our model. 

\subsection{VLMC structure of increments}

Let $\mathcal{L}= \CA^{-\Nset}$ be the set of left-infinite words on the alphabet $\CA:=\{ \ttd,\ttu \}\simeq \{-1,1\}$ and  consider a complete tree on this alphabet, \emph{i.e.}\@  such that each node has $0$ or $2$ children, whose leaves $\mathcal{C}$ are words (possibly infinite) on $\mathcal A$. To each leaf $c\in\mathcal C$, called a context, is attached a probability distribution $q_{c}$ on $\mathcal A$. Endowed with this probabilistic structure, such a tree is named a probabilized context tree. The related VLMC is the Markov Chain $\{U_{n}\}_{n\geq 0}$  on $\mathcal{L}$ whose transitions are given by
\begin{equation}
 \label{eq:def:VLMC}
 \PP(U_{n+1} = U_n\ell| U_n)=q_{\petitlpref (U_n)}(\ell),
 \end{equation}
where $\lpref(w)\in\mathcal C$ is defined as the shortest prefix of $w=\cdots w_{-1}w_{0}$ -- read from right to left -- appearing as a leaf of the context tree. The kth increment $X_k$ of the corresponding PRW is  given as the rightmost letter of $U_k:=\cdots X_{k-1}X_k$ with the one-to-one  correspondence $\ttd=-1$ (for a descent) and $\ttu=1$ (for a rise). Supposing the context tree is infinite, the resulting PRW  is no longer Markovian and somehow very persistent. The associated two-sided process of jumps has a finite but possibly unbounded variable memory whose successive lengths are given by the so-called age time process defined for any $n\geq 0$ by 
\begin{equation}\label{agetimediscret}
A_{n}:=\inf\{k\geq 1 : X_{n}\cdots X_{n-k}\in \mathcal C\}.
\end{equation}

\subsection{Outline of the article}

The paper is organized as follows. In the next section, we recall the elementary assumption on $S$ -- the double-infinite comb PRW -- and we introduce our main Assumption \ref{assglobal} together with the quantity $\mathbf m_{\scr S}$ in (\ref{scalingforme}) -- named the mean drift -- and the normalizing function $\lambda(u)$. Thereafter -- in Section \ref{sectionresults} -- we state our main results, namely Theorems \ref{thmgene0}, \ref{scalingbrownian} and \ref{complement}. Together, the two latter are refinements and complements of the first one.  In Section \ref{sectioncontri} we make some remarks to step back on these results. The two following Sections \ref{sectionl\'{e}vy}  and \ref{sectionanomalous} are focused on the proofs in the two fundamental cases. Finally, in the last Section \ref{appendix} we state two useful and straightforward lemmas allowing several interpretations of the Assumption \ref{assglobal} and often implicitly used in the proofs.

%%%%%%%%%%%%%%%%%%%%%%%%%%%%%%%%%%%%%%%%%%%%%%%%%%%%%%%%%%%%%%%%%%%%%%%%%%%%%%%%%%%%%%%%%%%%%%%%%%%%%%%%%%%%%%%%%%%%%%%%%%%%%%%%%%%%%%%%%%%%%%%%%%%%%%%%%%%%%%%%%%%%%%%%%%%%%%%%%%%%%%%%%%%%%%%%%%%%%%%%%%%%%%%%%%%%%%%%%%%%%%%%%%%%%%%%%%%%%%%%%%%%%%%%%%%%%%%%%%%%%%%%%%%%%%%%%%%%%%%%%%%%%%%%%%%%%%%%%%%%%%%%%%%%%%%%%%%%%%%%%%%%%%%%%%%%%%%%%%%%%%%%%%%%%%%%%%%%%%%%%%%%%%%%%%%%
%%%%%%%%%%%%%%%%%%%%%%%%%%%%%%%%%%%%%%%%%%%%%%%%%%%%%%%%%%%%%%%%%%%%%%%%%%%%%%%%%%%%%%%%%%%%%%%%%%%%%%%%%%%%%%%%%%%%%%%%%%%%%%%%%%%%%%%%%%%%%%%%%%%%%%%%%%%%%%%%%%%%%%%%%%%%%%%%%%%%%%%%%%%%%%%%%%%%%%%%%%%%%%%%%%%%%%%%%%%%%%%%%%%%%%%%%%%%%%%%%%%%%%%%%%%%%%%%%%%%%%%%%%%%%%%%%%%%%%%%%%%%%%%%%%%%%%%%%%%%%%%%%%%%%%%%%%%%%%%%%%%%%%%%%%%%%%%%%%%

\section{Framework and assumptions}

\setcounter{equation}{0}

\label{Framework}

The model we consider corresponds to the double-infinite comb. Roughly speaking, the leaves -- coding the memory -- are words on $\{\ttd,\ttu\}\simeq\{-1,1\}$ of the form $\ttd^{n}\ttu$ and $\ttu^{n}\ttd$. It follows that the probability to invert the current direction depends only on the direction itself and of its present length. In the sequel, we refer to  Figure \ref{marche} that illustrates our notations and assumptions on the so-called  double-infinite comb PRW.

\begin{figure}[!b]
\centering
\includegraphics[width=124mm]{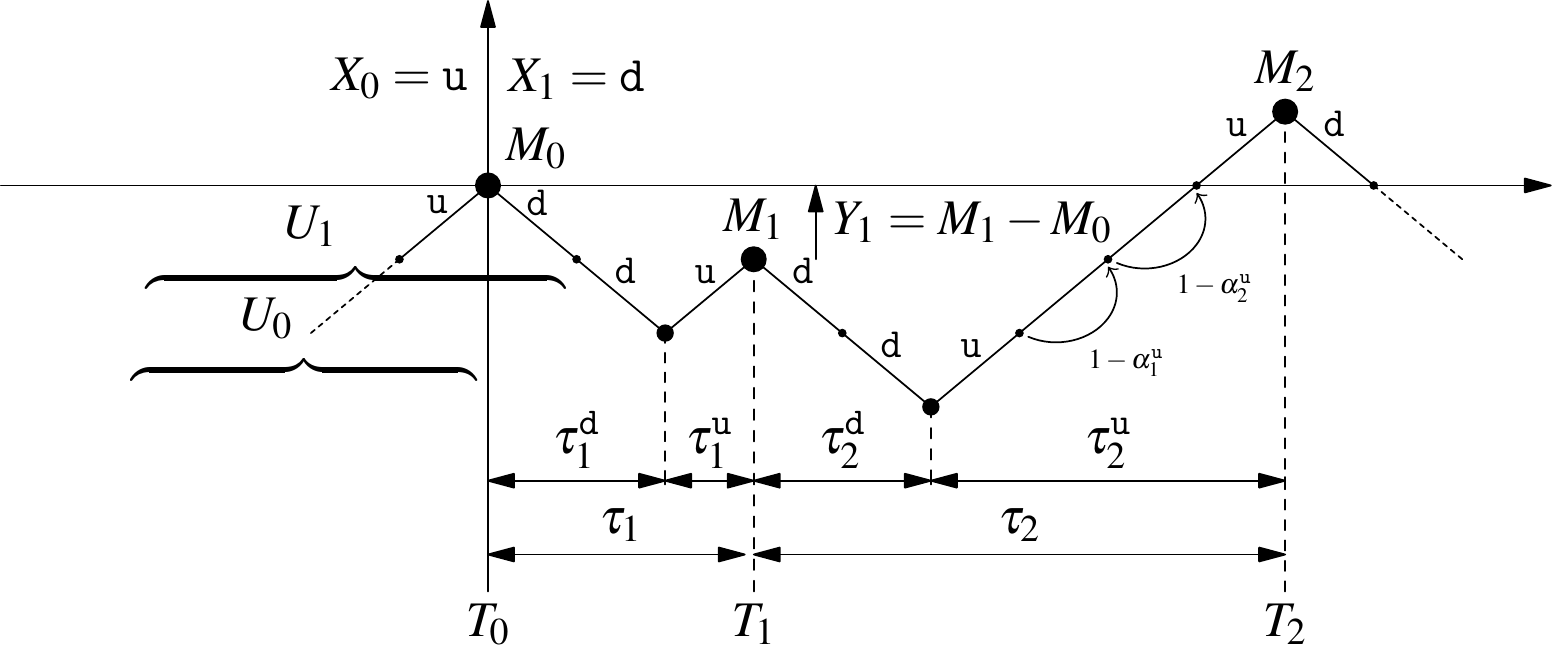}
\caption{A trajectory of S}
\label{marche}
\end{figure}

%%%%%%%%%%%%%%%%%%%%%%%%%%%%%%%%%%%%%%%%%%%%%%%%%%%%%%%%%%%%%%%%%%%%%%%%%%%%%%%%%%%%%%%%%%%%%%%%%%%%%%%%%%%%%%%%%%%%%%%%%%%%%%%%%%%%%%%%%%%%%%%%%%%%%%%%%%%%%%%%%%%%%%%%%%%%%%%%%%%%%
%%%%%%%%%%%%%%%%%%%%%%%%%%%%%%%%%%%%%%%%%%%%%%%%%%%%%%%%%%%%%%%%%%%%%%%%%%%%%%%%%%%%%%%%%%%%%%%%%%%%%%%%%%%%%%%%%%%%%%%%%%%%%%%%%%%%%%%%%%%%%%%%%%%%%%%%%%%%%%%%%%%%%%%%%%%%%%%%%%%%%%%%%%%%%%%%%%%%%%%%%%%%%%%%%%%%%%%%%%%%%%%%%%%%%%%%%%%%%%%%%%%%%%%%%%%%%%%%%%%%%%%%%%%%%%%%%%%%%%%%%%%%%%%%%%%%%%%%%%%%%%%%%%%%%%%%%%%%%%%%%%%%%%%%%%%%%%%%%%%

\subsection{Elementary settings and hypothesis}

\label{Settings}
We recall that this process  is characterized by the transition probabilities  
\begin{equation}\label{transitions}
\alpha_k^\ttd:=\PP(X_{k+1}=\ttu|X_k\cdots X_{0}=\ttd^{k}\ttu) \quad\textrm{and} \quad \alpha_k^\ttu:=\PP(X_{k+1}=\ttd|X_k\cdots X_{0}=\ttu^{k}\ttd).
\end{equation} 
where $X_{k}$ is the kth jump in $\{\ttd,\ttu\}\simeq\{-1,1\}$ given as the rightmost letter of the left-infinite word $U_{k}$ -- the kth term of underlying double-infinite comb VLMC defined in \cite{peggy}. The latter conditional probabilities are invariant by shifting the sequence of increments and thus $\alpha_k^\ttu$ and  $\alpha_k^\ttd$ stand respectively for the probabilities of changing direction after $k$ rises and $k$ descents. Therefore, we can write
\begin{equation}
\alpha_k^\ttu=\mathbb P\left(S_{n+1}-S_n=1
| X_n=\ttu,\,A_n=k\right)=1-\mathbb P\left(S_{n+1}-S_n=-1
| X_n=\ttu,\,A_n=k\right),
\end{equation} 
and vice versa replacing the direction $\ttu$ by $\ttd$ and inverting the jumps $1$ and $-1$.

Besides, in order to avoid trivial cases, we assume that $S$ can not be frozen in one of the two directions with a positive probability so that it makes infinitely many U-turns almost surely. We  deal throughout this paper with the conditional probability with respect to the event $(X_0,X_1)=(\ttu,\ttd)$. In other words, the initial time is suppose to be an up-to-down turn. Obviously, there is no loss of generality supposing this and the long time behaviour of $S$ is not affected as well. Therefore, we  assume the following
\begin{ass}[finiteness of the length of runs] \label{a0}
For any $\ell\in\{\ttu,\ttd\}$, 
\begin{equation}\label{a1}
\prod_{k=1}^{\infty}(1-\alpha_{k}^\ell)=0\;\Longleftrightarrow\;\left(\exists\,k\geq 1\;\mbox{ s.t. } \; \alpha_{k}^{\ell}=1\quad\mbox{or}\quad \sum_{k=1}^{\infty} \alpha_{k}^{\ell}=\infty\right).
\end{equation}
\end{ass}

In addition, we exclude implicitly the situation when both of the length of runs are almost surely constant. We denote by $\tau^\ttu_{n}$ and  $\tau^\ttd_{n}$ the length of the nth rise and descent respectively. The two sequences of {\it i.i.d.\@} random variables  $\{\tau_n^\ttd\}_{n\geq 1}$ and $\{\tau_n^\ttu\}_{n\geq 1}$ are independent -- a renewal property somehow --  and it is clear that their distribution tails and truncated means are given for any $\ell\in\{\ttu,\ttd\}$ and $t\geq 0$ by 
\begin{equation}
\label{def-tail}
\mathcal T_{\ell}(t):=\PP(\tau^\ell_{1} >t)=\prod_{k=1}^{\lfloor t\rfloor }(1-\alpha_{k}^\ell)\quad\mbox{and}\quad 
\Theta_{\ell}(t):=\mathbb E[\tau^{\ell}_{1}\wedge t]=\sum_{n=1}^{\lfloor t\rfloor }\prod_{k=1}^{n-1}(1-\alpha_{k}^{\ell}).	
\end{equation}
%With the latter notations, the expectations of the so-called running times or persistence times are given by $\Theta_{\ell}(\infty)$ -- the limits of $\Theta_{\ell}(t)$ as $t$ tends to infinity. 
We will also need to consider their  truncated second moments  defined by
\begin{equation}\label{truncatedmeansum}
{V}_{\ell}(t):=\mathbb E[(\tau^{\ell}_1)^{2}\mathds 1_{\{|\tau^{\ell}_1|\leq t\}}].
\end{equation}

Furthermore, in order to deal with a more tractable random walk with \emph{i.i.d.\@} increments, we introduce the underlying skeleton random walk $\{M_n\}_{n\geq 0}$  associated with the even U-turns -- the original walk observed at the random times of up-to-down turns. These observation times form also a  random walk (increasing) which is denoted by $\{T_{n}\}_{n\geq 0}$. Note that the expectation $\mathbf d_{\scr M}$ of an increment $Y_k:=\tau_k^\ttu-\tau_k^\ttd$ of $M$ is meaningful whenever (at least) one of the persistence times is integrable. By contrast,  the mean $\mathbf d_{\scr T}$ of a jump $\tau_{k}:=\tau_{k}^{\ttu}+\tau_{k}^{\ttd}$ related to $T$ is always defined.
Finally, we set when it makes sense, 
\begin{equation}
\label{drift-def2}
{\mathbf d}_{\scr S}:=
\frac{\mathbf d_{\scr M}}{\mathbf d_{\scr T}}
=\frac{\E[\tau^{\ttu}_{1}]-\E[\tau^{\ttd}_{1}]}{\E[\tau^{\ttu}_{1}]+\E[\tau^{\ttd}_{1}]},
\end{equation}
extended by continuity to $\pm 1$ if only one of the persistence times has a finite expectation.  This characteristic naturally arises in the recurrence and transience features and as an almost sure drift of $S$ as it is shown in \cite{PRWI}.

%%%%%%%%%%%%%%%%%%%%%%%%%%%%%%%%%%%%%%%%%%%%%%%%%%%%%%%%%%%%%%%%%%%%%%%%%%%%%%%%%%%%%%%%%%%%%%%%%%%%%%%%%%%%%%%%%%%%%%%%%%%%%%%%%%%%%%%%%%%%%%%%%%%%%%%%%%%%%%%%%%%%%%%%%%%%%%%%%%%%%
%%%%%%%%%%%%%%%%%%%%%%%%%%%%%%%%%%%%%%%%%%%%%%%%%%%%%%%%%%%%%%%%%%%%%%%%%%%%%%%%%%%%%%%%%%%%%%%%%%%%%%%%%%%%%%%%%%%%%%%%%%%%%%%%%%%%%%%%%%%%%%%%%%%%%%%%%%%%%%%%%%%%%%%%%%%%%%%%%%%%%%%%%%%%%%%%%%%%%%%%%%%%%%%%%%%%%%%%%%%%%%%%%%%%%%%%%%%%%%%%%%%%%%%%%%%%%%%%%%%%%%%%%%%%%%%%%%%%%%%%%%%%%%%%%%%%%%%%%%%%%%%%%%%%%%%%%%%%%%%%%%%%%%%%%%%%%%%%%%%

\subsection{Mean drift  and stability assumption}

\label{Meandrift}

To go further and introduce the suitable centering term  $\mathbf m_{\scr S}$ in the scaling limits (\ref{scalingforme}) we need to consider, when it exists, the tail balance parameter defined by
\begin{equation}\label{balance0}
{\mathbf b}_{\scr S}:=\lim_{t\to\infty}\frac{\tail_{\ttu}(t)-\tail_{\ttd}(t)}{\tail_{\ttu}(t)+\tail_{\ttd}(t)},
\end{equation} 
and set 
\begin{equation}\label{meandriftass}
{\mathbf m}_{\scr S}:=\left\{\begin{array}{ll}
\mathbf b_{\scr S},  & \mbox{when $\tau^{\ttu}_1$ and $\tau^{\ttd}_1$ are both not integrable,}\\
\mathbf d_{\scr S},  & \mbox{otherwise.}  
\end{array}\right.	
\end{equation}	
In the light of the $\rm L^1$-convergence in (\ref{meandrift1}) below, this term is naturally called the mean drift of $S$. Note also that it generalizes $\mathbf d_{\scr S}$ since, when it is well defined, 
\begin{equation}\label{meandriftass2}
{\mathbf m}_{\scr S}=\lim_{t\to\infty}\frac{\Theta_{\ttu}(t)-\Theta_{\ttd}(t)}{\Theta_{\ttu}(t)+\Theta_{\ttd}(t)}.
\end{equation}	

A Strong Law of Large Number (SLLN) is established in \cite{peggy} as well as a (non-functional) CLT under some strong moment conditions on the running times $\tau^{\ell}_{1}$. Here the assumptions are drastically weakened. More precisely, our main hypothesis to get functional invariance principles -- assumed throughout the article unless otherwise stated -- is the following

\begin{ass}[$\alpha$-stability]\label{assglobal} The mean drift $\mathbf m_{\scr S}$ is well defined and  not extremal, that is $\mathbf m_{\scr S}\in(-1,1)$. Moreover, there exists $\alpha\in(0,2]$ such that
\begin{equation}\label{cond10}
\tau^{\mathtt c}_{1}:=(1-\mathbf m_{\scr S})\tau^{\ttu}_{1}-(1+\mathbf m_{\scr S})\tau^{\ttd}_{1}\in{\rm D}(\alpha),	
\end{equation}
{\it i.e.\@} $\tau^{\mathtt c}_1$ belongs to the domain of attraction of an $\alpha$-stable distribution.
\end{ass}

Note that $\tau^{\mathtt c}_{1}$ is a centered random variable when $\mathbf m_{\scr S}=\mathbf d_{\scr S}$. When $\mathbf m_{\scr S}=\mathbf b_{\scr S}$, we will see that it is always -- in some sense -- well  balanced. 
Obviously, the $\alpha$-stable distribution in the latter hypothesis is supposed to be non-degenerate. Closely related to stable distributions and their domains of attraction are the notions of regularly varying functions, infinitely divisible distributions and L\'{e}vy processes. We refer, for instance, to \cite{BGT,Sato,Applebaum,Skorohod,Feller,WW} for a general panorama.

%%%%%%%%%%%%%%%%%%%%%%%%%%%%%%%%%%%%%%%%%%%%%%%%%%%%%%%%%%%%%%%%%%%%%%%%%%%%%%%%%%%%%%%%%%%%%%%%%%%%%%%%%%%%%%%%%%%%%%%%%%%%%%%%%%%%%%%%%%%%%%%%%%%%%%%%%%%%%%%%%%%%%%%%%%%%%%%%%%%%%
%%%%%%%%%%%%%%%%%%%%%%%%%%%%%%%%%%%%%%%%%%%%%%%%%%%%%%%%%%%%%%%%%%%%%%%%%%%%%%%%%%%%%%%%%%%%%%%%%%%%%%%%%%%%%%%%%%%%%%%%%%%%%%%%%%%%%%%%%%%%%%%%%%%%%%%%%%%%%%%%%%%%%%%%%%%%%%%%%%%%%%%%%%%%%%%%%%%%%%%%%%%%%%%%%%%%%%%%%%%%%%%%%%%%%%%%%%%%%%%%%%%%%%%%%%%%%%%%%%%%%%%%%%%%%%%%%%%%%%%%%%%%%%%%%%%%%%%%%%%%%%%%%%%%%%%%%%%%%%%%%%%%%%%%%%%%%%%%%%%

\subsection{Normalizing functions}

\label{normalizing}

For the statement of the functional invariance principle, it is necessary to define the appropriate sub-linear normalizing function $\lambda(u)$ in the functional convergences  (\ref{scalingforme}). To this end, we introduce the non-negative and non-decreasing functions $\Sigma(t)$ and $\Theta(t)$ respectively given by
\begin{equation}\label{sigma0} 
\Sigma(t)^{2}:=(1-\mathbf m_{\scr S}){V}_{\ttu}\left(\frac{t}{1-\mathbf m_{\scr S}}\right)+ 
(1+\mathbf m_{\scr S}){V}_{\ttd}\left(\frac{t}{1+\mathbf m_{\scr S}}\right)
\mbox{and}\quad \Theta(t):=\Theta_{\ttu}(t)+\Theta_{\ttd}(t).
\end{equation}
Then we shall see that a relevant choice can be  given  by setting $\lambda(u):=a\circ s(u)$ with
\begin{equation}\label{normalizing0}
a(u):=\inf\left\{ t > 0 : \frac{t^{2}}{\Sigma(t)^{2}}\geq u\right\}\quad\mbox{and}\quad s(u):=\inf \left\{t>0 : \Theta\circ a(t)\,t\geq u \right\}.	
\end{equation}

\section{Statements of the results}

\setcounter{equation}{0}

\label{sectionresults}

We establish -- under the $\alpha$-stability Assumption \ref{assglobal} -- a general functional invariance principle, stated in its compact form in Theorem \ref{thmgene0} below. In this one, the Skorohod space of all right-continuous functions $\omega : [0,\infty)\longrightarrow \mathbb R$ having left limits ({\it c\`{a}dl\`{a}g}) is endowed with the ${\rm M}_{1}$-topology (making it a Polish space) and the convergence in distribution with respect to the related Borel $\sigma$-fields is denoted by ${\rm M_{1}}$. For more details, we refer especially to  \cite{WW}.  

\begin{theo}\label{thmgene0} There exists a non-trivial {\it c\`{a}dl\`{a}g\@} process $Z_{\alpha}$ such that 
\begin{equation}\label{scalingform}
\left\{\frac{S_{\lfloor u\,t\rfloor}-\mathbf m_{\scr S}\,u\,t}{\lambda(u)}\right\}_{t\geq 0}\quad\mbox{and}\quad \left\{\frac{S_{u\,t}-\mathbf m_{\scr S}\,u\,t}{\lambda(u)}\right\}_{t\geq 0}\;\xRightarrow[u\to\infty]{{\rm M}_{1}}\; \{Z_{\alpha}(t)\}_{t\geq 0}.
\end{equation}
\end{theo}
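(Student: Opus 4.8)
The plan is to reduce the problem to a classical stable functional limit theorem for sums of \emph{i.i.d.\@} increments, composed with a renewal time change, and then to transport the convergence through the $\mathrm{M}_1$ topology. The starting point is the algebraic observation that the choice of $\mathbf m_{\scr S}$ is exactly the one turning the centered skeleton into a genuine random walk: sampling $S$ at the successive up-to-down turns $T_n=\sum_{k=1}^n(\tau_k^{\ttu}+\tau_k^{\ttd})$, the observed position is the skeleton $M_n$, and
\[
M_n-\mathbf m_{\scr S}\,T_n=\sum_{k=1}^n\big((1-\mathbf m_{\scr S})\tau_k^{\ttu}-(1+\mathbf m_{\scr S})\tau_k^{\ttd}\big)=\sum_{k=1}^n\tau_k^{\mathtt c}.
\]
This is a purely algebraic identity, valid whether or not the persistence times are integrable, and it shows that at the random clock $T$ the centered process is literally a sum of \emph{i.i.d.\@} copies of $\tau^{\mathtt c}$. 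Assumption \ref{assglobal} guarantees that this increment is appropriately centered (when $\mathbf m_{\scr S}=\mathbf d_{\scr S}$) or balanced (when $\mathbf m_{\scr S}=\mathbf b_{\scr S}$), so that $\tau^{\mathtt c}\in\mathrm{D}(\alpha)$.

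First I would establish the joint functional limit of the bivariate walk $\big(\sum_{k\leq n}\tau_k^{\mathtt c},\,\sum_{k\leq n}\tau_k\big)$, suitably normalized. Since both coordinates are built from the same pair $(\tau^{\ttu},\tau^{\ttd})$, they are dependent and their large values are triggered by the same long runs; the classical theory of domains of attraction yields convergence of this two-dimensional walk toward a bivariate L\'evy process $(\mathcal X,\mathcal V)$ whose second coordinate $\mathcal V$ is non-decreasing — either the deterministic drift $t\mapsto\mathbf d_{\scr T}\,t$ when $\tau^{\ttu}_1+\tau^{\ttd}_1$ is integrable, or an $\alpha$-stable subordinator otherwise. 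The normalizations $a$ and $s$ of \eqref{normalizing0}, obtained by inverting the variance-type functional $\Sigma$ and the truncated-mean functional $\Theta$, are designed precisely to place the spatial and the temporal scales on the common footing this joint convergence requires, the use of the truncated moments being what allows the critical exponents (in particular the Cauchy case $\alpha=1$) to be handled through the generalized drift \eqref{meandriftass2}.

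Next I would invert the clock: writing $N_t$ for the number of completed cycles by real time $t$, the real-time-indexed centered process is the composition $M_{N_t}-\mathbf m_{\scr S}\,t$, which after rescaling is $\mathcal X\circ\mathcal V^{-1}$ up to negligible terms. The convergence would then follow from a continuous-mapping argument for the time-change map in the Skorohod space. Three sources of discrepancy must be shown to be asymptotically negligible, or to be absorbed by the graph completion used in $\mathrm{M}_1$: the within-cycle displacement $S_t-S_{T_{N_t}}$, the centering mismatch $\mathbf m_{\scr S}(t-T_{N_t})$, both controlled by a single current persistence time, and the gap between the floor-indexed and the linearly interpolated versions, which is at most a unit jump and hence $o(\lambda(u))$ since $\lambda(u)\to\infty$; this last point delivers the two convergences in \eqref{scalingform} simultaneously. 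Non-triviality of $Z_\alpha$ is inherited from the non-degeneracy of the $\alpha$-stable law in Assumption \ref{assglobal}.

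The hard part will be the composition in the $\mathrm{M}_1$ topology. During a long run the walk moves \emph{monotonically} across what becomes, after rescaling, a macroscopic jump, so the limit cannot hold in $\mathrm{J}_1$ but does hold in $\mathrm{M}_1$, whose completed-graph parametrization matches the straight segment traced during the excursion — this is exactly why $\mathrm{M}_1$ is imposed from the outset. The genuine difficulty is that a single large persistence time produces a \emph{simultaneous} jump in both $\mathcal X$ and $\mathcal V$, so the spatial and temporal jumps are perfectly coupled; the map $\mathcal X\circ\mathcal V^{-1}$ is not continuous in general, and one must exploit this coupling — the coincidence of the jump epochs and the sign of each jump — to push the joint convergence through the composition without losing mass or creating spurious oscillations. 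This same coupling is what forces the memory to survive in the limit when the clock is non-integrable, producing the announced non-Markovian arcsine Lamperti anomalous diffusions and, in the integrable regime, collapsing to a plain stable L\'evy process.
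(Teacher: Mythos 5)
Your overall architecture is the paper's: Section \ref{sectionl\'{e}vy} runs exactly your three steps for $\alpha\in[1,2]$ (the walk $C_n=M_n-\mathbf m_{\scr S}\,T_n=\sum_{k\leq n}\tau_k^{\mathtt c}$, a law of large numbers for the clock, negligibility of the residual, then continuous mapping), and Section \ref{sectionanomalous} invokes the coupled CTRW limit theorems for $\alpha\in(0,1)$. The genuine gap is in your treatment of the within-cycle displacement $S_t-S_{T_{N_t}}$, which you propose to show ``asymptotically negligible or absorbed by the graph completion''. When $\alpha\in(0,1)$ the normalization satisfies $\lambda(u)\sim(2-\alpha)(1-\alpha)u/\alpha$, a single persistence time is of the same order as $ut$ itself, and this term is of first order. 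Nor can it be absorbed: the rescaled walk $t\mapsto S_{u\,t}/u$ is $1$-Lipschitz, and a sequence of $1$-Lipschitz paths cannot converge in ${\rm M}_{1}$ to a discontinuous limit (traversing a jump of macroscopic size inside a time window of vanishing width is impossible for a $1$-Lipschitz function), so the limit cannot be the lagging process $X_{\alpha}=[S_{\alpha}^{-}\circ N_{\alpha}]^{+}$ --- your $\mathcal X\circ\mathcal V^{-1}$ --- which is piecewise constant with macroscopic jumps and sits at positive ${\rm M}_1$ distance from its linear interpolation. The within-cycle piece must be carried along jointly with the excursion structure; it is what produces the interpolation weight $(t-G_{\alpha}(t))/(H_{\alpha}(t)-G_{\alpha}(t))$ in (\ref{anomalousdiff}) and turns the limit into the continuous process $\mathcal S_{\alpha}$. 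This is precisely the correction the paper makes, in Section \ref{sectCTRW}, to \cite{Scalingdirectionaly1998,Scalingdirectionaly2014}; your reduction reproduces their error.

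A second, smaller gap is the critical case $\alpha=1$ with integrable persistence times. You assert that $\tau_1^{\mathtt c}$ is ``appropriately centered'' because $\mathbf m_{\scr S}=\mathbf d_{\scr S}$, but for index $1$ a centered variable whose tails are not balanced does not admit $n\,\mathbb E[\tau_1^{\mathtt c}]=0$ as a valid centering sequence: the discrepancy with the truncated-mean centering is of the order of (and can even dominate) the normalization $a(n)$. Since the tail-balance parameter of $\tau_1^{\mathtt c}$ is given by (\ref{balancetauc00}) and is nonzero whenever $\mathbf d_{\scr S}\neq\mathbf b_{\scr S}$, the paper must replace the decomposition by one built on the well-balanced variable $\tau_{1}^{\mathtt o}=(1-\mathbf b_{\scr S})\tau_{1}^{\ttu}-(1+\mathbf b_{\scr S})\tau_{1}^{\ttd}$ together with the compensating drift $(\mathbf d_{\scr S}-\mathbf b_{\scr S})\,u\,t$, and justify the admissible centerings via de Haan theory (Lemma \ref{lemmecenter}). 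Your sketch, as written, stalls at your Step 1 in the Cauchy case.
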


\begin{rem}
We point out that we make the effort to give an unified functional convergence. To enforce the scaling limit, we do not have to know the index of stability $\alpha$ since computing the mean drift $\mathbf m_{\scr S}$ and the normalization function $\lambda(u)$ only involve to observe the truncated mean and second moment of the running times. This could have some statistical interests.
\end{rem}

Furthermore,  it turns out that there are mainly four situations  according to the position of the index of stability $\alpha\in(0,2]$ with respect to  the partition
$(0,1)\sqcup\{1\} \sqcup (1,2)\sqcup \{2\}$. More precisely, reading the four latter intervals from the right to the left,  the limit stochastic process $Z_{\alpha}$ can be equal to
\begin{enumerate}
\item[1)] $B$ -- a brownian motion;
\item[2)] $S_{\alpha,\beta}$ -- a strictly $\alpha$-stable L\'{e}vy process with skewness parameter $\beta$;
\item[3)] $\mathfrak C$ -- a symmetric Cauchy process;
\item[4)] ${\mathcal S}_{\alpha}$ -- an arcsine Lamperti anomalous diffusion. 
\end{enumerate}

The first three situations cover all classical strictly stable L\'{e}vy processes having infinite variations and we will specify later what is meant by anomalous diffusion and the arcsine Lamperti terminology, which is undoubtedly the most fruitful situation. Note that the authors in \cite{peggy} show the convergence of some rescaling  toward a kind of non-symmetric generalized telegraph process.  In this paper the limit processes are no longer generalizations of the telegraph process since the probabilities of changing directions are fixed. Here actual scaling limits are investigated. Theorem \ref{thmgene0} is divided and completed below according to the four latter possible situations. In particular, we consider stronger convergence in distribution: the ${\rm J}_{1}$-convergence and  its restriction $\mathcal C$ to the Wiener space of all continuous functions (see \cite{Billcvg} for instance).

%%%%%%%%%%%%%%%%%%%%%%%%%%%%%%%%%%%%%%%%%%%%%%%%%%%%%%%%%%%%%%%%%%%%%%%%%%%
%%%%%%%%%%%%%%%%%%%%%%%%%%%%%%%%%%%%%%%%%%%%%%%%%%%%%%%%%%%%%%%%%%%%%%%%%%%%%%%%%%%%%%%%%%%%%%%%%%%%%%%%%%%%%%%%%%%%%%%%%%%%%%%%%%%%%%%%%%%%%%%%%%%%%%%%%%%%%%%%%%%%%%%%%%%%%%%%%%%%%%%%%%%%%%%%%%%%%%%%%%%%%%%%%%%%%%%%%%%%%%%%%%%%%%%%%%%%%%%%%%%%%%%%%%%%%%%%%%%%%%%%%%%%%%%%%%%%%%%%%%%%%%%%%%%%%%%%%%%%%%%%%%%%%%%%%%%%%%%%%%%%%%%%%%%%%%%%%%%
%%%%%%%%%%%%%%%%%%%%%%%%%%%%%%%%%%%%%%%%%%%%%%%%%%%%%%%%%%%%%%%%%%%%%%%%%%%
%%%%%%%%%%%%%%%%%%%%%%%%%%%%%%%%%%%%%%%%%%%%%%%%%%%%%%%%%%%%%%%%%%%%%%%%%%%%%%%%%%%%%%%%%%%%%%%%%%%%%%%%%%%%%%%%%%%%%%%%%%%%%%%%%%%%%%%%%%%%%%%%%%%%%%%%%%%%%%%%%%%%%%%%%%%%%%%%%%%%%%%%%%%%%%%%%%%%%%%%%%%%%%%%%%%%%%%%%%%%%%%%%%%%%%%%%%%%%%%%%%%%%%%%%%%%%%%%%%%%%%%%%%%%%%%%%%%%%%%%%%%%%%%%%%%%%%%%%%%%%%%%%%%%%%%%%%%%%%%%%%%%%%%%%%%%%%%%%%%

\subsection{Classical L\'{e}vy situation}

\label{brownian}

Here we deepen the case when the limit process is a strictly stable L\'{e}vy motion of infinite variation. We shall prove -- in the generic situation $\alpha\in(1,2)$ -- that up to some scale parameter it is an $\alpha$-stable L\'{e}vy process $S_{\alpha,\beta}$ with  skewness parameter
\begin{equation}\label{skewnessexpress}
\beta=\frac{(1-\mathbf m_{\scr S})^{\alpha}(1+\mathbf b_{\scr S})-(1+\mathbf m_{\scr S})^{\alpha}(1-\mathbf b_{\scr S})}{(1-\mathbf m_{\scr S})^{\alpha}(1+\mathbf b_{\scr S})+(1+\mathbf m_{\scr S})^{\alpha}(1-\mathbf b_{\scr S})}.
\end{equation}
More precisely, we shall see that its symbol $\log\mathbb E[e^{i u S_{\alpha,\beta}(1)}]$ is equal to
\begin{equation}\label{symbollevy0}
-\frac{(2-\alpha)\Gamma(2-\alpha)}{\alpha}\frac{\sin\left(\frac{\pi}{2}(\alpha-1)\right)}{\alpha-1} \,\bigg[1-i\beta\tan\left(\frac{\pi\alpha}{2}\right)\bigg] \,|u|^{\alpha},
\end{equation}
where $\Gamma$ is the Gamma function. In particular, its L\'{e}vy jump measure  is given by  
\begin{equation}\label{jumpmeasure00}
\left[\left(\frac{1-\beta}{2}\right)\mathds 1_{\{x<0\}}+\left(\frac{1+\beta}{2}\right)\mathds 1_{\{x>0\}}\right]  \frac{(2-\alpha)}{|x|^{\alpha+1}}\,dx.
\end{equation}

Regarding the  case $\alpha=2$, the log-characteristic function of the limit process is still given by the right hand side of (\ref{symbollevy0}) -- whatever the value of $\beta$ is -- and it is nothing but a standard Brownian motion denoted by $B$. Concerning the last situation $\alpha=1$, the symbol is given by the right hand side of (\ref{symbollevy0}) with a null skewness parameter $\beta=0$. In other words, 
it is a symmetric Cauchy  process denoted by $\mathfrak C$ and characterized by its marginal distribution 
\begin{equation}
\mathfrak C(1)\sim \frac{1}{2}\frac{1}{(\pi/2)^{2}+x^{2}}\,dx.	
\end{equation}

As a matter of fact, the skewness parameter (\ref{skewnessexpress}) is  associated with the random variable $\tau_1^{\mathtt c}$ introduced in (\ref{cond10}). This expression will be explained in the last section  where equivalent formulations of Assumption \ref{assglobal} can be deduced from Lemma \ref{lemmeequi}. Besides, it also comes from this lemma -- together with Lemma \ref{lemmebound} and classical results on regular variations in \cite{BGT} for instance -- that 
$\mathbf d_{\scr T}<\infty$ and $\mathbf m_{\scr S}=\mathbf d_{\scr S}$ when  $\alpha\in(1,2)$ but also that there exists   a slowly varying function $\Xi_{\alpha}(u)$ such that   
\begin{equation}
a(u)\;\underset{u\to\infty}{\sim}\;\Xi_{\alpha}(u)\, u^{1/\alpha}\quad\mbox{and}\quad s(u)\;\underset{u\to\infty}{\sim}\;\frac{u}{\mathbf d_{\scr T}},
\end{equation}
In particular, the random variable $\tau_1^{\mathtt c}$ is centered. As far as $\alpha=2$, the latter considerations also hold but with an ultimately non-decreasing slowly varying function $\Xi_2(u)$ since it can be chosen as $\Sigma\circ a(u)$. 

When $\alpha=1$, it is possible for both of the running times $\tau_1^\ttu$ and $\tau_1^\ttd$ to be integrable or not. In the first situation, the latter estimations are still effective. In the second one, the random variable $\tau_{1}^{\mathtt c}$ is no longer centered but it is well balanced -- the skewness parameter $\beta$ is equal to zero -- in the sight of equality (\ref{skewnessexpress}) since $\mathbf m_{\scr S}=\mathbf b_{\scr S}$. Anyway, when $\alpha=1$, we can formulate  the following

\begin{lem}\label{cauchylemme} When $\alpha=1$ there exist two  slowly varying  functions $\Xi_{1}(u)$ and $D(u)$, the latter being  ultimately non-decreasing, such that
\begin{equation}\label{Du}
a(u)\;\underset{u\to\infty}{\sim}\; \Xi_{1}(u)\, u\quad\mbox{and}\quad s(u)\;\underset{u\to\infty}{\sim}\;\frac{u}{D(u)},\quad\mbox{with}\quad \lim_{u\to\infty}\frac{D(u)}{\Xi_{1}\circ s(u)}=\infty.
\end{equation}
\end{lem}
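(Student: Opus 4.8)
The plan is to reduce everything to Karamata- and de Haan-type asymptotics for the generalised inverses hidden in (\ref{sigma0})--(\ref{normalizing0}); the whole phenomenon specific to $\alpha=1$ is that the truncated mean $\Theta$ and the quantity $\Sigma(t)^{2}/t$ are of the \emph{same} slowly varying nature, yet $\Theta$ strictly dominates.

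First I would record the consequences of Assumption \ref{assglobal} when $\alpha=1$. By the characterisation of domains of attraction through truncated second moments, $\tau_1^{\mathtt c}\in{\rm D}(1)$ is equivalent to the truncated second moment of $\tau_1^{\mathtt c}$ being regularly varying of index $2-\alpha=1$; via Lemma \ref{lemmeequi} and Lemma \ref{lemmebound} this transfers to $\Sigma(t)^{2}$ in (\ref{sigma0}), which is therefore regularly varying of index $1$. Writing $\Sigma(t)^{2}=t\,\sigma(t)$ with $\sigma$ slowly varying, the competitor $t^{2}/\Sigma(t)^{2}=t/\sigma(t)$ is regularly varying of index $1$, so its generalised inverse $a$ in (\ref{normalizing0}) is regularly varying of index $1$ as well: $a(u)\sim\Xi_{1}(u)\,u$ for some slowly varying $\Xi_{1}$. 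Evaluating the defining relation of $a$ at $t=a(u)$ moreover yields the identification $\Xi_{1}(u)\sim\sigma\circ a(u)$, which I will use at the very end.

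Next, the same hypothesis makes the combined tail $\mathcal T_{\ttu}+\mathcal T_{\ttd}$ regularly varying of index $-1$; since $\Theta_{\ell}(t)=\int_0^t\mathcal T_{\ell}(s)\,ds$, Karamata's theorem in the boundary case $\alpha=1$ gives that $\Theta=\Theta_{\ttu}+\Theta_{\ttd}$ is slowly varying. Composing with the regularly varying $a$ of index $1$ keeps $\Theta\circ a$ slowly varying, hence $t\mapsto t\,\Theta(a(t))$ is regularly varying of index $1$; inverting it and setting $D(u):=\Theta\circ a\circ s(u)$, the relation $s\,\Theta(a(s))=u$ at $s=s(u)$ reads exactly $s(u)\sim u/D(u)$. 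Because $\Theta$, $a$ and $s$ are all nondecreasing, $D=\Theta\circ a\circ s$ is nondecreasing, and being the composition of a slowly varying function with a regularly varying one it is slowly varying; this already delivers the two stated equivalents in (\ref{Du}) together with the monotonicity of $D$.

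It remains to establish the separation $\lim_{u\to\infty}D(u)/\Xi_{1}\circ s(u)=\infty$, which is the heart of the matter. Using $\Xi_{1}\sim\sigma\circ a$ at the argument $s(u)$ and the definition of $D$, the ratio equals $\Theta(t)/\sigma(t)$ evaluated at $t=a\circ s(u)\to\infty$, so the statement collapses to the single asymptotic
\begin{equation}
\frac{\Theta(t)}{\Sigma(t)^{2}/t}\;\underset{t\to\infty}{\longrightarrow}\;\infty.
\end{equation}
This is the genuine signature of the critical Cauchy regime. Writing $\bar F(t):=\mathcal T_{\ttu}(t)+\mathcal T_{\ttd}(t)\sim t^{-1}L(t)$ with $L$ slowly varying, Karamata gives $\int_0^t s\,\bar F(s)\,ds\sim tL(t)$ and $t^{2}\bar F(t)\sim tL(t)$, whence $\Sigma(t)^{2}/t\sim c\,L(t)$ for a positive constant $c$; on the other hand $\Theta(t)\sim\int^{t}s^{-1}L(s)\,ds$ lies in the de Haan class and satisfies $L(t)/\Theta(t)\to 0$, i.e. $\Theta(t)\gg L(t)$. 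Dividing yields the claim. The main obstacle is carrying this comparison through the $\ttu$/$\ttd$ decomposition: one must invoke the tail-balance built into $\tau_1^{\mathtt c}\in{\rm D}(1)$ (equivalently Lemma \ref{lemmeequi}) to guarantee that the slowly varying parts governing $\Sigma(t)^{2}/t$ and $\Theta(t)$ are comparable, and to treat uniformly the two admissible sub-cases — both running times integrable or both not — which are nonetheless subsumed by the same inequality $\Theta(t)\gg\Sigma(t)^{2}/t$ (when the means are finite, $\Theta(t)\to\mathbf{d}_{\scr T}$ while $L(t)\to 0$).
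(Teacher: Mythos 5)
Your proof is correct and follows essentially the same route as the paper's: identify $\Xi_{1}$ from the defining relation of $a$ (your $\sigma\circ a$ agrees, up to a constant, with the paper's $a(u)\,\tail\circ a(u)$ via $\Sigma(t)^{2}\asymp t^{2}\tail(t)$), set $D:=\Theta\circ a\circ s$, and reduce the separation to the Karamata-type estimate $t\,\tail(t)=o(\Theta(t))$, which is exactly the citation of \cite[Proposition 1.5.9a]{BGT} in the paper. Your version merely spells out the intermediate regular-variation bookkeeping (and the finite-mean sub-case) in more detail.
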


The proof of these estimates -- especially the right hand side of (\ref{Du}) which is not a direct consequence of regular variations -- is postponed to the end of Section \ref{sectionl\'{e}vy}. 

\begin{theo}[classical L\'{e}vy situation] 
\label{scalingbrownian} Suppose $\alpha\in[1,2]$, then the scaling limits in (\ref{scalingform}) can be rewritten -- in a stronger way -- as follows.
\begin{enumerate}
\item[1)] If $\alpha=2$ -- the Gaussian case -- then    
\begin{multline}\label{scalingBM2000}
\left\{{\frac{\sqrt{{\mathbf d}_{\scr T}\,u}}{\Xi_{2}(u)}}
\left(\frac{S_{\lfloor u\,t\rfloor}}{u}-{{\mathbf d}_{\scr S}}\, t\right)\right\}_{t\geq 0} 
\;\xRightarrow[u\to\infty]{\rm J_{1}}\;
\{B(t)\}_{t\geq 0},\\
\mbox{and}\quad \left\{{\frac{\sqrt{{\mathbf d}_{\scr T}}\sqrt{u}}{\Xi_{2}(u)}}
\left(\frac{S_{u\,t}}{u}-{{\mathbf d}_{\scr S}}\, t\right)\right\}_{t\geq 0} 
\;\xRightarrow[u\to\infty]{\mathcal C}\;
\{B(t)\}_{t\geq 0}.	
\end{multline}
\item[2)] If $\alpha\in(1,2)$ -- the generic case -- then
\begin{equation}\label{scalinggeneric}
\left\{
\frac{\mathbf d_{\scr T}^{{1}/{\alpha}}\,u^{1-{1}/{\alpha}}}{\Xi_{\alpha}(u)}
\left(\frac{S_{\lfloor u \,t\rfloor}}{u}-{\mathbf d}_{\scr S}\,t\right)\right\}_{t\geq 0} 
\;\xRightarrow[u\to\infty]{\rm J_{1}}\; 
\{ S_{\alpha,\beta}(t) \}_{t\geq 0}.
\end{equation}
\item[3)] If $\alpha=1$ -- the Cauchy case -- then
\begin{equation}\label{scalingcauchy2}
\left\{
\frac{D(u)}{\Xi_{1}\circ s(u)}
\left(\frac{S_{\lfloor u \,t\rfloor}}{u}-{\mathbf m}_{\scr S}\,t\right)\right\}_{t\geq 0} 
\;\xRightarrow[u\to\infty]{\rm J_{1}}\; 
\{ 
\mathfrak C(t) \}_{t\geq 0}.
\end{equation}
\end{enumerate}
\end{theo}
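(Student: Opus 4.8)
The plan is to realize $S$ as a time change of the i.i.d.\ skeleton random walk $M$ and to pull back a classical stable functional limit theorem through that time change. Recall that $M_{n}=\sum_{k=1}^{n}Y_{k}$ with $Y_{k}=\tau^{\ttu}_{k}-\tau^{\ttd}_{k}$ records the position $S_{T_{n}}$ at the $n$-th up-to-down turn, occurring at $T_{n}=\sum_{k=1}^{n}\tau_{k}$ with $\tau_{k}=\tau^{\ttu}_{k}+\tau^{\ttd}_{k}$, and that the centred skeleton increment is \emph{exactly} $\tau^{\mathtt c}_{k}=Y_{k}-\mathbf m_{\scr S}\,\tau_{k}$ of (\ref{cond10}). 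Writing $N_{n}:=\#\{k\ge 1:T_{k}\le n\}$ for the number of completed excursions by time $n$, I would start from the exact decomposition
\[
S_{n}-\mathbf m_{\scr S}\,n=\sum_{k=1}^{N_{n}}\tau^{\mathtt c}_{k}+R_{n},\qquad R_{n}:=\bigl(S_{n}-M_{N_{n}}\bigr)-\mathbf m_{\scr S}\,(n-T_{N_{n}}),
\]
the first term being $M_{N_{n}}-\mathbf m_{\scr S}T_{N_{n}}$ and $R_{n}$ the centred content of the current, incomplete excursion.

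First I would treat the skeleton. Under Assumption \ref{assglobal}, $\tau^{\mathtt c}_{1}\in{\rm D}(\alpha)$, and by the equivalent formulations of Lemmas \ref{lemmeequi}--\ref{lemmebound} together with Karamata's theory the function $a(\cdot)$ of (\ref{normalizing0}) is, up to asymptotic equivalence, the classical norming sequence for which $a(n)^{-1}\sum_{k\le n}\tau^{\mathtt c}_{k}$ has a strictly $\alpha$-stable limit. Invoking the functional stable invariance principle in the domain of attraction, I get on $D[0,\infty)$ the convergence of $\{a(n)^{-1}\sum_{k=1}^{\lfloor nt\rfloor}\tau^{\mathtt c}_{k}\}_{t\ge 0}$ to $\{Z_{\alpha}(t)\}_{t\ge 0}$, in the uniform ($\mathcal C$) topology for $\alpha=2$ and in $\mathrm J_{1}$ for $\alpha\in[1,2)$. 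The limit is identified from the two-sided tail of $\tau^{\mathtt c}_{1}$: its positive and negative tails are governed by $(1-\mathbf m_{\scr S})\tau^{\ttu}_{1}$ and $(1+\mathbf m_{\scr S})\tau^{\ttd}_{1}$, so combining $\tail_{\ttu},\tail_{\ttd}$ with the balance parameter $\mathbf b_{\scr S}$ of (\ref{balance0}) produces the skewness (\ref{skewnessexpress}) and the symbol (\ref{symbollevy0}); when $\alpha=1$ the balance built into $\mathbf m_{\scr S}=\mathbf b_{\scr S}$ forces $\beta=0$ and the symmetric Cauchy process $\mathfrak C$.

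Next comes the renewal time change. When $\mathbf d_{\scr T}<\infty$ the SLLN for $T$ gives $T_{n}/n\to\mathbf d_{\scr T}$ and hence $N_{\lfloor ut\rfloor}/u\to t/\mathbf d_{\scr T}$ uniformly on compacts in probability; with $s(u)\sim u/\mathbf d_{\scr T}$ (respectively $s(u)\sim u/D(u)$ in the non-integrable Cauchy regime of Lemma \ref{cauchylemme}) this reads $N_{\lfloor ut\rfloor}/s(u)\to t$. Composing the skeleton limit, taken along $n=s(u)$, with this asymptotically-identity time change via the random-time-change theorem for $\mathrm J_{1}$/uniform convergence yields convergence of $\{a(s(u))^{-1}\sum_{k=1}^{N_{\lfloor ut\rfloor}}\tau^{\mathtt c}_{k}\}_{t\ge 0}$ to $\{Z_{\alpha}(t)\}_{t\ge 0}$; since $\lambda(u)=a\circ s(u)$, inserting the equivalents $a(u)\sim\Xi_{\alpha}(u)u^{1/\alpha}$ and $s(u)\sim u/\mathbf d_{\scr T}$ (and, for $\alpha=1$, the ratio $D(u)/\Xi_{1}\!\circ s(u)$ of Lemma \ref{cauchylemme}) turns the prefactor into the exact normalisations of (\ref{scalingBM2000})--(\ref{scalingcauchy2}).

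Finally I would show that $R_{\lfloor ut\rfloor}/\lambda(u)$ does not disturb the $\mathrm J_{1}$ limit, and this is where I expect the main obstacle. The residual is bounded by the centred content of a single excursion, hence large only when the ongoing persistence time is of order $a(s(u))$; since $\tau^{\ttu}_{1},\tau^{\ttd}_{1}$ are independent with tails of order $u^{-1}$ at that scale, the chance that both are simultaneously macroscopic in the same excursion is $O(u^{-1})$ after summing the $O(u)$ excursions. Thus, with probability tending to one, every macroscopic excursion is driven by a single one-sided run, during which the centred step path is \emph{monotone}; comparing the càdlàg step process $\lambda(u)^{-1}(S_{\lfloor u\cdot\rfloor}-\mathbf m_{\scr S}\lfloor u\cdot\rfloor)$ with the skeleton-based process above, the two agree up to $o(\lambda(u))$ off these excursions and differ only by a monotone ramp of vanishing macroscopic duration at each of them, which a single $\mathrm J_{1}$ time change aligns. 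The delicate points are ruling out the $\mathrm J_{1}$-incompatible two-sided spikes and controlling the last incomplete excursion near each fixed $t$; note this is exactly why the $\mathrm J_{1}$ statements concern the piecewise-constant $S_{\lfloor u\cdot\rfloor}$, the continuous interpolation $S_{u\cdot}$ being unable to converge in $\mathrm J_{1}$ to a jump process (whence the $\mathrm M_{1}$ record of Theorem \ref{thmgene0}). For $\alpha=2$ there are no macroscopic jumps, $R$ is uniformly $o(\lambda(u))$, and the argument upgrades the convergence to the $\mathcal C$ topology for the interpolated process.
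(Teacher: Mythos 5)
Your decomposition $S_{\lfloor ut\rfloor}-\mathbf m_{\scr S}ut=C_{N(ut)}+R(ut)$ with $C$ the walk of steps $\tau^{\mathtt c}_k=Y_k-\mathbf m_{\scr S}\tau_k$, followed by the three steps (stable FCLT for the skeleton with norming $a$, time change $N(ut)/s(u)\to t$, control of the residual) is exactly the scheme of the paper's proof, down to the same bound (\ref{boundresi}) on $R$. For $\alpha\in(1,2]$ your argument is essentially the paper's; your treatment of the residual -- ruling out two-sided macroscopic spikes within a single excursion by independence of $\tau^{\ttu}_k$ and $\tau^{\ttd}_k$, and aligning the remaining monotone ramps by a ${\rm J}_1$ time change -- is in fact more careful than the paper's Step 3, which only estimates $\sup_t|R(ut)|$.

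The genuine gap is the Cauchy case with integrable persistence times, which Assumption \ref{assglobal} explicitly allows when $\alpha=1$. You assert that ``the balance built into $\mathbf m_{\scr S}=\mathbf b_{\scr S}$ forces $\beta=0$'', but $\mathbf m_{\scr S}=\mathbf b_{\scr S}$ only when both running times are non-integrable; if $\mathbf d_{\scr T}<\infty$ then $\mathbf m_{\scr S}=\mathbf d_{\scr S}$ and, by (\ref{balancetauc00}) with $\alpha=1$, the tail-balance parameter of $\tau^{\mathtt c}_1$ equals $(\mathbf b_{\scr S}-\mathbf d_{\scr S})/(1-\mathbf d_{\scr S}\mathbf b_{\scr S})$, which is nonzero in general. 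A variable in ${\rm D}(1)$ that is centered but \emph{unbalanced} does not satisfy the stable FCLT with zero (or mean) centering: the admissible centering is the truncated-tail term $b(u)$ of Lemma \ref{lemmecenter}, and de Haan theory shows that $u\bigl(\int_{k(u)}^{\infty}\mathbb P(w_1>s)\,ds-\int_{k(u)}^{\infty}\mathbb P(-w_1>s)\,ds\bigr)/k(u)$ diverges precisely when the tails are not balanced. So your skeleton step yields a drifting, non-convergent sequence in that sub-case rather than the symmetric $\mathfrak C$ of (\ref{scalingcauchy2}). The paper circumvents this by replacing $\tau^{\mathtt c}_1$ with the balanced but non-centered $\tau^{\mathtt o}_1=(1-\mathbf b_{\scr S})\tau^{\ttu}_1-(1+\mathbf b_{\scr S})\tau^{\ttd}_1$, writing $S_{\lfloor ut\rfloor}-\mathbf m_{\scr S}ut=C^{\mathtt o}_{N(ut)}-(\mathbf d_{\scr S}-\mathbf b_{\scr S})ut+R^{\mathtt o}(ut)$ and using Lemma \ref{lemmecenter} to show that the mean $\mathbb E[\tau^{\mathtt o}_1]=\mathbf d_{\scr T}(\mathbf d_{\scr S}-\mathbf b_{\scr S})$ is an admissible centering, so that the linear term cancels after the time change. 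A secondary omission: in the non-integrable Cauchy regime your time-change step cannot invoke the law of large numbers ($\mathbf d_{\scr T}=\infty$); the paper establishes Lemma \ref{Step2} there through the relative stability of $T$ and a semimartingale-characteristics argument, and some substitute for this must be supplied.
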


%%%%%%%%%%%%%%%%%%%%%%%%%%%%%%%%%%%%%%%%%%%%%%%%%%%%%%%%%%%%%%%%%%%%%%%%%%%
%%%%%%%%%%%%%%%%%%%%%%%%%%%%%%%%%%%%%%%%%%%%%%%%%%%%%%%%%%%%%%%%%%%%%%%%%%%%%%%%%%%%%%%%%%%%%%%%%%%%%%%%%%%%%%%%%%%%%%%%%%%%%%%%%%%%%%%%%%%%%%%%%%%%%%%%%%%%%%%%%%%%%%%%%%%%%%%%%%%%%%%%%%%%%%%%%%%%%%%%%%%%%%%%%%%%%%%%%%%%%%%%%%%%%%%%%%%%%%%%%%%%%%%%%%%%%%%%%%%%%%%%%%%%%%%%%%%%%%%%%%%%%%%%%%%%%%%%%%%%%%%%%%%%%%%%%%%%%%%%%%%%%%%%%%%%%%%%%%%

%%%%ICI 28/09/16

\subsection{Anomalous situation}

In this section, we detail the functional convergence (\ref{scalingform}) when  $\alpha\in(0,1)$. It turns out  that the limit process is no longer a stable L\'{e}vy process nor even a Markov process. Roughly speaking, it is -- up to a linear and multiplicative term  -- a random continuous piecewise linear function built from an $\alpha$-stable subordinator as follows.

\begin{enumerate}
\item[1)] We attach {\it i.i.d.\@} Bernoulli distributions of parameter $(1+\mathbf b_{\scr S})/2$ to every maximal open intervals  in the complementary the range of the $\alpha$-stable subordinator. 
\item[2)] According to their success or their failure, the slope on the corresponding jump interval is chosen to be equal to $1$ or $-1$ accordingly. 
\end{enumerate}

We refer carefully to Figure \ref{anomal} illustrating some notations and the construction of this process. Again, classical results on regularly varying functions together with Lemma \ref{lemmeequi} and \ref{lemmebound} apply and we can check that  $\mathbf d_{\scr T}=\infty$ and $\mathbf m_{\scr S}=\mathbf b_{\scr S}$ in this section. The reason  we used $\mathbf m_{\scr S}$ rather than $\mathbf b_{\scr S}$ or vice versa is motivated by the will to focus on different meanings -- a mean drift or a balance term  -- according to the situations. In the following, we mention \cite{Bertoin} concerning classical results about L\'{e}vy subordinators and their local times and  \cite[Chap. 6.]{ItoMcKean} for general Markov processes. %\cite[Chap. 8.]{Sharpe} and \cite{FBT}. 
Finally, we allude to \cite{ItoPointMarkov} for the general theory of excursions and Poisson point processes which is strongly connected to these notions.

\begin{figure}[!b]
\centering
\includegraphics[width=124mm]{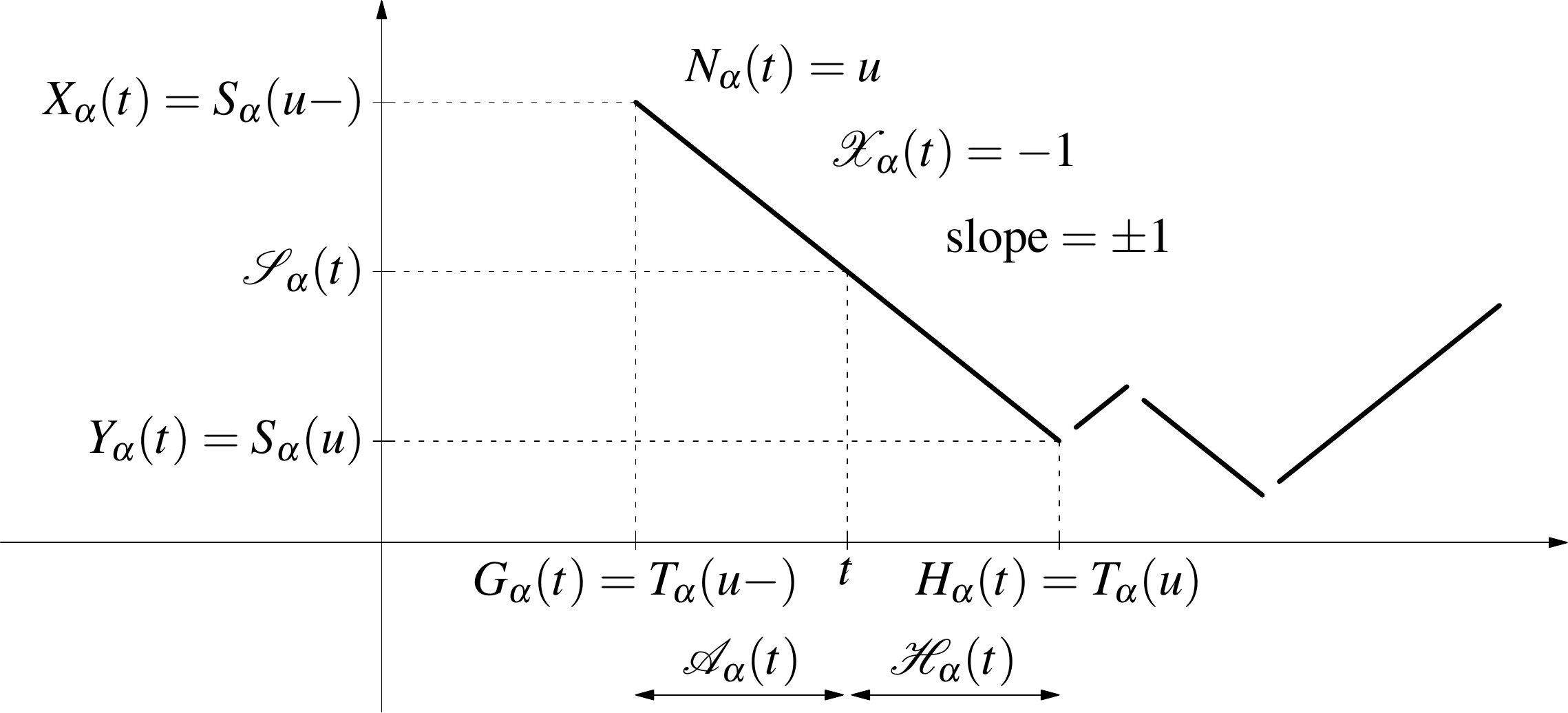}
\caption{\label{anomal}Construction of the anomalous diffusion}
\end{figure}

%%%%%%%%%%%%%%%%%%%%%%%%%%%%%%%%%%%%%%%%%%%%%%%%%%%%%%%%%%%%%%%%%%%%%%%%%%%
%%%%%%%%%%%%%%%%%%%%%%%%%%%%%%%%%%%%%%%%%%%%%%%%%%%%%%%%%%%%%%%%%%%%%%%%%%%%%%%%%%%%%%%%%%%%%%%%%%%%%%%%%%%%%%%%%%%%%%%%%%%%%%%%%%%%%%%%%%%%%%%%%%%%%%%%%%%%%%%%%%%%%%%%%%%%%%%%%%%%%%%%%%%%%%%%%%%%%%%%%%%%%%%%%%%%%%%%%%%%%%%%%%%%%%%%%%%%%%%%%%%%%%%%%%%%%%%%%%%%%%%%%%%%%%%%%%%%%%%%%%%%%%%%%%%%%%%%%%%%%%%%%%%%%%%%%%%%%%%%%%%%%%%%%%%%%%%%%%%

\subsubsection{Preliminaries}

\label{subordinator}

Let $T_{\alpha}$ be an $\alpha$-stable L\'{e}vy subordinator with no drift and recall that it is a {\it c\`{a}dl\`{a}g\@} non-decreasing pure jump process. Let $\mathcal J$ be its  random set of jumps -- the random set of discontinuity points. It follows that the closure  of the  image   $\mathcal R_{\alpha}:=\{T_{\alpha}(t) : t\geq 0\}$ -- the so-called regenerative set or range -- is a perfect set of zero Lebesgue measure satisfying
\begin{equation}
\overline{\mathcal R_{\alpha}}=\mathcal R_{\alpha}\sqcup\left\{T_{\alpha}(u-): u\in\mathcal J\right\}\quad\mbox{and}\quad [0,\infty)\setminus \overline{\mathcal R_{\alpha}}=\bigsqcup_{u\in \mathcal J}(T_{\alpha}(u-),T_{\alpha}(u)).
\end{equation}

The backward recurrence time (the current life time) and the forward renewal time (the residual life time) processes are respectively defined as
\begin{equation}
{G}_{\alpha}(t):=\sup\{s\leq t : s\in\mathcal R_{\alpha}\}\quad\mbox{and}\quad H_{\alpha}(t)=\inf\{s\geq t : s\in\mathcal R_{\alpha}\}.
\end{equation}
Those are {\it c\`{a}dl\`{a}g\@} stochastic processes and we can check that  
%$G_{\alpha}(t)\leq t<H_{\alpha}(t)$, when  $t\notin \mathcal R_{\alpha}$,  with equalities  on the range. More precisely,  
$G_{\alpha}(t)=T_{\alpha}(u-)$ and $H_{\alpha}(t)=T_{\alpha}(u)$ when $ T_{\alpha}(u-)\leq t< T_{\alpha}(u)$ whereas $G_{\alpha}(t)=H_{\alpha}(t)=T_{\alpha}(u)$ when $t=T_{\alpha}(u)$. To go further, we consider the local time of (the inverse of) the stable subordinator $T_{\alpha}$  defined  by
\begin{equation}\label{localtime}
N_{\alpha}(t):=\inf\{s>0 : T_{\alpha}(s)>t\}=\sup\{s>0 : T_{\alpha}(s)\leq t\}.
\end{equation}
Note the usefull switching identity
$\{T_{\alpha}(s)\leq t\}=\{s\leq N_{\alpha}(t)\}$ and the fact that $N_\alpha$  is a continuous stochastic process. When $\alpha=1/2$ it is nothing but the classical local time of a Brownian motion. In full generality,  it is is a non-decreasing  continuous stochastic process such that the support of the random Stieljes measure $dN_{\alpha}$ is equal to the  range of the subordinator. Then, one has $N_{\alpha}(t)=u$ when $ T_{\alpha}(u-)\leq t\leq T_{\alpha}(u)$ and the first and last renewal time processes can be rewritten as
\begin{equation}\label{anomalousdiff0}
{G}_{\alpha}(t)= 
[T_{\alpha}^{-}\circ N_{\alpha}]^{+}(t)
\quad\mbox{and}\quad  H_{\alpha}(t)=T_{\alpha}\circ N_{\alpha}(t),
\end{equation}
where $F^{\pm}(t):=F(t\pm)$ is the right or the left continuous version of a function $F(t)$. 

Furthermore, it is well-known that stable subordinator can be decomposed as
\begin{equation}\label{decomposub}
T_{\alpha}(t)=T_{\alpha}^{\ttu}(t)+T_{\alpha}^{\ttd}(t):= \left(\frac{1+\mathbf b_{\scr S}}{2}\right)^{1/\alpha} T_{\alpha}^{\prime}(t)+\left(\frac{1-\mathbf b_{\scr S}}{2}\right)^{1/\alpha} T_{\alpha}^{\prime\prime}(t),	
\end{equation}
where $T_{\alpha}^{\prime}$  and $T_{\alpha}^{\prime\prime}$ are {\it i.i.d.\@} with the same distribution as $T_{\alpha}$. This decomposition can be obtained intrinsically from $T_{\alpha}$ by labelling each  jump interval (an excursion) $I=(T_{\alpha}(u-),T_{\alpha}(u))$ as in \cite[pp. 342-343]{PitYor}  by {\it i.i.d.\@} Rademacher random variables $\mathcal X_{I}$ independent of $T_{\alpha}$ and of parameter $(1+\mathbf b_{\scr S})/2$. It follows that $T_{\alpha}^{\ttu}(t)$ and $T_{\alpha}^{\ttd}(t)$ can be viewed as the sums up to the time $t$ of the jumps $\Delta T_\alpha(u):=T_\alpha(u)-T_\alpha(u-)$ for which the corresponding labels are respectively equal to one and minus one. They are both thinning of the initial subordinator.  Coupled with the latter decomposition, we can consider the $\alpha$-stable L\'{e}vy process
\begin{equation}
S_{\alpha}(t):= T_{\alpha}^{\ttu}(t)-T_{\alpha}^{\ttd}(t)=\left(\frac{1+\mathbf b_{\scr S}}{2}\right)^{1/\alpha} T_{\alpha}^{\prime}(t)-\left(\frac{1-\mathbf b_{\scr S}}{2}\right)^{1/\alpha} T_{\alpha}^{\prime\prime}(t).	
\end{equation}
This one can be viewed as a pure jump process whose increment $\Delta S_{\alpha}(u)$ is equal to $\Delta T_{\alpha}(u)$ or $-\Delta T_{\alpha}(u)$  according to the label of the corresponding excursion. Following the terminology used in \cite{Straka} we can introduce the so-called lagging and leading {\it c\`{a}dl\`{a}g\@} stochastic processes respectively defined by 
\begin{equation}\label{anomalousdiff2}
{X}_{\alpha}(t):= 
[S_{\alpha}^{-}\circ N_{\alpha}]^{+}(t)
\quad\mbox{and}\quad  Y_{\alpha}(t):=S_{\alpha}\circ N_{\alpha}(t).
\end{equation}
It is not difficult to check  that $X_{\alpha}(t)=S_{\alpha}(u-)$ and $Y_{\alpha}(t)=S_{\alpha}(u)$ when $T_{\alpha}(u-)\leq t<T_{\alpha}(u)$ whereas $X_{\alpha}(t)=Y_{\alpha}(t)=S_{\alpha}(u)$ when $t=T_{\alpha}(u)$.

%%%%%%%%%%%%%%%%%%%%%%%%%%%%%%%%%%%%%%%%%%%%%%%%%%%%%%%%%%%%%%%%%%%%%%%%%%%
%%%%%%%%%%%%%%%%%%%%%%%%%%%%%%%%%%%%%%%%%%%%%%%%%%%%%%%%%%%%%%%%%%%%%%%%%%%%%%%%%%%%%%%%%%%%%%%%%%%%%%%%%%%%%%%%%%%%%%%%%%%%%%%%%%%%%%%%%%%%%%%%%%%%%%%%%%%%%%%%%%%%%%%%%%%%%%%%%%%%%%%%%%%%%%%%%%%%%%%%%%%%%%%%%%%%%%%%%%%%%%%%%%%%%%%%%%%%%%%%%%%%%%%%%%%%%%%%%%%%%%%%%%%%%%%%%%%%%%%%%%%%%%%%%%%%%%%%%%%%%%%%%%%%%%%%%%%%%%%%%%%%%%%%%%%%%%%%%%%

\subsubsection{Arcsine Lamperti anomalous diffusions}

We can now define the so-called Arcsine Lamperti anomalous diffusion by
\begin{equation}\label{anomalousdiff}
\mathcal S_{\alpha}(t):=X_{\alpha}(t)+\frac{t-G_{\alpha}(t)}{H_{\alpha}(t)-G_{\alpha}(t)}(Y_{\alpha}(t)-X_{\alpha}(t)),	
\end{equation}
with $\mathcal S_{\alpha}(t)=X_{\alpha}(t)$ when $t\in\mathcal R_{\alpha}$. In other words, the so-called anomalous diffusion $\mathcal S_{\alpha}(t)$ is the center of mass of the lagging and leading processes $X_{\alpha}(t)$ and $Y_{\alpha}(t)$ with respective weights given by the so-called remaining time and age time  processes 
\begin{equation}
\mathcal H_{\alpha}(t):=H_{\alpha}(t)-t\quad\mbox{and}\quad \mathcal A_{\alpha}(t)=t-G_{\alpha}(t).	
\end{equation}

This kind of construction as already been exposed in \cite{Magdziarz}. From the self-similarity of $T_\alpha$ we get that the law of $\mathcal S_{\alpha}$ is self-similar of index $1$: for any $\lambda>0$, 
\begin{equation}
\{\mathcal S_\alpha(\lambda t)\}_{t\geq 0}\;\overset{\mathcal L}{=}\;\{\lambda \mathcal S_\alpha(t)\}_{t\geq 0}.
\end{equation}

Besides, its law does not depend on the scale parameter chosen to define $T_\alpha$. To go further, it is continuous and of bounded variation and it can be written as
\begin{equation}\label{limitintegral}
\mathcal S_{\alpha}(t)=\int_{0}^{t}\mathcal X_{\alpha}(s)\,ds,
\end{equation}
where $\{\mathcal X_{\alpha}(t)\}_{t\geq 0}$ denote the so-called label process taking its values in $\{-1,1\}$ and such that $\mathcal X_{\alpha}(t):=\mathcal X_{I}$ when $t\in I$ -- no matter its values on  the regenerative set is. We recall that the $\mathcal X_I$ are {\it i.i.d.\@} Rademacher random variables of parameter $(1+\mathbf b_{\scr S})/2$ attached on the excursion intervals $I$. To get the latter integral representation one can see that  $\mathcal S_{\alpha}$ is linear on each excursion interval $I$ with a slope given by $\mathcal X_I$. Then, noting that  there is clearly equality on the random set $\{H_\alpha(t) : t>0\}$ with $Y_\alpha$
 -- the so-called overshoot continuous time random walk -- we obtain the equality on $[0,\infty)$. 
 
 Furthermore, one can see that $\lambda (u)=a\circ s(u)$ is equivalent to $(2-\alpha)(1-\alpha)u/\alpha$ as $u$ goes to infinity and thus  we only need to study the asymptotic of normalized processes $t\mapsto S_{u\,t}/u$. 

%\begin{equation}
%c_{\alpha}:= \frac{(2-\alpha)(1-\alpha)}{\alpha}\left[(1-\mathbf m_{\scr S})^{\alpha}\left(\frac{1+\mathbf b_{\scr S}}{2}\right)+(1+\mathbf m_{\scr S})^{\alpha}\left(\frac{1-\mathbf b_{\scr S}}{2}\right)\right]^{1/\alpha}.	
%\end{equation}

%~ \begin{prop}
%~ There exits a unique invariant measure $\mathbb Q_{\alpha}$ for the regenerative set $\overline{\mathcal R_{\alpha}}$
%~ \end{prop}

\begin{theo}[anomalous situation]\label{complement}
Suppose $\alpha\in(0,1)$, then the functional convergence (\ref{scalingform}) can be rewritten -- in a stronger form -- as
\begin{equation}\label{arcsinescaling}
\left\{
\frac{S_{u \,t}}{u}\right\}_{t\geq 0} 
\;\xRightarrow[u\to\infty]{\mathcal C}\; 
\left\{{\mathcal S}_{\alpha}(t)\right\}_{t\geq 0}.
\end{equation}	
Moreover, the marginal $\mathcal S_{\alpha}(t)$ has a density function $f_t(x)$ on $(-t,t)$ satisfying -- in a weak sense -- the fractional partial differential equation   
\begin{multline}\label{goveq}
\left[\frac{1+{\mathbf m_{\scr S}}}{2}\left(\frac{\partial }{\partial x}+\frac{\partial }{\partial t}\right)^\alpha + \frac{1-{\mathbf m_{\scr S}}}{2}\left(\frac{\partial }{\partial x}-\frac{\partial }{\partial t}\right)^\alpha\right] f_t(x)\\=\frac{1}{\Gamma(1-\alpha)t^\alpha}\left[\frac{1+\mathbf m_{\scr S}}{2}\delta_{\,t}(dx)+\frac{1-\mathbf m_{\scr S}}{2}\delta_{-t}(dx)\right],
\end{multline}
In addition, the latter density admits the integral representation   
\begin{equation}\label{potential}
f_t(x)=\bigintsss_0^t \left[{\frac{1+\mathbf m_{\scr S}}{2}}\frac{u_\alpha(x-(t-s),s)}{\Gamma(1-\alpha)}+\frac{1-\mathbf m_{\scr S}}{2}\frac{u_\alpha(x+(t-s),s)}{\Gamma(1-\alpha)}\right]\frac{ds}{(1-s)^\alpha},
\end{equation}
where $u_\alpha(x,t)$ denotes the $0$-potential density of $(S_\alpha,T_\alpha)$. Furthermore, this it is -- up to an affine transformation -- an arsine Lamperti distribution given by
\begin{equation}\label{density}
f_t(x)=\frac{2\sin(\pi \alpha)}{\pi\,t}\frac{(t-x)^{\alpha-1}(t+x)^{\alpha-1}}{\mathbf r_{\scr S}(t-x)^{2\alpha}+2\cos(\pi \alpha)(t+x)^{\alpha}(t-x)^{\alpha}+\mathbf r_{\scr S}^{-1}\,(t+x)^{2\alpha}},
\end{equation}
with  $\mathbf r_{\scr S}:=(1+\mathbf m_{\scr S})/(1-\mathbf m_{\scr S})$. Also, the stochastic process $(\mathcal X_{\alpha},\mathcal A_{\alpha})$ on the state space $\{-1,1\}\times [0,\infty)$ is Markovian 
\end{theo}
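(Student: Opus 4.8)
The plan is to transfer the problem to the pair of \emph{i.i.d.\@} skeleton increments and then read off the limit from the excursion structure of a stable subordinator. Recall that observing $S$ at the up-to-down turns produces the bivariate random walk $(M_n,T_n)=\big(\sum_{k\le n}(\tau_k^\ttu-\tau_k^\ttd),\sum_{k\le n}(\tau_k^\ttu+\tau_k^\ttd)\big)$. Since $\alpha\in(0,1)$ forces $\mathbf d_{\scr T}=\infty$ and $\mathbf m_{\scr S}=\mathbf b_{\scr S}$, and both $\tau^\ttu_1,\tau^\ttd_1$ are non-integrable and, by Assumption \ref{assglobal} together with Lemmas \ref{lemmeequi} and \ref{lemmebound}, lie in $\mathrm D(\alpha)$ with tail weights $(1\pm\mathbf b_{\scr S})/2$, the first step is to establish the joint functional convergence $\big(M_{\lfloor u\,\cdot\rfloor},T_{\lfloor u\,\cdot\rfloor}\big)/\lambda(u)\Rightarrow (S_\alpha,T_\alpha)$ toward the two thinned subordinators of (\ref{decomposub}); independence of the two increment sequences gives that $T_\alpha^\ttu$ and $T_\alpha^\ttd$ are independent, hence have a.s.\@ disjoint jump sets. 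Because $\lambda(u)\sim (2-\alpha)(1-\alpha)u/\alpha$, the scaling in (\ref{arcsinescaling}) differs only by a multiplicative constant, which is absorbed by the index-$1$ self-similarity and scale-invariance of the law of $\mathcal S_\alpha$.

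The second step is to recover $S_{u\,t}/u$ from $(M,T)$ by inverting the time change. Writing $N(s)=\sup\{n:T_n\le s\}$, one has $S_{T_n}=M_n$ and, inside the $n$th cycle, $S$ is the concatenation of a descent of length $\tau_n^\ttd$ and a rise of length $\tau_n^\ttu$; at the macroscopic scale only the larger of the two survives, so a jump interval of $T_\alpha$ is, up to a negligible part, a single monotone excursion of slope $\pm1$ carried by the Rademacher label of (\ref{decomposub}). I would therefore compose the convergence of the first step with the (a.s.\@ continuous) inverse $N_\alpha$ and the linear interpolation, exactly reproducing the lagging/leading pair $(X_\alpha,Y_\alpha)$ of (\ref{anomalousdiff2}) and the barycentric formula (\ref{anomalousdiff}). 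The delicate point -- and what I expect to be the main obstacle -- is to upgrade this to the uniform ($\mathcal C$) topology: one must prove continuity of the composition-with-inverse functional at the limit (as in the continuous-time-random-walk limit theorems of \cite{Straka,Magdziarz}) and, above all, show that the discarded V-shaped within-cycle overshoots vanish uniformly on compacts, which uses the disjointness of the jump sets of $T_\alpha^\ttu$ and $T_\alpha^\ttd$ together with a maximal inequality on the small jumps.

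For the marginal law I would bypass the heavy machinery via an occupation-time identity. From the integral representation (\ref{limitintegral}) and $\mathcal X_\alpha\in\{-1,1\}$ one gets the exact identity $\mathcal S_\alpha(1)=2U-1$, where $U=\int_0^1\ind{\mathcal X_\alpha(s)=1}\,ds$ is the fraction of $[0,1]$ spent in up-excursions; since the range $\mathcal R_\alpha$ is Lebesgue-null this is unambiguous. The labels being \emph{i.i.d.\@} Rademacher of parameter $p=(1+\mathbf m_{\scr S})/2$ attached to the excursions of an $\alpha$-stable subordinator, $U$ obeys Lamperti's generalized arcsine law (the occupation-time theorem for regenerative sets, specializing to occupation times of skew Bessel processes when $\alpha=1/2$). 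Plugging the Lamperti density with asymmetry $p/(1-p)=\mathbf r_{\scr S}$ into the affine change of variables $x=2U-1$, $dx=2\,dU$, yields (\ref{density}) at $t=1$; the index-$1$ self-similarity then gives $f_t(x)=t^{-1}f_1(x/t)$ and the general statement, and this is what justifies the arcsine Lamperti terminology.

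Finally, the two remaining assertions are structural. The integral representation (\ref{potential}) I would obtain by a last-passage/excursion decomposition: conditioning on the straddling excursion starting at time $s$ and position $y$ contributes the $0$-potential density $u_\alpha(y,s)$ of $(S_\alpha,T_\alpha)$ times the tail $(t-s)^{-\alpha}/\Gamma(1-\alpha)$ of the stable excursion-length measure, split into an up-part (weight $(1+\mathbf m_{\scr S})/2$, $y=x-(t-s)$) and a down-part (weight $(1-\mathbf m_{\scr S})/2$, $y=x+(t-s)$); after the self-similar reduction to $t=1$ this is exactly (\ref{potential}). The governing equation (\ref{goveq}) then follows because the L\'evy measure of $(S_\alpha,T_\alpha)$ is carried by the two rays $(1,1)$ and $(-1,1)$ with weights $(1\pm\mathbf m_{\scr S})/2$, so that -- up to the standard fractional-calculus orientation -- the adjoint generator is the combination of directional fractional derivatives $\frac{1+\mathbf m_{\scr S}}2(\partial_x+\partial_t)^\alpha+\frac{1-\mathbf m_{\scr S}}2(\partial_x-\partial_t)^\alpha$ of (\ref{goveq}), the right-hand side being the source produced by the extreme-slope boundary $x=\pm t$; one then checks that (\ref{potential}) solves it weakly, recovering the L\'evy-walk governing equations. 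For the last claim, $\mathcal A_\alpha(t)=t-G_\alpha(t)$ is the age process of the regenerative set $\mathcal R_\alpha$, which grows at unit rate and is reset to $0$ at each renewal, hence is Markov by the regenerative (strong Markov) property of $T_\alpha$; appending the current label $\mathcal X_\alpha$, refreshed by an independent Rademacher draw at each reset and frozen in between, preserves the Markov property, so $(\mathcal X_\alpha,\mathcal A_\alpha)$ is a piecewise-deterministic Markov process on $\{-1,1\}\times[0,\infty)$.
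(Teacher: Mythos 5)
Your proposal is correct and follows essentially the same route as the paper: convergence of the skeleton walk to the coupled pair $(S_\alpha,T_\alpha)$ combined with the L\'evy-walk continuous-mapping machinery of \cite{Magdziarz} for the functional limit, the occupation-time identity $\mathcal S_\alpha(1)=2U-1$ together with the generalized arcsine law for labelled excursions of a stable regenerative set (the paper's first proof, via \cite{PitYor} and \cite{BPY}) for the density (\ref{density}), the two-ray L\'evy measure and $0$-potential density for (\ref{goveq})--(\ref{potential}), and regeneration for the Markov property. Two small remarks: the $\mathcal C$-tightness is actually free because the prelimit paths $t\mapsto S_{ut}/u$ are $1$-Lipschitz, so only finite-dimensional convergence needs proof (which largely disposes of your ``delicate point''), and for the last claim the paper replaces your ``reset at each renewal'' picture -- which does not literally apply to a perfect regenerative set with no isolated points -- by the explicit overshoot kernel $\mathbb P(\mathcal H_\alpha(t)\in dh\mid\mathcal A_\alpha(t)=a)=\alpha a^{\alpha}(a+h)^{-\alpha-1}\,dh$ and a gluing-map argument, while also supplying a second, generating-function proof of (\ref{density}) in the spirit of \cite{Lamperti!} that your proposal does not cover.
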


\subsubsection{On general PRWs}

In \cite[Section 4.]{PRWI} the recurrence and transience criteria are extended to a wider class of PRWs -- without any renewal assumption -- suitable  perturbations  of the double-infinite comb model when the persistence times are non-integrable. In the same way, it would be possible -- adding some other natural stability assumptions  -- to deduce the same theorem for these walks as it is stated below.

\begin{figure*}[!b]
\centering
\includegraphics[width=124mm]{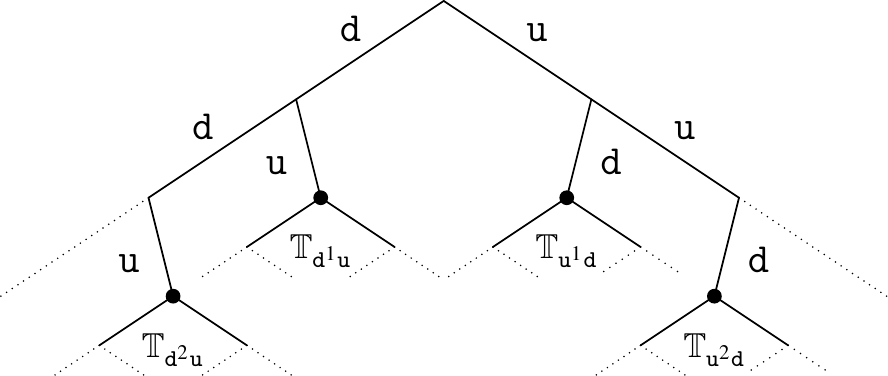}
\caption{Grafting of the double-infinite comb}
\label{fig:3}
\end{figure*}

Consider a double-infinite comb and attach to each finite leaf $c$ another context tree $\mathbb T_{c}$ (possibly trivial) as in Figure \ref{fig:3}. The leaves of the related graft are denoted by $\mathcal C_{c}$ and this one is endowed with Bernoulli distributions $\{q_{l} : l\in\mathcal C_{c}\}$ on $\{\ttu,\ttd\}$. Note that any probabilized context tree on $\{\ttu,\ttd\}$ can be constructed in this way. We denote by $S^{\mathtt g}$ the corresponding PRW. In this case, the random walk is particularly persistent in the sense that the rises and descents are no longer independent. A renewal property may still hold but it is more tedious to expect in general. Let $\underline S$ and $\overline S$ be the double-infinite comb PRWs with respective transitions
\begin{multline}\label{tilde}
\underline{\alpha}_{n}^{\ttu}:= \sup\{q_{c}(\ttd) : c\in  \mathcal C_{\ttu^{n}\ttd}\},\quad \underline{\alpha}_{n}^{\ttd}= \inf\{q_{c}(\ttu) : c\in  \mathcal C_{\ttd^{n}\ttu}\},\\
\mbox{and} \quad \overline{\alpha}_{n}^{\ttu}:= \inf\{q_{c}(\ttd) : c\in  \mathcal C_{\ttu^{n}\ttd}\},\quad \overline{\alpha}_{n}^{\ttd}:= \sup\{q_{c}(\ttu) : c\in  \mathcal C_{\ttd^{n}\ttu}\}.
\end{multline}

The proof of the following proposition is omitted and follows from comparison results exposed in \cite{PRWI}.

\begin{prop} \label{grafts}
Assume that $\underline S$ and $\overline S$ satisfy the same Assumption \ref{assglobal} with  $\alpha\in(0,1)$. To be more precise, we suppose that the associated  mean-drifts are equal and non-extremal but also that the corresponding random variables $\underline \tau_1^{\mathtt c}$ and $\overline \tau_1^{\mathtt c}$ are both in ${\rm D}(\alpha)$, the stable limit distribution being 
the same by taking the same normalization and centering terms. Then Theorem \ref{complement} holds for  $S^{\mathtt g}$.
\end{prop}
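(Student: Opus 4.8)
The plan is to use the comparison machinery of \cite{PRWI} to trap the grafted walk $S^{\mathtt g}$ between the two double-infinite combs $\underline S$ and $\overline S$, each of which already falls under the anomalous regime of Theorem \ref{complement}. By hypothesis the two combs share the index $\alpha\in(0,1)$, the mean drift $\mathbf m_{\scr S}=\mathbf b_{\scr S}$ and the same $\alpha$-stable limit law for $\underline\tau_1^{\mathtt c}$ and $\overline\tau_1^{\mathtt c}$ with identical normalization and centering; since in this regime the limit (\ref{anomalousdiff}) depends only on $\alpha$ and on the Rademacher parameter $(1+\mathbf b_{\scr S})/2$, the two combs possess one and the same scaling limit $\mathcal S_{\alpha}$. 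The whole point is to show that $S^{\mathtt g}$ inherits it.

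First I would realize the three VLMCs on a common probability space through a single sequence of uniform variables, one per step. By the very definition (\ref{tilde}), whatever leaf of a graft the walk $S^{\mathtt g}$ currently sits at after $n$ consecutive moves in a fixed direction, its reversal probability $q_c$ lies in $[\overline\alpha_n^{\ell},\underline\alpha_n^{\ell}]$; letting all three walks continue in the current direction exactly when the common uniform exceeds their respective thresholds yields the pathwise ordering of successive run lengths $\underline\tau_k^{\ell}\le\tau_k^{\ell,\mathtt g}\le\overline\tau_k^{\ell}$ for every $\ell\in\{\ttu,\ttd\}$. This is the functional counterpart of the domination used in \cite{PRWI} for the recurrence/transience dichotomy. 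Passing to tails, the persistence-time tails of $S^{\mathtt g}$ are squeezed between those of the two combs; as the latter are regularly varying of index $-\alpha$ and with the common balance $\mathbf b_{\scr S}$, the squeeze --- via Lemmas \ref{lemmeequi} and \ref{lemmebound} and the classical results on regular variation of \cite{BGT} --- forces $S^{\mathtt g}$ to have regularly varying tails of the same index and the same balance, hence $\tau_1^{\mathtt c,\mathtt g}\in{\rm D}(\alpha)$ with the same stable limit and $\mathbf m_{\scr S}=\mathbf b_{\scr S}$, $\mathbf d_{\scr T}=\infty$.

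Next I would transfer the convergence of the U-turn structure. The partial sums of the up-runs and of the down-runs are each non-decreasing in the coupling, so they are trapped between the corresponding sums for $\underline S$ and $\overline S$; because the two combs share the same normalization and centering, the tail equivalence $\underline{\mathcal T}_{\ell}\sim\overline{\mathcal T}_{\ell}$ makes the running slack of lower order, and the squeeze alone --- without any appeal to an \emph{i.i.d.} or renewal structure for $S^{\mathtt g}$ --- forces the rescaled up- and down-sums of $S^{\mathtt g}$ to converge to the same thinned subordinators $T_{\alpha}^{\ttu},T_{\alpha}^{\ttd}$ of (\ref{decomposub}). Consequently the rescaled U-turn times $T^{\mathtt g}$ and skeleton walk $M^{\mathtt g}$ converge to the very same decorated subordinator $(T_{\alpha},S_{\alpha})$ as both combs, the up/down nature of each dominant run --- fixed by the deterministic alternation --- being automatically common to the three walks. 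Since the construction of $\mathcal S_{\alpha}$ from this decorated subordinator through $N_{\alpha}$, $G_{\alpha}$, $H_{\alpha}$ and the lagging/leading processes $X_{\alpha},Y_{\alpha}$ is a continuous functional --- exactly the continuous map underlying the proof of Theorem \ref{complement} in Section \ref{sectionanomalous} --- this convergence transfers to the trajectories, giving (\ref{arcsinescaling}) for $S^{\mathtt g}$ in the $\mathcal C$-topology. The fractional equation (\ref{goveq}), the potential representation (\ref{potential}) and the arcsine Lamperti density (\ref{density}) are then inherited by identification of the limit.

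The main obstacle is precisely the loss, for $S^{\mathtt g}$, of the renewal property and of the independence between rises and descents noted above, which forbids deriving the convergence of its U-turn structure directly from the \emph{i.i.d.} arguments of Section \ref{sectionanomalous}. The crux is therefore to make the pathwise sandwich of the first step quantitative enough that the slack between $\underline S$ and $\overline S$ --- which vanishes under the common normalization precisely because the two combs share $\alpha$ and $\mathbf b_{\scr S}$ --- dominates, uniformly on compact time sets, the rescaled discrepancy, so that the dependence introduced by the grafts never survives in the limit. Equivalently one must check that the handful of long runs carrying the scaling limit are asymptotically insensitive to the grafting, the coupling matching their lengths and signs up to lower order; this uniform control of the slack, rather than the mere pointwise domination furnished by \cite{PRWI}, is where the extra stability assumptions alluded to in the statement are genuinely needed.
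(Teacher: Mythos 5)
The paper itself omits the proof of Proposition \ref{grafts}, saying only that it follows from the comparison results of \cite{PRWI}, and your sandwich strategy is exactly the route the authors have in mind; so the approach matches. Two remarks, one cosmetic and one substantive.

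Cosmetic: with the definitions (\ref{tilde}), $\underline S$ takes the \emph{supremum} of the reversal probabilities after $n$ rises and the \emph{infimum} after $n$ descents, so under the coupling it has the shortest rises and the longest descents; your inequality $\underline\tau_k^{\ell}\le\tau_k^{\ell,\mathtt g}\le\overline\tau_k^{\ell}$ therefore reverses for $\ell=\ttd$. Moreover a single uniform per time step does not keep the three walks synchronized once their first runs end (they then compare the same uniform to thresholds at different memory depths), so to get the ordering for every $k$ one must couple run by run, using that the conditional law of the next run of $S^{\mathtt g}$ given its past is stochastically sandwiched between the (unconditional) run laws of the two combs.

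Substantive: the step you yourself flag as the crux --- that the slack between the two combs vanishes under the common normalization --- is asserted via ``tail equivalence makes the running slack of lower order'', and this is neither proved nor the right mechanism. Pathwise domination together with $\underline{\mathcal T}_\ell\sim\overline{\mathcal T}_\ell$ does not obviously give $\sum_{k\le n}(\overline\tau_k^{\ell}-\underline\tau_k^{\ell})=o_P(a(n))$: the nonnegative differences can themselves be heavy-tailed, and for $\alpha\in(0,1)$ their partial sums need not be negligible against $a(n)$ on tail grounds alone. The clean argument is distributional rather than quantitative: under the run-by-run coupling the rescaled partial sums for $\underline S$ and $\overline S$ are pathwise ordered and, by the hypothesis of identical normalization, centering and limit law, converge to the \emph{same} distribution; any subsequential joint limit $(X,Y)$ then satisfies $X\le Y$ a.s. and $X\overset{\mathcal L}{=}Y$, hence $X=Y$ a.s. (take expectations of $f(Y)-f(X)\ge 0$ for bounded increasing $f$), so the difference tends to $0$ in probability. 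Applied to the finite-dimensional distributions of the up- and down-sums jointly, this closes the squeeze, forces the rescaled $(M^{\mathtt g},T^{\mathtt g})$ to converge to the same decorated pair $(S_\alpha,T_\alpha)$, and lets the continuous-mapping argument of Section \ref{sectionanomalous} run for $S^{\mathtt g}$. Without this (or an equivalent quantitative estimate) your proof is incomplete at precisely the point you identify as the main obstacle.
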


For instance, Theorem \ref{complement} holds when the (non-trivial) grafts are some finite trees in finite number  such that the attached Bernoulli distributions are non-degenerated and induce running times in ${\rm D(\alpha)}$.

%%%%%%%%%%%%%%%%%%%%%%%%%%%%%%%%%%%%%%%%%%%%%%%%%%%%%%%%%%%%%%%%%%%%%%%%%%%%%%%%%%%%%%%%%%%%%%%%%%%%%%%%%%%%%%%%%%%%%%%%%%%%%%%%%%%%%%%%%%%%%%%%%%%%%%%%%%%%%%%%%%%%%%%%%%%%%%%%%%%%%%%%%%%%%%%%%%%%%%%%%%%%%%%%%%%%%%%%%%%%%%%%%%%%%%%%%%%%%%%%%%%%%%%%%%%%%%%%%%%%%%%%%%%%%%%%%%%%%%%%%%%%%%%%%%%%%%%%%%%%%%%%%%%%%%%%%%%%%%%%%%%%%%%%%%%%%%%%%%%%%%%%%%%%%%%%%%%%%%%%%%%%%%%%%%%%%%%%%%%%%%%%%%%%%%%%%%%%%%%%%%%%%%%%%%%%%%%%%%%%%%%%%%%%%%%%%%%%%%%%%%%%%%%%%%%%%%%%%%%%%%%%%%%%%%%%%%%%%%%%%%%%%%%%%%%%%%%%%%%%%%%%%%%%%%%%%%%%%%%%%%%%%%%%%%%%%%%%%%%%%%%%%%%%%%%%%%%%%%%%%%%%%%%%%%%%%%%%%%%%%%%%%%%%%%%%%%%%%%%%%%%%%%%%%%%%%%%%%%%%%%%%%%%%%%%%%%%%%%%%%%%%%%%%%%%%%%%%%%%%%%%%%%%%%%%%%%%%%%%%%%%%%%%%%%%%%%%%%%%%%%%%%%%%%%%%%%%%%%%%%%%%%%%%%%%%%%%%%%%%%%%%%%%%%%%%%%%%%%%%%%%%%%%%%%%%%%%%%%%%%%%%%%%%%%%%%%%%%%%%%%%%%%%%%%%%%%%%%%%%%%%%%%%%%%%%%%%%%%%%%%%%%%%%%%%%%%%%%%%%%%%%%%%%%%%%%%%%%%%%%%%%%%%%%%%%%%%%%%%%%%%%%%%%%%%%%%%%%%%%%%%%%%%%%%%%%%%%%%%%%%%%%%%%%%%%%%%%%%%%%%%%%%%%%%%%%%%%%%%%%%%%%%%%%%%%%%%%%%%%%%%%%%%%%%%%%%%%%%%%%%%%%%%%%%%%%%%%%%%%%%%%%%%%%%%%%%%%%%%%%%%%%%%%%%%%%%%%%%%%%%%%%%%%%%%%%%%%%%%%%%%%%%%%%%%%%%%%%%%%%%%%%%%%%%%%%%%%%%%%%%%%%%%%%%%%%%%%%%%%%%%%%%%%%%%%%%%%%%%%%%%%%%%%%%%%%%%%%%%%%%%%%%%%%%%%%%%%%%%%%%%%%%%%%%%%%%%%%%%%%%%%%%%%%%%%%%%%%%%%%%%%%%%%%%%%%%%%%%%%%%%%%%%%%%%%%%%%%%%%%%%%%%%%%%%%%%%%%%%%%%%%%%%%%%%%%%%%%%%%%%%%%%%%%%%%%%%%%%%%%%%%%%%%%%%%%%%%%%%%%%%%%%%%%%%%%%%%%%%%%%%%%%%%%%%%%%%%%%%%%%%%%%%%%%%%%%%%%%%%%%%%%%%%%%%%%%%%%%%%%%%%%%%%%%%%%%%%%%%

%%%% 28/09/16 ici le soir, revoir remarque 3.4

\section{Few comments and contributions}

\label{sectioncontri}

\setcounter{equation}{0}
After some clarifications on the choice of the normalizing functions -- see Section \ref{sectionslut} -- and the connections between the scaling limits associated with the piecewise constant or the linear interpolations in Theorem \ref{thmgene0}, we give some interpretations on various parameters and limits involved in Theorems \ref{scalingbrownian} and \ref{complement} -- see Section \ref{sectionthm}. Thereafter -- see Section \ref{sectext} -- we briefly explain how to extend our results to the extremal mean drift situations. Then -- see Section \ref{phase} --  we interpret the essential difference between the limit processes in Theorems \ref{scalingbrownian} and  \ref{complement} as a phase transition phenomenon with respect to the memory. Finally, the last Section \ref{sectCTRW} is devoted to the relation with a part of the literature on CTRWs and their connections with the L\'{e}vy walks or the DRRWs.

\subsection{Regular variations and Slutsky type arguments}

\label{sectionslut}

First of all, we shall see that the normalization function and those  involved in its construction are regularly varying. More precisely, we can show that they are characterized by the relations
\begin{equation}
\frac{a(u)}{(\Sigma\circ a(u))^2}\;\underset{u\to\infty}{\sim}\; u\quad\mbox{and}\quad \Theta\circ a\circ s(u)\, s(u) \;\underset{u\to\infty}{\sim}\; u.
\end{equation} 

Then, in virtue of Slutsky type arguments -- see \cite[Theorem 3.4, p. 55]{ResH} and \cite[Proposition 3.1., p. 57]{ResH} or \cite[Theorem 3.1., p. 27]{Billcvg} for instance -- together with classical properties about regularly varying functions -- see the reference \cite{BGT} -- it comes that in the definition of $\lambda(u)$, along with $\Sigma(t)$ and $\Theta(t)$, the functions $a(u)$ and $s(u)$ can replaced by any other equivalent function in a neighbourhood of infinity to get the same functional convergences.

In this spirit,  when the persistence times are both square integrable, one can replace $\Xi_{2}(u)$ in (\ref{scalingBM2000}) by the the standard deviation of $\tau_1^{\mathtt c}$ to  retrieve the CLT shown in \cite{peggy}. Also,  when $\mathbf d_{\scr T}<\infty$, one can replace $D(u)$  by $\mathbf d_{\scr T}$ and thus $\mathbf m_{\scr S}$ by $\mathbf d_{\scr S}$ in  (\ref{scalingcauchy2}). 

\subsection{About the three theorems}

\label{sectionthm}

First, we stress that each of the functional convergences in (\ref{scalingform}) implies the other one. Indeed, due to the tightness criterion \cite[Theorem 12.12.3., p. 426]{WW} and the form of the oscillation function for the $M_1$-metric,  the tightness of one induces the tightness of the other. Then, the identification  of the finite-dimensional distribution follows from $S_t-1\leq S_{\lfloor t\rfloor}\leq S_t+1$ and the Slutsky lemma. As a consequence, the general Theorem \ref{thmgene0} is a direct consequence of Theorems \ref{scalingbrownian} and \ref{complement}.

 Furthermore,  we deduce from (\ref{scalingcauchy2}) and (\ref{Du}) the following  interpretation of the so-called mean drift $\mathbf m_{\scr S}$ -- appearing here as a drift in probability -- since for $\alpha\in[1,2]$, the following holds
\begin{equation}
\lim_{n\to\infty}\frac{S_{n}}{n}\;\overset{\mathbb P}{=}\;\mathbf m_{\scr S}.
\end{equation}
Even if the latter convergence is a straightforward consequence of the almost sure convergence exposed in \cite{PRWI} under the assumption that the running times are both integrable, it is completely new otherwise.

To go further, if $\alpha=1/2$ and ${\mathbf m}_{\scr S}=0$ in Theorem \ref{complement} then one can note that the distribution $f_t(x)$ given in (\ref{density}) is nothing but the push forward image by $x\longmapsto 2tx-1$ of the  classical arcsine distribution, the law of the occupation time of the half-line, up to the time $t$, of a one-dimensional standard Brownian motion. 
%As a more recent reference we can  cite \cite{FKY}.
In full generality, it also appears as the limit of the mean sojourn time of some classical random walks, including a wide class of discrete time processes described by Lamperti in \cite{Lamperti!}. It is also explained in \cite{BPY,WataLampLaw} that it is -- up to an affine transformation -- the law of the occupation time of the half-line of a skew Bessel process of dimension $2-2\alpha$ with a skewness parameter $(1+{\mathbf m}_{\scr S})/2$. Moreover, it is shown in \cite[Corollary 4.2., p. 343]{PitYor} that  $f_1(x)$ can be represented as the distribution of the random variable   
\begin{equation}\label{arcsin}
\mathbf D_{\alpha}:=\frac{T_{\alpha}^{\ttu}(1)-T_{\alpha}^{\ttd}(1)}{T_{\alpha}^{\ttu}(1)+T_{\alpha}^{\ttd}(1)}.
\end{equation}
Hence, the marginal convergence at time $t=1$ in (\ref{arcsinescaling})
%\begin{equation}\label{scalingform-1}
%\lim_{n\to\infty}\frac{S_{n}}{n}\;\overset{\mathcal L}{=}\;\mathbf D_{\alpha},
%\end{equation}
can be interpreted as a kind of law of large number in distribution which extends the classical one in \cite{PRWI} -- the means in (\ref{drift-def2}) being replaced by some canonical positive $\alpha$-stable random variables. 

In addition, the terminology used for the so-called mean drift $\mathbf m_{\scr S}$ is a direct consequence of the latter considerations since in any case $\alpha\in]0,2]$ we get that
\begin{equation}\label{meandrift1}
\lim_{n\to\infty}\frac{S_n}{n}\;\overset{\rm L^{1}}{=}\;\mathbf m_{\scr S}. 	
\end{equation}

Besides -- among other considerations -- it turns out that $(\mathcal S_{\alpha}$, $\mathcal X_{\alpha}$, $\mathcal A_{\alpha}$) is  somehow the continuous time counterpart of ($S$, $X$, $A$) and the label process $\mathcal X_{\alpha}$ is in some sense a continuous time VLMC. We recall that the discrete age time process $A$ is  introduced in (\ref{agetimediscret}) as $S$ the corresponding PRW and the two-sided sequence of jumps $X$. Indeed -- as it is explained in \cite{peggy} -- the stochastic process $\{(X_{n},A_{n})\}_{n\geq 0}$ on   $\{\ttu,\ttd\}\times \mathbb Z^{+}$ is also a Markov process.

\subsection{On extremal situations}

\label{sectext}

We have exclude in Assumption \ref{assglobal} the extremal mean drifts -- say $\mathbf m_{\scr S}=1$ for the example -- situation that may arise when $\alpha=1$ and $\mathbf d_{\scr T}=\infty$ or when $\alpha\in(0,1)$. In some sense, this mean -- in a first approximation -- that the weight of the persistence time $\tau_1^\ttd$ in the scaling limits can be omitted.

As a matter of facts, we can obtain similar functional convergences as   (\ref{scalingcauchy2}) or (\ref{arcsinescaling}) but there toward  the null or the identity process respectively. To this end, it suffices to replace  $\Sigma(t)^{2}$  by  $V(t)$ defined in (\ref{sumtheta})  in the settings of $a(u)$ and the proofs follows the same lines. 

The additional conditions to get non trivial limits are more subtle and related to the negligible persistence time. Roughly speaking, when $\tau_1^\ttd$ belongs to ${\rm D}(\gamma)$ with $0<\gamma\leq \alpha<1$ -- hypothesis $(\#)$ -- or when it is a relatively stable distribution in the sense of \cite[Chap. 8.8, p. 372]{BGT} -- hypothesis $(\star)$  -- then the generic form of the functional convergence seems to be 
\begin{multline}
\left\{
-\frac{u^{1-\frac{\alpha}{\gamma}}}{\Xi(u)}\left(\frac{S_{\lfloor u \,t\rfloor}}{u}- t\right)\right\}_{t\geq 0} 
\;\xRightarrow[u\to\infty]{{\rm J}_1}\; 
\left\{{T}_{\gamma}^\ttd\circ N_\alpha^\ttu (t))\right\}_{t\geq 0}\quad(\#),\\
\quad\mbox{or}\quad 
\left\{
-\frac{u^{1-\alpha}}{\Xi(u)}\left(\frac{S_{u\,t}}{u}- t\right)\right\}_{t\geq 0} 
\;\xRightarrow[u\to\infty]{{\mathcal C}}\; 
\left\{N_\alpha^\ttu (t))\right\}_{t\geq 0}\quad(\star),
\end{multline}
where $\Xi(u)$ is slowly varying and $T_\gamma^\ttd$ is a $\gamma$-stable subordinator  independent -- contrary to the non-extremal situation -- of an $\alpha$-local time $N_\alpha^\ttu$. In other words the limit process is an uncoupled CTRW under the assumption $(\#)$.

Furthermore, assuming in place of the relative stability that $\tau_1^\ttd\in{\rm D(\gamma)}$ with $\gamma\in[1,2]$, the next term in the asymptotic expansion of $(\star)$ is -- heuristically -- of the  order of $S_\gamma^\ttd\circ N_\alpha^\ttu(t)/u^{\alpha/\gamma}$ where $S_\gamma^\ttd$ is a $\gamma$-stable L\'{e}vy process independent of $N_\alpha^\ttu$.

\subsection{Phase transition phenomenon on the memory}

\label{phase}

It follows from our results and Assumption \ref{assglobal} that the limit process and the proper normalizations of the walk given in (\ref{scalingform}) strongly depend on  the tail distributions  of the length of runs. In fact, we bring out phase transition phenomena concerning the speed of diffusivity and the memory of the limit process. 

When the tail distributions are sufficiently light -- that is $\alpha\in[1,2]$ in (\ref{cond10}) -- the limit process is Markovian -- we lose the long term memory. In addition, the sub-linear normalizing function $\lambda(u)$ will be $(1/\alpha)$-regularly varying  as the index of self-similarity of the limit process. 

When the tails are sufficiently heavy -- that is $\alpha\in(0,1)$ --  the limit process is no-longer Markovian. We keep an unbounded  memory encoded -- in addition to its position -- by the   \enquote{current} label $\mathcal X_\alpha(t)$ and the age time $\mathcal A_\alpha(t)$. Moreover, the limit process is somehow ballistic and it is self-similar of index $1$. 

%we shall see that the mean drift $\mathbf m_{\scr S}$ is equal to the classical almost sure drift $\mathbf d_{\scr S}$, except for the boundary case $\alpha=1$ when the persistence times are both of infinite mean. In this situation, it is then equal to the balance term $\mathbf b_{\scr S}$. Moreover, 
%and we call it an arcsine Lamperti  anomalous diffusion. The normalization function $\lambda(u)$ is this time exactly linear. 

%\subsection{Beyond and around CTRW}

\subsection{Around and beyond CTRWs}

\label{sectCTRW}

In full generality, anomalous diffusions -- as the lagging process $X_\alpha(t)$ in (\ref{anomalousdiff2}) -- appear as scaling limits of (possibly coupled) CTRWs named also renewal reward processes or L\'{e}vy walks following the context. Those are nothing but classical RW subordinated to a counting process. Their scaling limits and their finite-dimensional distributions has been widely investigated in \cite{Meer2,Meer1,Meer1cor,MeerStra,Straka} for example. Besides, closely related to our model, DRRW has been introduced in \cite{Mauldin1996} to model ocean surface wave fields. Those are nearest neighbourhood random walks on $\mathbb Z^{d}$ keeping their directions during random times $\tau$, independently and identically drawn after every change of directions.  We refer to \cite{Scalingdirectionaly1998,Rastegar:2012,Scalingdirectionaly2014} where are revealed diffusive and super-diffusive behaviours. These walks are intrinsically continuous and can be seen as a linear interpolation of a CTRW -- as the \say{true} L\'{e}vy walks studied in \cite{Magdziarz}.

Our model generalizes one dimensional DRRW since asymmetrical transition probabilities $\alpha_{n}^{\ttu}$ and $\alpha_{n}^{\ttd}$ lead in general to running times $\tau^{\ttu}_1$ and $\tau^{\ttd}_1$ with distinct distributions. Even if the mean drift $\mathbf m_{\scr S}$ represents the classical almost sure drift $\mathbf d_{\scr S}$ of the PRW when it is  well defined, it appears to be a drift in mean  when the persistence times are not integrable and thus also in full generality as it is pointed out by (\ref{meandrift1}). This relevant characteristic has not yet been revealed to our knowledge.

Furthermore, it seems that some results in \cite{Scalingdirectionaly1998,Scalingdirectionaly2014} are not entirely true when the persistence times are not integrable, and so it  must have some  misunderstanding in their proofs. We think more precisely to \cite[Theorem 1.3., p. 370]{Scalingdirectionaly1998} and equation (3.24), p. 3281 relative to the proof of \cite[Theorem 2.6., p. 3272]{Scalingdirectionaly2014}. In fact, with our settings, those results would imply that 
\begin{equation}\label{scalingform-2}
\frac{S_{n}}{n}\;\xRightarrow[n\to\infty]{\mathcal L}\;	Y_{\alpha}(1).
\end{equation}
However, the latter convergence can not be true since  $Y_{\alpha}(1)$ is the overshoot limit of the underlying CTRW and thus it is not compactly supported -- contrary to $X_\alpha(1)$. Moreover, in their results it misses the residual term insuring the continuity of the limit process. 

 As a matter of fact, this kind of subtle mistake -- corrected in \cite{Meer1cor} -- has already been made in\cite{Meer1}. In that paper the authors prove some powerful limit theorems concerning coupled CTRW. The outcomes of our interest are \cite[Theorem 3.1., p. 738]{Meer1} together with \cite[Theorem 3.4., p.744]{Meer1} upgraded with  the slight but significant modification in \cite{Meer1cor}.

Finally, we also  fill some gaps in the literature. In addition to only investigating symmetric DRRWs in \cite{Scalingdirectionaly1998,Scalingdirectionaly2014}, the authors have omitted to treat the Cauchy situation $\alpha=1$.  This case is more delicate as it is explained in Section \ref{sectionl\'{e}vy}. To be more precise, the choice of the suitable centering term is not clear and the counting process $N(t)$ associated with the time $\tau_1^\ttd+\tau_1^\ttu$ no longer satisfies a SLLN so that we need to revise the proof when $\alpha\in(1,2]$. 

Moreover, in the case when $\alpha\in (0,1)$, there is no available proof of a functional convergence (at best a marginal one) and the limit distribution is not explicit to our knowledge. In this paper, we give the explicit density of such marginal limit in (\ref{density}) and we adopt the following  complementary approaches, according to the desired aims, in Section \ref{sectionanomalous}:

\begin{enumerate}
\item[1)] We adapt the results in \cite[Section 5.]{Magdziarz} representing the marginal  densities of some L\'{e}vy walks  as the solution of pseudo partial differential equations. 
%We only identify the mean and prove the desired functional convergence with this method.
\item[2)] We make the link with the excursions theory of $\alpha$-stable subordinators presented for instance in \cite[section 4., pp. 341-343]{PitYor} and \cite[Theorem 1]{BPY}. 
\item[3)] We make the connection with the occupation time of wide class of stochastic processes modelled on those defined by Lamperti in \cite{Lamperti!} and including $\{X_nX_{n+1}\}_{n\geq 0}$.
\end{enumerate}

\section{Proof of Theorem \ref{scalingbrownian}}

\setcounter{equation}{0}

\label{sectionl\'{e}vy}

We begin with the case of a L\'{e}vy motion as a limit process. The proofs follow more or less  the same lines depending on the index of stability and therefore we only insist on the most difficult -- and never exposed -- situation when $\alpha=1$. The proof in the other cases will be only outline.

\subsection{Sketch of the proof and preliminaries}

Let $N$ be the continuous time counting process associated with $T$ -- the increasing discrete time RW. The latter is introduced and represented -- as $M$ the skeleton RW -- in Section \ref{Settings}. More precisely, this renewal process is defined by 
\begin{equation}
N(t):=\max\{n\geq 0 : T_{n}\leq t\}=\inf\{n\geq 0 : T_{n+1}>t\}.	
\end{equation} 

The main idea of the proofs when $\alpha\in(1,2]$ consists of taking advantage of the decomposition 
\begin{equation}\label{decompoini}
S_{\lfloor u\,t\rfloor}-{\mathbf m_{\scr S}\,ut}=:\left(M_{N(u\,t)}-{\mathbf m_{\scr S}}T_{N(u\,t)}\right)+{R}(ut)=:{C}_{N(u\,t)}+ {R}(u\,t),	
\end{equation}
where $\{{C}_{n}\}_{n\geq 0}$ denotes the RW  whose jumps are distributed as the random variable $\tau_{1}^{\mathtt c}$ given in  (\ref{cond10}) and $\{R(v)\}_{v\geq 0}$ is the corresponding residual continuous time process. Note that
\begin{equation}\label{boundresi}
|R(v)|\leq (1-\mathbf m_{\scr S})\tau_{N(v)+1}^{\ttu}+(1+\mathbf m_{\scr S})\tau_{N(v)+1}^{\ttd}.
\end{equation}
%Also, remember that $\mathbf m_{\scr S}=\mathbf d_{\scr S}$ when $1<\alpha\leq 2$. 
Actually, the proofs of  (\ref{scalingBM2000}) and (\ref{scalinggeneric}) is organized as follows.\\

{\bf Step 1.} Direct consequence of Assumption \ref{assglobal} and estimations (\ref{equi}) together with classical results on $\alpha$-stable distributions we observe that $C$ belongs to the domain of attraction of the $\alpha$-stable L\'{e}vy process $S_{\alpha,\beta}$ -- $B$ when $\alpha=2$ -- with the normalization function given by $a(u)$.\\

{\bf Step 2.} Applying the weak law of large numbers to $T$, we show that $\{N(ut)/s(u)\}_{t\geq 0}$ converge in probability towards the identity process $t\mapsto t$ as $u$ goes to infinity.\\

{\bf Step 3.} We prove that the normalization of the residual process $\{R(ut)/a(u)\}_{t\geq 0}$ converges  in probability towards the null process $t\mapsto 0$ for $u$ large enough.\\ 

Applying a classical continuous mapping argument as in \cite[Sect. 17., Chap. 3., pp. 143-150]{Billcvg} for instance -- or to be more explicit by using \cite[Theorem 13.2.2., p. 430, Chap. 13.]{WW} coupled with the continuous mapping \cite[Theorem 3.4.3., p. 86, Chap. 3.]{WW} -- we can deduce the required scaling limits with the normalizing function $\lambda(u)=a\circ s(u)$ and this ends the proof when $\alpha\in(1,2]$ assuming the three latter steps.

\subsection{Focus on the Cauchy situation}

Unfortunately, in the case when $\alpha=1$, the Step 1.\@ is not true -- with the corresponding limit $\mathfrak C$ -- if the persistence times are both integrable $\mathbf d_{\scr T}<\infty$. Besides, we also need to be careful about the Step 2.\@ if $\mathbf d_{\scr T}=\infty$ since the weak law of large numbers does not hold in that case. Therefore, we need to adapt the reasoning above according the two latter situations.\\

First, when the persistence times are both of infinite mean, we can see that the proof follows exactly the same steps as previously by noting that Step 2. remains valid. Indeed, in any case $T$ is relatively stable in the sense of \cite[Chap. 8.8, p. 372]{BGT} with the inverse of $u\mapsto s(u)$ as normalizing function.

\begin{lem}[Step 2.]\label{Step2}
\label{lemmecount} The following convergences hold in probability: for any $v\geq 0$, 
\begin{equation}
\sup_{0\leq t\leq v}\left|\frac{N( u\,t)}{s(u)}- t\right|\;\xrightarrow[u\to\infty]{\mathbb P}\;0.
\end{equation}
\end{lem}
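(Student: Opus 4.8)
We need to show that the normalized counting process $N(ut)/s(u)$ converges in probability, uniformly on compacts, to the identity $t \mapsto t$. This is a functional weak law of large numbers for a renewal process.

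**Setup.** The counting process $N(t) = \max\{n \geq 0 : T_n \leq t\}$ is the inverse of the increasing random walk $T_n = \sum_{k=1}^n \tau_k$, where $\tau_k = \tau_k^\ttu + \tau_k^\ttd$. The mean drift context tells us that when $\alpha = 1$ with $\mathbf d_{\scr T} = \infty$, the sum $T$ is relatively stable. The function $s(u)$ is defined (via (3.4.2)) as an inverse-type normalization, so that $s$ is essentially the inverse of the function $u \mapsto \Theta \circ a(u) \cdot u$, which is comparable to $\mathbb E[T_n]$ truncated appropriately.

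**The inversion argument.** The key relationship is the switching identity $\{T_n \leq t\} = \{N(t) \geq n\}$. The standard strategy for a functional LLN of this form proceeds by first establishing the LLN for $T$ itself, then transferring it to its inverse $N$.

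Let me sketch the plan in proof form.

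\medskip

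The plan is to deduce the functional weak law of large numbers for $N$ from the corresponding statement for the increasing random walk $T$, exploiting that $N$ is (up to the elementary floor/inversion adjustments) the right-continuous inverse of $T$.

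First I would establish the pointwise weak law of large numbers for $T$. Since we are in the case $\mathbf d_{\scr T}=\infty$, the law of large numbers in its naive form fails, but by the discussion preceding the lemma the random walk $T$ is relatively stable in the sense of \cite[Chap. 8.8, p. 372]{BGT}, with $\sigma(u):=s^{-1}(u)$ (the inverse of $s$, equivalently $u\mapsto\Theta\circ a(u)\,u$) as normalizing function. Relative stability means precisely that
\begin{equation}\label{relstab}
\frac{T_n}{\sigma(n)}\;\xrightarrow[n\to\infty]{\PP}\;1.
\end{equation}
Because $\sigma$ is regularly varying of index one and monotone, \eqref{relstab} self-improves, by a standard monotonicity-plus-continuity argument, to the uniform-on-compacts statement: for every $v\geq 0$,
\begin{equation}\label{Tunif}
\sup_{0\leq t\leq v}\left|\frac{T_{\lfloor s(u)\,t\rfloor}}{u}-t\right|\;\xrightarrow[u\to\infty]{\PP}\;0,
\end{equation}
where I have used that $\sigma\circ s(u)\sim u$ by definition of $s$ as the generalized inverse of $\sigma$.

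Next I would transfer \eqref{Tunif} to the inverse process $N$. The switching identity $\{T_n\leq t\}=\{N(t)\geq n\}$ shows that $t\mapsto N(t)$ and $n\mapsto T_n$ are mutually inverse monotone functions. The inversion map is continuous (in the uniform-on-compacts topology) at any limit that is continuous and strictly increasing, and here the limit $t\mapsto t$ is exactly such. Concretely, fixing $\varepsilon>0$ and $v\geq 0$, the event $\{N(ut)/s(u)>t+\varepsilon\}$ is, by the switching identity, the event $\{T_{\lceil s(u)(t+\varepsilon)\rceil}\leq ut\}$; applying \eqref{Tunif} at the time $t+\varepsilon$ shows $T_{\lfloor s(u)(t+\varepsilon)\rfloor}/u\to t+\varepsilon>t$ in probability, so this event has vanishing probability uniformly in $t\in[0,v]$. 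The symmetric lower bound $\{N(ut)/s(u)<t-\varepsilon\}$ is handled identically. A finite-net argument over $[0,v]$, using the monotonicity of $N$ to control the oscillation between grid points, then upgrades pointwise control to the uniform supremum, yielding the claim.

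The main obstacle I anticipate is justifying the upgrade from the pointwise relative-stability statement \eqref{relstab} to the uniform functional form \eqref{Tunif}, and then the careful handling of the inversion when $\mathbf d_{\scr T}=\infty$: one cannot simply invoke a classical renewal theorem, and must instead rely on the regular variation (index one) of the normalization $\sigma$ together with the monotonicity of both $T$ and $N$. The integrable case $\mathbf d_{\scr T}<\infty$ is easier since then $s(u)\sim u/\mathbf d_{\scr T}$ and the ordinary functional LLN applies, so the real work is confined to the relatively stable regime.
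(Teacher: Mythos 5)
Your proposal is correct, and it follows the paper's overall architecture — reduce to the law of large numbers for $T$ via the switching identity, identify the pointwise limit through relative stability of $T$ (with $s(u)\sim u/\mathbf d_{\scr T}$ in the integrable case and the regularly varying normalization $u\mapsto\Theta\circ a(u)\,u$ otherwise), then invert back to $N$ — but it diverges at the one step that carries real content, namely the upgrade from pointwise to locally uniform convergence in probability. The paper handles this by viewing $t\mapsto T_{\lfloor s(u)t\rfloor}/u$ as a process with independent increments, computing its semimartingale characteristics $(b_u,\tilde c_u,\nu_u)$, and invoking the Jacod--Shiryaev convergence theorems to get ${\rm J}_1$-convergence in distribution to the deterministic identity, hence convergence in probability. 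You instead exploit that $t\mapsto T_{\lfloor s(u)t\rfloor}/u$ is non-decreasing and the limit $t\mapsto t$ is continuous and strictly increasing, so a finite-grid-plus-monotonicity (P\'olya-type) argument upgrades pointwise convergence in probability to the uniform statement, and the same monotonicity handles the inversion to $N$. Your route is more elementary and self-contained — it avoids the semimartingale machinery entirely and makes transparent why the deterministic, monotone structure is what drives the result — while the paper's route is heavier but plugs into a general framework that would also cover non-monotone increment processes. Both arguments are sound; the only detail worth spelling out in yours is the finite-net step (choose $0=t_0<\dots<t_m=v$ with mesh $<\varepsilon$ and use monotonicity between grid points), which is standard.
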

 
 \begin{proof}
Using the switching identity we first remark that it suffices to show that
\begin{equation}\label{Break}
\sup_{0\leq t\leq v}\left|\frac{T_{\lfloor s(u)t\rfloor}}{u}- t\right|\;\xrightarrow[u\to\infty]{\mathbb P}\;0.
\end{equation}
%Indeed,  the switching identity implies that for any $v\geq 0$ and $\delta>0$,
%\begin{equation*}
%\sup_{0\leq t \leq v}\left(\frac{N(u\,t)}{s(u)}- t\right)>\delta
%\;\Longrightarrow\;
%\inf_{0\leq t \leq v+\delta}\left(\frac{T_{\lfloor s(u)t\rfloor}}{u}- t\right)\leq -\delta,
%\end{equation*}
%and if in addition $\delta<v$, it leads to
%\begin{equation*}
%\inf_{0\leq t \leq v}\left(\frac{N(u\,t)}{s(u)}- t\right)<-\delta
%\;\Longrightarrow\;
%\sup_{0\leq t \leq v-\delta}\left(\frac{T_{\lfloor  s(u)t\rfloor}}{u}- t\right)\geq \delta.
%\end{equation*}

When $\mathbf d_{\scr T}<\infty$ the convergence in (\ref{Break}) at any fixed time $t$ but without the supremum follows immediately from the weak law of large numbers since $s(u)$ can be chosen to be equal to $u/\mathbf d_{\scr T}$. 

When $\mathbf d_{\scr T}=\infty$, the same marginal convergence is a direct consequence of the considerations on the tail distribution $\tail(t)$ of $T$ in Lemma \ref{lemmeequi} and classical results on stable distributions -- especially \cite[p. 174]{Geluk} -- which imply the convergence in distribution as $u$ tends to infinity of 
\begin{equation}
\frac{T_{\lfloor u\,t\rfloor}}{a(u)}- \frac{\Theta\circ a(u)\,u}{a(u)} t.
\end{equation}

Thus, in both cases, it only remains to prove the local uniform convergence in probability. To this end, it is well-known for stable distributions that the convergence of one marginal distribution implies its functional counterpart. Here the same is true. One way to prove this is to use the characteristics of semi-martingales which are in addition processes with independent increments. More precisely, by using \cite[Theorem 2.52, Chap. VII.2, p. 409]{Jacod} and \cite[Theorem 3.4, Chap. VII.3, p. 414]{Jacod} we only need to prove that the characteristics $(b_u,\tilde c_u,\nu_u)$ relative to a truncation function $h$ of $t\mapsto T_{\lfloor s(u)t\rfloor}/u$ tends to $(t\mapsto t,0,0)$ as $u$ tends to infinity uniformly on compact subset. Following \cite[Theorem 3.11, Chap. II.3, p. 94]{Jacod} and \cite[Equation 3.18, Chap. II.3, p. 96]{Jacod} it turns out that
\begin{equation}\label{Char3}
b_{u}(t)=\mathbb E\left[h\left(\frac{\tau_1}{u}\right)\right]\lfloor u\,t\rfloor
,\quad 
\tilde c_{u}(t)=\mathbb V\left[h\left(\frac{\tau_1}{u}\right)\right]\lfloor u\,t\rfloor\quad
\mbox{and}\quad
\nu_u([0,t]\times g):=\mathbb E\left[g\left(\frac{\tau_1}{u}\right)\right]\lfloor ut\rfloor,
\end{equation}
where $\tau_1$ is a jump of $T$ and $g$ is any bounded continuous function bounded by $x\mapsto x^2$ in a neighbourhood of the origin. Then the punctual convergence of this triplet follows from the convergence for $t=1$. Furthermore, their uniform convergence on compact sets are obvious and we deduce the convergence in distribution of the associated processes in the Skorokhod space endowed with the $\rm J_{1}$-Borel $\sigma$-field. Since the convergence in distribution to a constant in metric spaces implies the convergence in probability, the lemma is proved.
\end{proof}

By contrast, when the persistence times are both integrable, we will see that we need to adjust the first step and more precisely the decomposition (\ref{decompoini}) involving the jump distribution $\tau_1^\mathtt c$ which is no longer well balanced. This dichotomy appears in the choice of the centering term in the classical convergence towards a symmetric Cauchy process. 

\begin{lem}[Step 1. when $\alpha=1$]\label{lemmecenter} Let $W$ be a random walk whose jump  $w_{1}$ belongs to the domain of attraction of a symmetric Cauchy distribution. Then the centering term can be chosen to be equal to zero, when $w_1$ is not integrable, and to the drift otherwise,  in such way that 
\begin{equation}\label{cauchy}
\left\{\frac{W_{\lfloor u\,t\rfloor}}{k(u)}\right\}_{t\geq 0}\quad\mbox{or}\quad  \left\{\frac{W_{\lfloor u\,t\rfloor}-\mathbb E[w_{1}]\,u\,t}{k(u)}\right\}_{n\geq 0}  \;\xRightarrow[u\to\infty]{{\rm J_{1}}}\; 
\left\{{\mathfrak C}(t)\right\}_{t\geq 0},
\end{equation}	
according to the situations. Here we  denote by $k(u)$ the suitable normalizing function so that the limit process is the symmetric Cauchy process defined previously. 
\end{lem}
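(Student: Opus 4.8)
The plan is to follow the method already used for Lemma \ref{lemmecount}, namely to establish the functional ${\rm J}_1$ convergence through the convergence of the semimartingale characteristics of the rescaled walk, and to concentrate all the difficulty on the single point that is genuinely new at the Cauchy threshold: the choice of the centering. Writing $c(u):=0$ in the non-integrable case and $c(u):=\mathbb E[w_1]\,u$ in the integrable one, I would regard $t\mapsto (W_{\lfloor u t\rfloor}-c(u)\,t)/k(u)$ as a process with independent increments and compute its characteristics $(b_u,\tilde c_u,\nu_u)$ relative to a truncation function $h$, exactly as in \eqref{Char3}. Since the target process $\mathfrak C$ is a purely discontinuous symmetric $1$-stable L\'evy process, it suffices, by the convergence criteria \cite[Chap.~VII]{Jacod} already invoked for Lemma \ref{lemmecount}, to prove that $\tilde c_u\to 0$, that $\nu_u([0,t]\times\,\cdot\,)\to t\,\nu_{\mathfrak C}$ and that $b_u(t)\to 0$, all uniformly on compact time sets; the marginal statement at $t=1$ is then a by-product and the self-similarity of $\mathfrak C$ makes the linear centering consistent across increments.

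The convergence of the jump characteristic $\nu_u$ towards the symmetric Cauchy L\'evy measure and the vanishing of the modified second characteristic $\tilde c_u$ follow from $w_1\in{\rm D}(1)$ together with the classical regular variation estimates of \cite{BGT,Feller,Geluk}: the hypothesis that the attracting law is a \emph{symmetric} Cauchy distribution forces the balance of the two regularly varying tails, which is precisely what renders $\nu_{\mathfrak C}$ symmetric. These two verifications are formally identical to those performed in the generic case $\alpha\in(1,2)$, the normalising function $k(u)$ being fixed by the defining relation $n\,\PP(|w_1|>k(u))\to \text{const}$.

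The heart of the matter, and the main obstacle, is the drift characteristic $b_u(t)$ --- equivalently the imaginary part of $n\log\varphi(\theta/k(u))$, where $\varphi$ is the characteristic function of $w_1$ --- in which the $\alpha=1$ anomaly is concentrated. I would expand $n\,\mathbb E[h(w_1/k(u))]$ and show in each regime that it tends to $0$ with the announced centering. In the non-integrable case this amounts to proving that the truncated first moment $n\,\mathbb E[w_1\ind{|w_1|\le k(u)}]$ is $o(k(u))$, whereas in the integrable case it amounts to replacing the canonical truncated-mean centering by $\mathbb E[w_1]\,u\,t$, \emph{i.e.}\@ to proving $n\,\mathbb E[w_1\ind{|w_1|> k(u)}]=o(k(u))$. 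Both estimates hinge on the cancellation between the two tails enforced by the balance condition: the \emph{absolute} first moment is exactly of the critical order $k(u)/n$, so only its \emph{signed}, balanced counterpart is negligible. This cancellation is automatic for $\alpha\in(1,2)$, where $w_1$ is absolutely integrable, and empty for $\alpha\in(0,1)$, where no first moment exists; its delicate knife-edge character at $\alpha=1$ is exactly why the Cauchy case had to be isolated. Once $b_u(t)\to 0$ is secured, the characteristics criterion of \cite[Chap.~VII]{Jacod} delivers the ${\rm J}_1$ functional convergence, hence \eqref{cauchy}.
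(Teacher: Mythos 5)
Your architecture is sound and, modulo packaging, runs parallel to the paper's: both routes reduce the lemma to a single estimate, namely that the \emph{signed} truncated first moment $\E[w_1\ind{|w_1|\le k(u)}]$ (non-integrable case), respectively the signed tail moment $\E[w_1\ind{|w_1|> k(u)}]$ (integrable case), is $o(k(u)/u)$. (The paper obtains the functional statement from the marginal one at $t=1$ via \cite{Geluk} and a Slutsky argument rather than via Jacod's characteristics; that difference is cosmetic.) The gap is that you do not prove this estimate, and the justification you offer rests on a false premise. You write that ``the absolute first moment is exactly of the critical order $k(u)/n$, so only its signed, balanced counterpart is negligible.'' It is not: writing $\PP(|w_1|>t)=L(t)/t$ with $L$ slowly varying, one has $\E[|w_1|\ind{|w_1|\le t}]\asymp\int_1^t L(s)s^{-1}\,ds$, which is generically of \emph{strictly larger} order than the target $t\,\PP(|w_1|>t)=L(t)$ (for the standard Cauchy: $\log t$ versus a constant). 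Consequently the naive cancellation argument --- integrate the balance relation $\PP(w_1>s)-\PP(-w_1>s)=o(\PP(|w_1|>s))$ over $[0,t]$ --- only yields $o\bigl(\int_0^t\PP(|w_1|>s)\,ds\bigr)$, which is \emph{not} $o(t\,\PP(|w_1|>t))$ and hence does not make the drift characteristic $b_u$ vanish. Naming the knife-edge at $\alpha=1$ is not the same as crossing it.

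The paper's proof supplies the missing mechanism, and it is a genuinely second-order argument. It introduces comparison points $\lambda^{\pm}(t)$ defined by $\int_0^t\PP(\pm w_1>s)\,ds=\frac12\int_0^{\lambda^{\pm}(t)}\PP(|w_1|>s)\,ds$ (tail integrals in the integrable case), shows $\lambda^{\pm}(t)/t\to1$ from the tail balance, and then invokes de Haan's theory of the class $\Pi$ (\cite[Theorem 3.7.3]{BGT} together with \cite[Theorem 3.1.16]{BGT}) to get
\begin{equation*}
\int_0^{\lambda^{+}(t)}\PP(|w_1|>s)\,ds-\int_0^{\lambda^{-}(t)}\PP(|w_1|>s)\,ds\;\sim\;t\,\PP(|w_1|>t)\,\log\frac{\lambda^{+}(t)}{\lambda^{-}(t)}\;=\;o\bigl(t\,\PP(|w_1|>t)\bigr).
\end{equation*}
Some argument of this de Haan type is indispensable --- it is exactly the content that distinguishes the Cauchy case from $\alpha\in(1,2)$, where absolute integrability makes the cancellation automatic --- and without it your treatment of $b_u(t)$ does not close. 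Your verifications of $\tilde c_u\to0$ and of the convergence of $\nu_u$ are fine and match the generic case, as you say.
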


\begin{proof}
First, we can show by using \cite[pp. 170-174]{Geluk} that a suitable centering term (in the numerator above) to get the marginal convergence at time $t=1$  above is
\begin{equation}
b(u):=u\left(\int_0^{k(u)}\mathbb P(+w_1>s)ds- \int_0^{k(u)}\mathbb P(-w_1>s)ds\right),
\end{equation}
which can be rewritten when $w_1$ is integrable as 
\begin{equation}
b(u):=u\left[\mathbb E[w_1]-\left(\int_{k(u)}^\infty\mathbb P(w_1>s)ds- \int_{k(u)}^\infty\mathbb P(-w_1>s)ds\right)\right].
\end{equation}
In fact, the convergence holds when $u$ runs through the integers in most of the papers but  since $b(u)$ and $k(u)$ are regularly varying (indexes $0$ and $1$ respectively) a Slutsky type argument implies the convergence for $u$ along the real numbers. Besides, it is well known that the functional convergence is a consequence of the marginal at $t=1$. 

To go further, we can see that for $t$ sufficiently large there exist $\lambda^{\pm}(t)$  such that
\begin{equation}\label{dehaan}
\int_{t}^{\infty}\mathbb P(\pm w_{1}> s)ds =\frac{1}{2} \int_{\lambda^{\pm}(t)}^{\infty} \mathbb P(|w_{1}|> s)ds\quad
\mbox{or}\quad
\int_{0}^{t}\mathbb P(\pm w_{1}> s) ds =\frac{1}{2} \int_{0}^{\lambda^{\pm}(t)} \mathbb P(|w_{1}|> s)ds,	
\end{equation}
depending on whether $w_1$ is integrable or not. Moreover, since the right tail and the left tail of $w_1$ are well balanced, standard results on regularly functions (in particular those of slowly variations) and their inverse allow us to see that $\lambda^{\pm}(t)/t$ tend to $1$ as $t$ goes to infinity. Furthermore, noting that the two-sided tail of $w_1$ is regularly varying of index $1$, the de Haan theory applies -- especially \cite[Theorem 3.7.3, pp. 162-163]{BGT} coupled with \cite[Theorem 3.1.16, p. 139]{BGT} -- and with (\ref{dehaan}) it implies that   
\begin{equation}
{\int_{u}^\infty\mathbb P(w_1>s)ds- \int_{u}^\infty\mathbb P(-w_1>s)ds}\quad{\mbox{or}\quad \int_0^{u}\mathbb P(w_1>s)ds- \int_0^{u}\mathbb P(-w_1>s)ds},
\end{equation}
depending on whether $w_1$ is integrable or not, is negligible with respect to ${u\,\mathbb P(|w_1|>u)}$ as $u$ goes to infinity. To conclude, it suffices to note that $u\,\mathbb P(|w_1|>k(u))$ is equivalent to $1$ in a neighbourhood of infinity and thus by a Slutsky argument we deduce convergences (\ref{cauchy}).
\end{proof}

Therefore, when the persistence times are both of infinite mean, one can see the first step above holds with a symmetric Cauchy process for limit since $\tau_1^{\mathtt c}$ is well balanced and the centering term is null. This is not the case when the running times are both integrable. To overcome those difficulties, we write in place of the decomposition (\ref{decompoini}) the more adapted relation
\begin{equation}
S_{\lfloor u\,t\rfloor}-{\mathbf m_{\scr S}\,u\,t}=:
\left(S_{\lfloor u\,t\rfloor}-{\mathbf b_{\scr S}\,u\,t}\right) - (\mathbf d_{\scr S}-\mathbf b_{\scr S})\,u\,t=:C_{N(u\,t)}^{\mathtt o}-(\mathbf d_{\scr S}-\mathbf b_{\scr S})\,u\,t + {R^{\mathtt o}}(u\,t).
\end{equation}
We recall that $\mathbf m_{\scr S}=\mathbf d_{\scr S}$ in this situation. Here $C^{\mathtt o}$ denotes the RW associated with the (well balanced) jump $\tau_{1}^{\mathtt o}:=(1-\mathbf b_{\scr S})\tau_{1}^{\ttu}-(1+\mathbf b_{\scr S})\tau_{1}^{\ttd}$ and ${R^{\mathtt o}}$ is a residual process satisfying the same upper bound as the previous $R$ replacing $\mathbf m_{\scr S}$ by $\mathbf b_{\scr S}$. Note that $\tau_{1}^{\mathtt o}$ is in the domain of attraction of a symmetric Cauchy distribution -- a direct consequence of Lemma \ref{lemmeequi} -- and has for expectation 
\begin{equation}
\mathbb E[\tau_{1}^{\mathtt o}]=\mathbf d_{\scr T}(\mathbf d_{\scr S}-\mathbf b_{\scr S}).
\end{equation}
As a consequence -- since we can take $s(u):=u/\mathbf d_{\scr T}$ -- we deduce from Lemmas \ref{Step2} and \ref{lemmecenter} and that 
\begin{equation}
 \left\{\frac{C_{N(u\,t)}^{\mathtt o}-(\mathbf d_{\scr S}-\mathbf b_{\scr S})\,u\,t}{a\circ s(u)}\right\}_{n\geq 0}  \;\xRightarrow[u\to\infty]{{\rm J_{1}}}\; 
\left\{{\mathfrak C}(t)\right\}_{t\geq 0}
\end{equation}

In any cases, to conclude, we need to show that the normalization of the residual term is negligible and it is done in the proof of the following

%This one satisfies a similar upper bound as (\ref{boundresi}) and more exactly 
%\begin{equation}\label{boundresi2}
%|R^{\mathtt o}(v)|\leq (1-\mathbf b_{\scr S})\tau_{N(v)+1}^{\ttu}+(1+\mathbf b_{\scr S})\tau_{N(v)+1}^{\ttd}.
%\end{equation}

\begin{lem}[Step 3.]
The following convergences hold in probability: for any $v\geq 0$, 
\begin{equation}
\sup_{0\leq t\leq v}\left|\frac{R( u\,t)}{a\circ s(u)}\right|\;\xrightarrow[u\to\infty]{\mathbb P}\;0\quad \mbox{and}\quad \sup_{0\leq t\leq v}\left|\frac{R^{\mathtt o}( u\,t)}{a\circ s(u)}\right|\;\xrightarrow[u\to\infty]{\mathbb P}\;0.
\end{equation}
\end{lem}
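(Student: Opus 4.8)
The plan is to prove the negligibility of the residual terms by combining the explicit bound \eqref{boundresi}, the control of the counting process from Lemma \ref{Step2}, and the fact that a single jump of an $\alpha$-stable domain of attraction is negligible compared to the normalization of the whole walk. First I would recall that from \eqref{boundresi} we have $|R(v)|\leq (1-\mathbf m_{\scr S})\tau_{N(v)+1}^{\ttu}+(1+\mathbf m_{\scr S})\tau_{N(v)+1}^{\ttd}$, and similarly for $R^{\mathtt o}$ with $\mathbf m_{\scr S}$ replaced by $\mathbf b_{\scr S}$. Hence both residuals are bounded above by a (deterministic multiple of a) single persistence time evaluated at the random index $N(v)+1$. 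It therefore suffices to show that
\begin{equation}\label{planresid}
\sup_{0\leq t\leq v}\frac{\tau_{N(u\,t)+1}^{\ell}}{a\circ s(u)}\;\xrightarrow[u\to\infty]{\mathbb P}\;0,\quad \ell\in\{\ttu,\ttd\}.
\end{equation}

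Second, I would reduce the supremum over $t$ to a supremum over the indices of the underlying i.i.d.\@ sequence. Since $N(u\,t)$ is non-decreasing in $t$, as $t$ ranges over $[0,v]$ the index $N(u\,t)+1$ ranges over integers between $1$ and $N(u\,v)+1$, so the left-hand side of \eqref{planresid} is dominated by $\max_{1\leq k\leq N(u\,v)+1}\tau_k^{\ell}/(a\circ s(u))$. By Lemma \ref{Step2} the process $N(u\,t)/s(u)$ converges in probability to the identity, so $N(u\,v)$ is, with probability tending to one, at most $(v+\epsilon)\,s(u)$ for any fixed $\epsilon>0$. Thus it is enough to bound $\max_{1\leq k\leq K(u)}\tau_k^{\ell}$ with $K(u):=\lfloor (v+\epsilon)\,s(u)\rfloor+1$, a \emph{deterministic} number of i.i.d.\@ terms. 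I would make this reduction rigorous by conditioning on the high-probability event $\{N(u\,v)\leq K(u)\}$ and using that the remaining event has vanishing probability.

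Third, I would establish the classical estimate that the maximum of the first $n$ i.i.d.\@ copies of a summand in the domain of attraction of an $\alpha$-stable law is negligible compared with the scaling sequence $a(n)$ of the sum. Concretely, for any $\delta>0$,
\begin{equation}
\mathbb P\left(\max_{1\leq k\leq n}\tau_k^{\ell}>\delta\,a(n)\right)\leq n\,\mathbb P\left(\tau_1^{\ell}>\delta\,a(n)\right)\longrightarrow 0,
\end{equation}
which holds because $\mathbb P(\tau_1^{\ell}>x)$ is (at most) regularly varying of index $-\alpha$ while $a(n)$ is chosen exactly so that $n\,\mathbb P(|\tau_1^{\mathtt c}|>a(n))$ stays bounded; the tails of $\tau_1^{\ell}$ are, up to the comparisons furnished by Lemma \ref{lemmeequi} and Lemma \ref{lemmebound}, controlled by those of $\tau_1^{\mathtt c}$. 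Applying this with $n=K(u)$ and using that $a\circ s(u)$ is equivalent to $a(K(u))$ up to the slowly varying corrections recorded in Lemma \ref{cauchylemme} (and their analogues for $\alpha\in(1,2]$) yields \eqref{planresid}. The same argument with $\mathbf b_{\scr S}$ in place of $\mathbf m_{\scr S}$ disposes of $R^{\mathtt o}$.

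The main obstacle, I expect, is the bookkeeping in the third step: one must match the scaling $a\circ s(u)$ of the \emph{time-changed} walk with the scaling $a(K(u))$ appropriate to $K(u)$ i.i.d.\@ terms, and verify that the tail of the individual persistence time $\tau_1^{\ell}$ is genuinely dominated by the regularly-varying tail governing $a$. This is delicate precisely in the Cauchy regime $\alpha=1$, where $a\circ s(u)$ carries an extra slowly varying factor $\Xi_1\circ s(u)$ and where, when $\mathbf d_{\scr T}=\infty$, $s(u)$ is itself only relatively stable rather than linear; here the estimates of Lemma \ref{cauchylemme} are essential to guarantee that the single-jump contribution is of strictly smaller order than the normalization. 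Once these regular-variation comparisons are in hand, the continuous mapping and Slutsky arguments invoked in the sketch close the proof of Theorem \ref{scalingbrownian}.
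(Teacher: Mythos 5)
Your opening moves coincide with the paper's: bound $|R|$ and $|R^{\mathtt o}|$ by the persistence time of the current run via (\ref{boundresi}), localize the random index $N(ut)+1$ using Lemma \ref{Step2}, and reduce to a maximum of i.i.d.\@ terms. The gap is in your third step. The displayed claim
\begin{equation*}
n\,\mathbb P\bigl(\tau_1^{\ell}>\delta\,a(n)\bigr)\longrightarrow 0
\end{equation*}
is false for $\alpha<2$. By construction $a(n)^{2}/\Sigma(a(n))^{2}\sim n$, and since $t^{2}\tail_{\mathtt c}(t)/V_{\mathtt c}(t)\to(2-\alpha)/\alpha$ while $\tail_{\ell}$ is comparable to $\tail_{\mathtt c}$ (Lemma \ref{lemmeequi}), one gets $n\,\mathbb P(\tau_1^{\ell}>\delta\,a(n))\to k\,\frac{2-\alpha}{\alpha}\,\delta^{-\alpha}$ with $k>0$ for at least one $\ell$ --- this is precisely the paper's own estimate (\ref{enfin3}). ``Bounded'' is not ``vanishing'': the maximum of $n$ i.i.d.\@ persistence times is of the \emph{exact} order of $a(n)$ (it has a non-degenerate Fr\'echet limit after division by $a(n)$). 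Your argument therefore closes only in the Gaussian case $\alpha=2$, where the constant $(2-\alpha)/\alpha$ is zero.

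This is not a repairable bookkeeping issue. Your (correct) observation that the supremum over $t\in[0,v]$ visits \emph{all} indices $1,\dots,N(uv)+1$ --- roughly $(v+\delta)s(u)$ of them --- combined with the true limit in (\ref{enfin3}) gives $\liminf_{u}\mathbb P\bigl(\sup_{t\le v}\tau^{\ell}_{N(ut)+1}>\ep\, a\circ s(u)\bigr)\ge 1-\exp(-c\,v\,\ep^{-\alpha})>0$ for $\alpha\in[1,2)$: the quantity to be proved negligible is not negligible in uniform norm. Note for comparison that the paper's bound (\ref{enfin1}) uses the exponent $2\delta s(u)$, which is the size of the window of indices compatible with a \emph{single} time $t$ on $\Omega_{u,\delta}$, not the cardinality of the union of these windows over $t\in[0,v]$; it is only this smaller exponent that makes the final bound vanish as $\delta\to0$, so your honest count exposes rather than resolves the difficulty. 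There is no cancellation inside $R$ to save the argument either: at the bottom of the $(n+1)$th run one has $R=-(1+\mathbf m_{\scr S})\tau^{\ttd}_{n+1}$ exactly. Any complete proof for $\alpha\in[1,2)$ must exploit the fact that these large excursions of $R$ occupy rescaled time intervals of vanishing length (i.e.\@ argue at fixed times, or in a weaker topology), not their uniform norm; the single-maximum bound cannot deliver the statement as written.
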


\begin{proof}
First note that whatever $\alpha\in[1,2]$ it suffices to prove that for any $\ell\in\{\ttu,\ttd\}$,
\begin{equation}\label{residtau}
\sup_{0\leq t\leq v}\frac{\tau_{N(u\,t)+1}^{\ell}}{a\circ s(u)}\xrightarrow[u\to\infty]{\mathbb P}0.
\end{equation}	

It follows from Lemma \ref{lemmecount} above that for any $\delta>0$, there exists $s\geq 0$ such that for all $u\geq s$, $\mathbb P(\Omega_{u,\delta})\geq 1-\delta,$	with
\begin{equation}
\Omega_{u,\delta}:=\left\{\sup_{0\leq t \leq v}\left|\frac{N(ut)}{s(u)}- t\right|\leq \delta\right\}.
%=\bigcap_{0\leq t\leq v} \Big\{ N(ut)\in [s(u)(t-\delta),s(u)(t+\delta)]\Big\}.	
\end{equation}
Then, one can see for any $\varepsilon>0$ that
\begin{equation}\label{enfin1}
\mathbb P\left(\sup_{0\leq t \leq v} \frac{\tau_{N(u\,t)+1}^{\ell}}{a\circ s(u)}>\ep\;,\;\Omega_{u,\delta}\right)\leq
1-\left[1-\mathbb P(\tau_1^\ell>\ep \,a\circ s(u))\right]^{2\delta s(u)}.
\end{equation}
Furthermore, we obtain from the regular variations of the involved functions and Lemma \ref{lemmeequi} the existence of a positive constant $k$ depending only on $\mathbf m_{\scr S} $, $\mathbf b_{\scr S}$ and $\ell$ such that 
\begin{equation}\label{enfin3}
r \,\mathbb P(\tau_1^{\ell} >\ep a(r))
\underset{r\to\infty}{\sim}
k\left(\frac{2-\alpha}{\alpha}\right)\ep^{-\alpha}.	
\end{equation}
Therefore, we deduce from (\ref{enfin1}) and (\ref{enfin3})  that
\begin{equation}
\limsup_{u\to\infty}\;\mathbb P\left(\sup_{0\leq t \leq v} \frac{\tau_{N(u\,t)+1}^{\ell}}{a\circ s(u)}>\varepsilon\right)\leq \delta + \left[1-\exp\left(-2\delta k\left(\frac{2-\alpha}{\alpha}\right) \ep^{-\alpha}\right)\right],
\end{equation}
for any $\delta>0$, which achieves the proof of this lemma.
\end{proof}

Since the first step is obvious when $\alpha\in(1,2]$, this completes the proof of the L\'{e}vy situations, excepted Lemma \ref{cauchylemme} whose proof is given below.  
\hfill $\qed$

\begin{proof}[Proof of Lemma \ref{cauchylemme}]
First, one can see from Lemma \ref{lemmeequi} and standard results on regularly varying functions  that $\Xi_1(u):=a(u)\tail\circ a(u)$. Besides, the second asymptotic holds with $D(u):=\Theta\circ a\circ s(u)$. Then Karamata's Theorem  \cite[Proposition 1.5.9a., p. 26]{BGT} implies that $u\tail(u)$ is negligible with respect to $\theta(u)$ as $u$ goes to infinity and we deduce the lemma.
\end{proof}

%%%% ICI 29/09/16 

\section{Proof of Theorem \ref{complement}}

\setcounter{equation}{0}

\label{sectionanomalous}

We treat in this section the anomalous case, {\it i.e.\@} $\alpha\in(0,1)$. The proof is divided into four parts. First, we show the functional invariance principle -- Section \ref{functionalscalelim}. Then -- Section \ref{governingequation} -- we describe the marginal distributions of the process before we investigate -- Section \ref{markovprop} -- the Markov property given at the end of the theorem. Finally, we identify the marginal distributions in Section \ref{excursion} by the mean of the excursion theory but also -- Section \ref{Lampertipro} -- by extending the structure of Lamperti processes introduced in \cite{Lamperti!}.  

%%29/09/16 ICI

\subsection{Functional scaling limits}

\label{functionalscalelim}

First note that the tightness in (\ref{arcsinescaling}) is obvious since the modulus of continuity of $\{S_{u\,t}/u\}_{t\geq 0}$ is equal to $1$ almost surely. Hence, we only need to show the convergence of the finite-dimensional marginal distributions. To this end, we shall adapt the results in \cite{Magdziarz} for L\'{e}vy walks. In that paper, the authors consider L\'{e}vy walks of the form 
\begin{equation}
\mathfrak S(t):=\sum_{k=1}^{N(t)} \Lambda_k\, J_k + (t-T_{N(t)})\Lambda_{N(t)+1},
\end{equation}
with 
\begin{equation}
T_n:=\sum_{k=1}^n J_k\quad\mbox{and}\quad N(t):=\max\{k\geq 1 : T_k\leq t\}.
\end{equation}
Here the random moving times $J_k$ and the jumps $\Lambda_k\,J_k$ are {\it i.i.d.\@} -- the   $\Lambda_k$ being {\it i.i.d.\@} and independent of the jumping times and of unit size. Here, one can interpret $S$ -- the PRW -- as  such L\'{e}vy walk for which the random moving times are given  by $\tau_{2n-1}^\ttd$ or $\tau_{2n}^\ttu$ alternatively   and corresponding random jumps $-\tau_{2n-1}^\ttd$ or $\tau_{2n}^\ttu$. Hence, the directions are deterministic and equal to $-1$ or $1$ alternatively. 
%Also, one can see that the PRW is, up to some random variable, upper and lower bounded by  a L\'{e}vy walk having $\tau_n=\tau_n^\ttu+\tau_n^\ttd$ and $Y_n=\tau_n^\ttu-\tau_n^\ttd$ for random moving times and associated jumps. 

Even if their assumptions do not fit to this model, we shall see that their results extend easily to our situation. Indeed, the central  convergence \cite[Theorem 3.4., pp. 4023-4024]{Magdziarz} becomes in our situation
\begin{equation}
\left(\frac{M_{n}}{a(n)},\frac{T_{n}}{a(n)}\right)\;\xRightarrow[n\to\infty]{\mathcal L}\;(S_{\alpha}(1),T_{\alpha}(1)),
\end{equation}  
where $T_\alpha$ and $S_\alpha$ are the coupled $\alpha$-stable L\'{e}vy processes introduced to define the arcsine Lamperti anomalous diffusion. Thereafter, it is not difficult to check that the same continuous mapping and topological arguments used in the proof of \cite[Theorem 4.11., p. 4032]{Magdziarz} and its Corollary 4.14., p. 4033, hold to obtain the functional convergence (\ref{arcsinescaling}).

\subsection{About the governing equation}

\label{governingequation}

We observe that $\mathcal S_\alpha$ is the same scaling limit of a \say{true} L\'{e}vy walk -- in the sense of \cite{Magdziarz} -- whose random moving times and jumps are respectively given by  $\xi_n \tau_n^\ttu+(1-\xi_n) \tau_n^\ttd$ and $\xi_n \tau_n^\ttu-(1-\xi_n) \tau_n^\ttd$ with  {\it i.i.d.\@} random directions $2\xi_n-1$ independent of the running times and distributed as a Rademacher distribution of parameter $(1+\mathbf b_{\scr S})/2$ denoted in the following by $\mathcal X(d\ell)$.

\begin{rem}
One can see the latter L\'{e}vy walk as randomized version of the PRW which is somehow more convenient to study as it is already appeared for the recurrence and transience criteria in \cite{PRWI}.  
\end{rem}

It is also shown in \cite[Theorem 5.6., pp. 4036-4037]{Magdziarz}  that the density distribution $f_t(x)$ satisfies -- in a weak sense -- the fractional partial differential equation  (\ref{goveq}) since it can be write as
\begin{equation}
\int_{\mathcal A}\left(\frac{\partial }{\partial t}+\ell\,\frac{\partial }{\partial t}\right)^\alpha  f_t(x)\,\mathcal X(d\ell)=\frac{1}{\Gamma(1-\alpha)}\frac{1}{t^\alpha}\int_{\mathcal A}\delta_{\ell \,t}(dx)\,\mathcal X(d\ell).
\end{equation}
Besides, it follows from \cite[Section 5., p. 4035]{Magdziarz} that $f_t(x)$ can be expressed as   
\begin{equation}
f_t(x)=\frac{1}{\Gamma(1-\alpha)}\int_{\mathcal A}\int_0^t \frac{u_\alpha(x-\ell\,(t-s),s)}{(1-s)^\alpha}\,ds\,\mathcal X(d\ell),
\end{equation}
where $u_\alpha(x,t)$ denotes the $0$-potential  density of  $(S_\alpha,T_\alpha)$ -- the occupation time density -- and it comes (\ref{potential}). Furthermore, the Fourier-Laplace Transform (FLT) of $u_\alpha(x,t)$ can be computed explicitly  and it leads to the FLT of $\mathcal S_\alpha(1)$ given in equation (5.5) of  \cite{Magdziarz} -- equal here to
\begin{equation}
\frac{(s-iy)^{\alpha-1}+(s+iy)^{\alpha-1}}{\left(\frac{1+\mathbf b_{\scr S}}{2}\right)(s-iy)^\alpha+\left(\frac{1-\mathbf b_{\scr S}}{2}\right)(s+iy)^\alpha}.
\end{equation}
Taking the derivative  at the origin with respect to the spacial variable   we can see that  the mean distribution of $f_1(x)$ is equal to $\mathbf m_{\scr S}$. However, we have not been able to invert directly this FLT to get the expression (\ref{density}). 

\begin{rem}
By using the results in \cite{MeerStra} it may be possible to provide the same representations of  the finite-dimensional marginal distributions.
\end{rem}

\subsection{Markov property and continuous time VLMC}

\label{markovprop}

Here we show that $(\mathcal X_\alpha,\mathcal A_\alpha)$ is Markovian. Let $\mathbb P_0^{\alpha}$ be the distribution of such stochastic process. First, from the regenerative property of  a stable subordinator we get that $\{(\mathcal X_{\alpha}(t+s),\mathcal A_{\alpha}(t+s))\}_{t\geq 0}$ is distributed  as $\mathbb P_0^{\alpha}$ for any $s\in\mathcal R_\alpha$. Furthermore -- by using \cite[Lemma 1.10, p. 15]{Bertoin} -- one can see that
\begin{equation}
\mathbb P(\mathcal H_{\alpha}(t)\in dh\mid\mathcal A_{\alpha}(t)=a)=\frac{\alpha\,a^{\alpha}}{(a+h)^{\alpha+1}}\,dh,	
\end{equation}
for any $t>0$ and we denote this homogeneous kernel by $N(a;dh)$. Then we introduce for any $\ell\in\{-1,1\}$ and $a\geq 0$ the distribution $\mathbb H^{\alpha}_{(\ell,a)}$ of the process on $\{-1,1\}\times[0,\infty)$ equal to $t\mapsto (\ell,a+t)$ up the random time $H_a$ distributed as $N(a;dh)$ and whatever elsewhere. Therefore, by construction and the regenerative property, it turns out  that the distribution of the stochastic process $\{(\mathcal X_{\alpha}(t+s),\mathcal A_{\alpha}(t+s))\}_{t\geq 0}$ is -- almost surely -- the push-forward image of $\mathbb H^{\alpha}_{(\mathcal X_\alpha(s),\mathcal A_\alpha(s))}\otimes \mathbb P_0^{\alpha}$ under the gluing map $G$ defined by
\begin{equation}
G(w_1,w_2):= w_1(t)\mathds 1_{\{t< H_a\}}+w_2(t)\mathds 1_{\{t>H_a\}},
\end{equation}
The Markov property is then a simple consequence of this representation. 

As a consequence, one  can see $\mathcal X_\alpha$ as a continuous time VLMC. Besides, it is well known -- see \cite[Chap. 8.6]{BGT} for instance -- that the distribution of the couple formed with the age time and the remaining time $(\mathcal A_\alpha(t),\mathcal H_\alpha(t))$ can be computed explicitly and some scaling limits of this couple leads to other generalized arcsine distributions. Moreover, one can easily be convinced using  \cite[Theorem (3.2), p. 506]{Maisonneuve} or the results in  \cite{Fitz,Kaspi}) that  this process admit the invariant (infinite) measure given by   
\begin{equation}
\int_{\mathcal A}\int_{0}^\infty G_\ast (\mathbb H^{\alpha}_{(\ell,a)}\otimes \mathbb P_0^{\alpha})\frac{1}{a^{\alpha+1}}\,da \, \mathcal X(d\ell),
\end{equation}
where $G_\ast\mu$ is the push-forward image of a distribution $\mu$ by the map $G$. One can note that this measure is not a probability and that to be compared with the result \cite{peggy} stating that a the double-comb PRW admits an invariant probability measure if and only if the persistence times are both integrable.

\subsection{Arcsine Lamperti density}

To conclude the proof of Theorem \ref{complement} it remains to 
we compute explicitly the density expression. The first proof uses the theory of excursion whereas the second one generalizes the concept of Lamperti and is a more general result.

\subsubsection{Via the excursion theory}

\label{excursion}

We deduce from the integral representation (\ref{limitintegral}) that \cite[section 4., pp. 341-343]{PitYor} holds and more precisely that \cite[Theorem 1]{BPY}  apply to our situation. Hence, we get the density expression, making in addition the link with the distribution of the occupation time in the positive half-line of a skew Bessel process of dimension $2-2\alpha$ and skewness parameter $(1+\mathbf b_{\scr S})/2$.

\subsubsection{A second class of Lamperti processes}

\label{Lampertipro}

Lamperti investigate in \cite{Lamperti!} the general question of the distribution of the  occupation time of a set $A$ for some stochastic process $\{\mathbf X_n\}_{n\geq 0}$ on a state space $E$. Regarding the dynamic, it is surprisingly  only assumed that $E$ can be divided into two sets -- the aforementioned $A$ and an other one $B$ -- plus a recurrent state $\sigma$ in such a way that for any $n\geq 1$, if $\mathbf X_{n-1}\in A$ and $\mathbf X_{n+1}\in B$ or vice versa, then necessarily  $\mathbf X_n=\sigma$. 

Thereafter, given such process starting from $\sigma$, Lamperti consider $N_n$ the occupation time of $A$ up to the time $n$ -- the state $\sigma$ being counted when the process comes from $A$ -- and he introduces $F(x)$ the generating function associated with the recurrence time of $\sigma$. This decomposition of the state space and how to count the occupation time is resumed by the weighted diagram in the left-hand side of Figure \ref{lampertistructure}. The following Theorem is stated and proved in its paper.

\begin{theo}
The mean occupation time $\{N_n/n\}_{n\geq 1}$ converges in distribution as $n$ goes to infinity  to a non-degenerate limit if and only if there exist $\alpha$ and $p$ in $(0,1)$ such that
\begin{equation}
\lim_{n\to\infty}\mathbb E[N_n/n]=p\quad\mbox{and}\quad \lim_{x\uparrow 1} (1-x)F^\prime(x)/(1-F(x))=\alpha.
\end{equation}
Besides --in that case -- the limit distribution has for density on $(0,1)$ the function 
\begin{equation}
\frac{\sin(\pi\alpha)}{\pi}\frac{t^{\alpha-1}(1-t)^{\alpha-1}}{rt^{2\alpha}+2\cos(\pi\alpha)t^\alpha(1-t)^{\alpha}+r^{-1}(1-t)^{2\alpha}},
\end{equation}
with $r:=(1-p)/p$.
\end{theo}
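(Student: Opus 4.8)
The plan is to reduce the statement to the analysis of generating functions attached to the renewal structure induced by the recurrent state $\sigma$, and then to extract the limit law by a Tauberian argument matched against the explicit transform of the stated density. First I would decompose every trajectory started at $\sigma$ into its successive excursions away from $\sigma$. The hypothesis that any passage between $A$ and $B$ must go through $\sigma$ guarantees that each excursion lies entirely in $A\cup\{\sigma\}$ or entirely in $B\cup\{\sigma\}$; call these the excursions of type $A$ and of type $B$. Splitting the first-return generating function accordingly, $F=F_A+F_B$, and recording the occupation time of $A$ by weighting each step of a type-$A$ excursion, I would form the double generating function
\begin{equation}
\Phi(x,s):=\sum_{n\ge 0}\mathbb{E}_\sigma\!\left[s^{N_n}\right]x^n.
\end{equation}
A decomposition at the last visit to $\sigma$ before time $n$ then yields a closed expression whose denominator is $1-F_A(xs)-F_B(x)$, the numerator encoding the incomplete excursion in progress at time $n$.

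Next I would convert the two hypotheses into regular variation statements at $x=1$. The condition $\lim_{x\uparrow1}(1-x)F'(x)/(1-F(x))=\alpha$ says precisely that $1-F(x)$ is regularly varying of index $\alpha$, hence, by Karamata's Tauberian theorem, that the recurrence time of $\sigma$ has a tail regularly varying of index $-\alpha$; and $\lim_n\mathbb{E}[N_n/n]=p$ forces, after a Tauberian computation, $1-F_A(x)\sim p\,(1-F(x))$ and $1-F_B(x)\sim(1-p)(1-F(x))$, so that the type-$A$ and type-$B$ excursion lengths carry comparable heavy tails in the proportion $p:(1-p)$. The non-degeneracy of the limit forces $\alpha\in(0,1)$ and $p\in(0,1)$: for $\alpha\geq 1$ the recurrence time is integrable and a renewal law of large numbers makes $N_n/n$ collapse to the constant $p$, while $p\in\{0,1\}$ would again give a degenerate limit.

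I would then study $\mathbb{E}_\sigma[s^{N_n}]$ in the joint regime $x\uparrow1$, $s\uparrow1$ with $(1-s)/(1-x)$ held fixed, applying a Karamata-type Tauberian theorem to $\Phi(x,s)$. Using the asymptotics above, the denominator behaves, up to a common slowly varying factor that cancels, like $p\,(1-xs)^\alpha+(1-p)(1-x)^\alpha$, and the incomplete-excursion term supplies the remaining factor. This produces an explicit limiting transform for $N_n/n$, namely the transform of the law of $\xi_A/(\xi_A+\xi_B)$ where $\xi_A,\xi_B$ are independent one-sided $\alpha$-stable variables with scale ratio fixed by $p:(1-p)$. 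Identifying this generalized arcsine (Lamperti) law with the stated density is the concluding step: one may either invert its Stieltjes transform directly or recognize it as the distribution of such a stable ratio, which is exactly the ratio $\mathbf D_\alpha$ appearing in \eqref{arcsin}, with $p$ playing the role of $(1+\mathbf b_{\scr S})/2$ and $r=(1-p)/p$. For the converse, I would run the same Tauberian machinery backwards, using the continuity/characterization theory of regular variation to show that convergence of $N_n/n$ to a non-degenerate law forces the regular variation of $1-F$ with some index $\alpha$ together with the convergence of the means to some $p$.

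The main obstacle is the joint Tauberian analysis: controlling the two slowly varying factors so that they genuinely cancel in the limit, and handling rigorously the incomplete excursion that straddles time $n$, whose contribution is not negligible precisely because the excursion lengths have infinite mean. The converse direction is equally delicate, since one must deduce the existence of the index $\alpha\in(0,1)$ and of the limiting mean $p\in(0,1)$ from the mere non-degeneracy of the limit, rather than assuming them. The explicit identification of the transform with the displayed density is a further nontrivial computation, most cleanly carried out through the stable-ratio representation rather than by a direct Stieltjes inversion.
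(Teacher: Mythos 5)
Your plan is essentially the route the paper takes: this theorem is Lamperti's, the paper cites \cite{Lamperti!} for its proof, and the computation it reproduces in Section \ref{Lampertipro} for its own two-recurrent-state variant is exactly your scheme --- renewal decomposition at the recurrent state $\sigma$, a closed form for the double generating function whose denominator is built from the first-return generating function, regular variation of $1-F$ read off from the hypothesis on $(1-x)F'(x)/(1-F(x))$, the joint limit $x\uparrow 1$ with $y=e^{-\lambda(1-x)}$ in which the slowly varying factors cancel, and identification of the resulting limit (a Stieltjes transform of the limit law) with the generalized arcsine density; the paper, like you, also records the alternative identification via the stable ratio (\ref{arcsin}) and the excursion theory of \cite{PitYor}.

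The one place where your proposal is thinner than Lamperti's actual argument is the Tauberian passage from $\lim_{x\uparrow1}(1-x)\Phi\bigl(x,e^{-\lambda(1-x)}\bigr)$ to convergence in distribution of $N_n/n$. Karamata's Tauberian theorem applies to a single power series with monotone (or one-signed) coefficients, whereas here the coefficients $\mathbb E[s^{N_n}]$ move with the Abel parameter, so there is no off-the-shelf ``joint'' Tauberian theorem to invoke; the Abel average over $n$ only delivers $\mathbb E[(1+\lambda W)^{-1}]$ for the putative limit $W$, not the convergence of the individual terms. Lamperti closes this by the method of moments, which is what the paper alludes to when it says the proof rests on ``a precise asymptotic estimate of the moments associated with $N_n$'': differentiating $k$ times in $s$ at $s=1$ isolates series whose coefficients are monotone in $n$ (since $n\mapsto N_n$ is nondecreasing), so Karamata applies coefficient-wise and gives $\lim_n\mathbb E[(N_n/n)^k]$ for every $k$; the limit law, supported in $[0,1]$, is then determined by its moments and its Stieltjes transform inverted by the Stieltjes--Perron formula. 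The same monotone-coefficient argument applied to the first two moment series is what makes your converse direction rigorous. You correctly flag this step as the main obstacle; the moment route is the standard way to fill it, and with that substitution your outline matches the cited proof.
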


\begin{figure*}[!b]
\centering
\includegraphics[width=124mm]{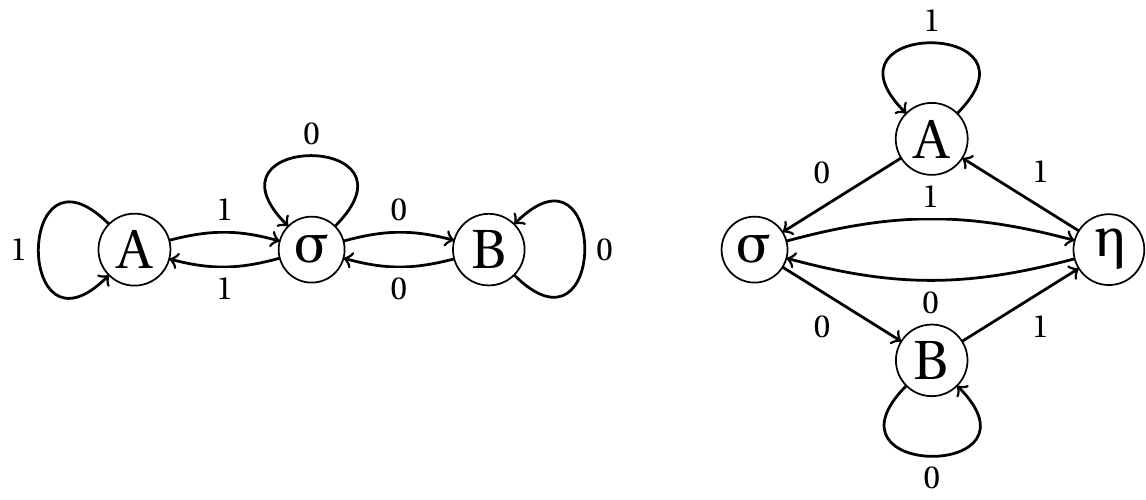}
\caption{Original and current Lamperti decomposition of the state space}
\label{lampertistructure}
\end{figure*} 

The proof is based on a precise asymptotic estimate of the moments associated with $N_n$ as $n$ goes to infinity -- via suitable generating functions and their regular variations -- in order to identify the limit of the Stieljes transform of $N_n/n$.

In our case, it would seem natural to see $A$ as the rises of $S$ so that it can be write more or less as $S_n=2N_n-n$. Unfortunately, we have not been able to fit this situation to the latter Lamperti dynamic. As a matter of fact, the communication diagram and the way to compute the suitable occupation time $N_n$ can be given as in the right hand side of Figure \ref{lampertistructure}.

To be more precise, if we consider the 2-letters  process $\mathbf X_{n}:=X_{n}X_{n+1}$ with the decomposition of  $\{\ttu,\ttd\}\times \{\ttu,\ttd\}$ given by $A:=\{\ttu\ttu\}$, $B:=\{\ttd\ttd\}$, $\sigma:=\ttu\ttd$ and $\eta:=\ttd\ttu$ and if  we denote by $N_n$ the occupation time of $A$ up to the time $n$ -- the state $\eta$ being counted or not according to well chosen weights -- the PRW satisfies for any $n\geq 1$ the relation $S_{n}=2N_{n-1}-n$. Note also that $\sigma$ and $\eta$ are recurrent states and since the random times spend in $A$ and $B$ are distributed as $\tau_1^\ttu$ and $\tau_1^\ttd$ respectively  the recurrence times of $\sigma$ and $\eta$ are both equal in law to $\tau_1=\tau_1^\ttu+\tau_1^\ttd$. 

To get the limit distribution we adapt and follow a part of the proof in \cite{Lamperti!}. To this end, we introduce $p_{n,k}$ the probability -- starting from $\sigma$ -- that $N_{n}=k$ and we denote  by $u_{n}$ and $d_n$ the probability that $\tau_{1}^{\ttu}=n$ and $\tau_{1}^{\ttd}=n$ respectively. Thereafter,  by conditioning with respect to the complete three events  $\{\tau_1\leq n\}\sqcup\{\tau_1\geq n+1,\tau^{\ttd}_1\leq n\}\sqcup\{\tau^\ttd_1\geq n+1\}$ we obtain
\begin{equation}
p_{n,k}=\sum_{\substack{m+l\leq  n\\m\leq k}}	d_{l}\,u_{m}\,p_{n-(m+l),k-m}+\sum_{l\leq n}d_{l}\tail_{\ttu}(n-l)\delta_{k,n-l+1}+\tail_{\ttd}(n)\delta_{k,0},
\end{equation}
with $\delta_{i,j}$ is $1$ or $0$ according to $i=j$ or not. Then, let $F_\ell(z)$ be the generating function of $\tau^\ell_1$ and $T_\ell(z)$ be the one associated with its tail distribution. it follows that the double generating function associated with the latter difference equation satisfies 
\begin{equation}
P(x,y):=\sum_{n,k\geq 0} p_{n,k}\,x^{n}\,y^{k}=\frac{F_{\ttd}(y)T_{\ttu}(xy)y+T_{\ttd}(x)}{1-F_\ttd(x)F_\ttu(xy)},	
\end{equation}
From our stability assumption and  classical results on regular variations we get that  there exists a slowly varying function $L(z)$ such that
\begin{equation}
1-F_\ttu(z)=(1-z)T_\ttu(z)\;\underset{z\uparrow 1}{\sim}\; \left(\frac{1+\mathbf b_{\scr S}}{2}\right)L\left(\frac{1}{1-z}\right)(1-z)^\alpha,
\end{equation}
the same asymptotic and equality being also true replacing $\ttu$ by $\ttd$ and $({1+\mathbf b_{\scr S})}/{2}$ by $({1-\mathbf b_{\scr S})}/{2}$. Then we can check that for any $\lambda>0$,
\begin{equation}
\lim_{x\uparrow 1}(1-x)P\left(x,e^{-\lambda(1-x)}\right)=	\frac{(1+\lambda)^{\alpha-1}+\mathbf r_{\scr S}}{(1+\lambda)^{\alpha}+\mathbf r_{\scr S}},
\end{equation}
and thereafter the proof follows exactly the same lines as \cite{Lamperti!}. We obtain the arcsine Lamperti density. Note that it may be possible to state and prove a general theorem for such Lamperti processes.\\

This completes the proof of the anomalous situation. 
\hfill $\qed$

\section{Equivalent characterizations of Assumption \ref{assglobal}}

\setcounter{equation}{0}

\label{appendix}

In the continuity of (\ref{def-tail}) and (\ref{truncatedmeansum}) we can consider $\tail_{\mathtt c}(t)$ and ${V}_{\mathtt c}(t)$ the two-sided distribution tail and the truncated second moment of the central random variable $\tau_{1}^{\mathtt c}$ given in (\ref{cond10}). In the following, we denote by ${\rm RV}(\lambda)$ the set of regularly varying functions of index $\lambda\in\mathbb R$, also named the set of slowly varying functions $\rm SV$ if $\lambda=0$. It is well known that  hypothesis (\ref{cond10})  is equivalent to  
\begin{equation}\label{equip}
{V}_{\mathtt c}(t)\in {\rm RV}(2-\alpha),\quad\mbox{or equivalently when $\alpha\neq 2$,}\quad 	\tail_{\mathtt c}(t)\in{\rm RV}(-\alpha),
\end{equation}
with an additional tail balance criterion when $\alpha\neq 2$: there exists $\beta\in[-1,1]$ with
\begin{equation}\label{balancetauc0}
\lim_{t\to\infty}\frac{\mathbb P(\tau^{\mathtt c}_1> t)-\mathbb P(\tau^{\mathtt c}_1< -t)}{\mathbb P(\tau^{\mathtt c}_1>t)+\mathbb P(\tau^{\mathtt c}_1<-t)}=\beta.
\end{equation}
Remark from \cite[Theorem 2, Chap. VIII.9, p. 283]{Feller} that
\begin{equation}
\lim_{t\to\infty}\frac{t^{2}\tail_{\mathtt c}(t)}{V_{\mathtt c}(t)}=\frac{2-\alpha}{ \alpha},	
\end{equation}
To go further, introduce
\begin{equation}\label{sumtheta}
\tail(t):=\tail_{\ttu}(t)+\tail_{\ttd}(t)
\quad\mbox{and}\quad	
V(t):=V_{\ttu}(t)+V_{\ttd}(t).
\end{equation}

The two following lemmas gives another possible interpretations and choices of condition (\ref{cond10}) and of the normalizing functions. Their proofs are straightforward computations using only classical results on regularly varying functions which can be found in \cite{BGT} for instance, they are omitted.

\begin{lem}\label{lemmeequi}
Assuming the existence of the  mean drift $\mathbf m_{\scr S}\in(-1,1)$, the hypothesis  (\ref{cond10}) is equivalent when $\alpha\neq 2$ to  $Y_{1}=\tau_{1}^{\ttu}-\tau_{1}^{\ttd}\in{\rm D}(\alpha)$ but also to
\begin{equation}\label{tailsum0}
\Big[V(t)\in{\rm RV}(2-\alpha)\quad\mbox{or}\quad \tail(t)\in{\rm RV}(-\alpha)\Big]\quad\mbox{and}\quad \lim_{t\to\infty}\frac{\tail_{\ttu}(t)-\tail_{\ttd}(t)}{\tail_{\ttu}(t)+\tail_{\ttd}(t)}=\mathbf b_{\scr S}.
\end{equation}
In that case, the functions $\tail(t)$ and $V(t)$ are respectively the tail distribution and the truncated second moment of $Y_{1}$ but also of $\tau_{1}$. Besides, one has 
\begin{equation}
\tail_{\mathtt c}(t)\;\underset{t\to\infty}{\sim}\;\left[(1-\mathbf m_{\scr S})^{\alpha}\left(\frac{1+\mathbf b_{\scr S}}{2}\right)+(1+\mathbf m_{\scr S})^{\alpha}\left(\frac{1-\mathbf b_{\scr S}}{2}\right)\right]\tail(t),
\end{equation}
and thus the same asymptotic between $V_{\mathtt c}(t)$ and $V(t)$. Moreover, the balance term $\beta$ in (\ref{balancetauc0}) satisfies
\begin{equation}\label{balancetauc00}
\beta=\frac{(1-\mathbf m_{\scr S})^{\alpha}(1+\mathbf b_{\scr S})-(1+\mathbf m_{\scr S})^{\alpha}(1-\mathbf b_{\scr S})}{(1-\mathbf m_{\scr S})^{\alpha}(1+\mathbf b_{\scr S})+(1+\mathbf m_{\scr S})^{\alpha}(1-\mathbf b_{\scr S})}.
\end{equation}
Finally, when $\alpha=2$, 
\begin{equation}\label{equi}
\mathbb V[\tau^{\mathtt c}_{1}\mathds 1_{\{|\tau^{\mathtt c}_1|\leq t\}}]
\;\underset{t\to\infty}{\sim}\;
{V}_{\mathtt c}(t)\;\underset{t\to\infty}{\sim}\;
\Sigma(t)^{2}.	
\end{equation}
\end{lem}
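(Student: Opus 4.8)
The plan is to reduce every assertion to the regular variation of the one-sided tails $\tail_{\ttu}$ and $\tail_{\ttd}$ of the two \emph{independent} nonnegative running times, and then to read off the two-sided tails of the linear combinations $\tau_1^{\mathtt c}$, $Y_1=\tau_1^{\ttu}-\tau_1^{\ttd}$ and $\tau_1=\tau_1^{\ttu}+\tau_1^{\ttd}$ by the principle of a single big jump. The two standard inputs, both available from classical regular variation theory and from (\ref{equip}), are: for $\alpha\in(0,2)$ a variable lies in ${\rm D}(\alpha)$ iff its two-sided tail is in ${\rm RV}(-\alpha)$ and its tails balance, while for $\alpha=2$ it lies in ${\rm D}(2)$ iff its truncated second moment is in ${\rm SV}$; and Karamata's theorem, which converts between $V_{\bullet}\in{\rm RV}(2-\alpha)$ and $\tail_{\bullet}\in{\rm RV}(-\alpha)$. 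The elementary fact I would isolate first is that, for independent nonnegative $\tau_1^{\ttu},\tau_1^{\ttd}$ with tails in ${\rm RV}(-\alpha)$ and constants $p,q>0$,
\[
\mathbb P(p\tau_1^{\ttu}-q\tau_1^{\ttd}>t)\;\underset{t\to\infty}{\sim}\;\tail_{\ttu}(t/p)\;\sim\;p^{\alpha}\tail_{\ttu}(t),\qquad \mathbb P(p\tau_1^{\ttu}-q\tau_1^{\ttd}<-t)\;\underset{t\to\infty}{\sim}\;\tail_{\ttd}(t/q)\;\sim\;q^{\alpha}\tail_{\ttd}(t),
\]
i.e. the right tail is carried solely by $\tau_1^{\ttu}$ and the left tail solely by $\tau_1^{\ttd}$; this is proved by the sandwich $\tail_{\ttu}(t/p)\geq \mathbb P(p\tau_1^{\ttu}-q\tau_1^{\ttd}>t)\geq \tail_{\ttu}(t(1+q\varepsilon)/p)(1-\tail_{\ttd}(\varepsilon t))$ followed by $\varepsilon\downarrow 0$, the two bounds collapsing because $\tail_{\ttu}$ is regularly varying and $\tail_{\ttd}(\varepsilon t)\to 0$.

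For the easy direction I would start from (\ref{tailsum0}): if $\tail=\tail_{\ttu}+\tail_{\ttd}\in{\rm RV}(-\alpha)$ and $(\tail_{\ttu}-\tail_{\ttd})/\tail\to\mathbf b_{\scr S}$, then $\tail_{\ttu}\sim\frac{1+\mathbf b_{\scr S}}{2}\tail$ and $\tail_{\ttd}\sim\frac{1-\mathbf b_{\scr S}}{2}\tail$, so both one-sided tails lie in ${\rm RV}(-\alpha)$. Applying the displayed fact with $(p,q)=(1-\mathbf m_{\scr S},1+\mathbf m_{\scr S})$ to $\tau_1^{\mathtt c}$ and with $(p,q)=(1,1)$ to $Y_1$ immediately yields the two-sided tail asymptotics — in particular $\tail_{\mathtt c}(t)\sim[(1-\mathbf m_{\scr S})^{\alpha}\frac{1+\mathbf b_{\scr S}}{2}+(1+\mathbf m_{\scr S})^{\alpha}\frac{1-\mathbf b_{\scr S}}{2}]\tail(t)$ and the tail of $Y_1$ being $\sim\tail(t)$ — together with the balance ratios, giving both $\tau_1^{\mathtt c},Y_1\in{\rm D}(\alpha)$, the formula (\ref{balancetauc00}) for $\beta$, and the identification of $\tail$, $V$ as the tail and truncated second moment of $Y_1$ and of $\tau_1$ (the sum $\tau_1$ handled the same way with both signs positive). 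This establishes $(\ref{tailsum0})\Rightarrow(\ref{cond10})$ and $(\ref{tailsum0})\Rightarrow[Y_1\in{\rm D}(\alpha)]$.

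The main obstacle is the converse: deducing the regular variation of each one-sided tail from that of the combination, \emph{without} assuming it beforehand. Here the point is that the above sandwich can be run in reverse. Assuming $\tau_1^{\mathtt c}\in{\rm D}(\alpha)$, its positive tail $h(t):=\mathbb P(\tau_1^{\mathtt c}>t)$ is in ${\rm RV}(-\alpha)$; writing $g(t):=\tail_{\ttu}(t/(1-\mathbf m_{\scr S}))$ one has $h(t)\leq g(t)$ and $g(t(1+(1+\mathbf m_{\scr S})\varepsilon))(1-\tail_{\ttd}(\varepsilon t))\leq h(t)$. Substituting $t\mapsto t/(1+(1+\mathbf m_{\scr S})\varepsilon)$ in the second inequality and using the regular variation of $h$ \emph{alone}, one gets $h(t)\leq g(t)\leq (1+(1+\mathbf m_{\scr S})\varepsilon)^{\alpha}h(t)(1+o(1))$; letting $\varepsilon\downarrow 0$ forces $g\sim h$, whence $\tail_{\ttu}\in{\rm RV}(-\alpha)$, and symmetrically $\tail_{\ttd}\in{\rm RV}(-\alpha)$ from the left tail. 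The existence and value of the balance limit $\mathbf b_{\scr S}$ then follow from the two tail ratios, yielding (\ref{tailsum0}); the same scheme applied to $Y_1$ shows $[Y_1\in{\rm D}(\alpha)]\Rightarrow(\ref{tailsum0})$, closing the cycle. I expect this recovery step to be the only genuinely delicate point, everything else being bookkeeping with ${\rm RV}$ functions.

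Finally, for $\alpha=2$ and (\ref{equi}) I would note that $\mathbb V[\tau_1^{\mathtt c}\mathds 1_{\{|\tau_1^{\mathtt c}|\leq t\}}]$ and $V_{\mathtt c}(t)$ differ only by the square of the truncated mean, which is negligible because $\tau_1^{\mathtt c}$ is centered when $\mathbf m_{\scr S}=\mathbf d_{\scr S}$ (the truncated mean tends to $0$ while $V_{\mathtt c}$ does not vanish), giving the first equivalence. For $V_{\mathtt c}(t)\sim\Sigma(t)^{2}$ I would expand $(\tau_1^{\mathtt c})^2$, discard the cross term $\tau_1^{\ttu}\tau_1^{\ttd}$ as lower order, and reduce each truncation event to $\{(1-\mathbf m_{\scr S})\tau_1^{\ttu}\leq t\}$, resp. $\{(1+\mathbf m_{\scr S})\tau_1^{\ttd}\leq t\}$, so that the two surviving contributions reduce, via Karamata's theorem and the slow variation of $V_{\ttu},V_{\ttd}$, to the two terms in the definition of $\Sigma(t)^2$. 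The passage between the tail and truncated-second-moment formulations of every condition is likewise Karamata's theorem, which is what makes the statements of the lemma interchangeable.
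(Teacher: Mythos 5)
The paper omits the proof of this lemma entirely (it is declared a straightforward consequence of classical regular-variation theory), so there is no written argument to compare with; your single-big-jump sandwich combined with Karamata's theorem is certainly the intended route, and for $\alpha\neq 2$ your argument is essentially complete. The reverse sandwich recovering $\tail_{\ttu},\tail_{\ttd}\in{\rm RV}(-\alpha)$ from the tails of $\tau_1^{\mathtt c}$ alone is indeed the only non-trivial point, and you handle it correctly. The one loose end there is the totally skewed case $\beta=\pm1$ (equivalently $\mathbf b_{\scr S}=\pm1$): then one of the one-sided tails of $\tau_1^{\mathtt c}$ is negligible relative to the other and need not itself be regularly varying, so the reverse sandwich does not deliver, say, $\tail_{\ttd}\in{\rm RV}(-\alpha)$; it still gives $\tail_{\ttd}(t)=o(\tail_{\ttu}(t))$, which is all that (\ref{tailsum0}) requires, so this is a one-sentence repair rather than a real gap.

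The $\alpha=2$ step, however, does not close as written. Expanding $(\tau_1^{\mathtt c})^2$ and localizing the truncation events as you propose yields
\begin{equation*}
V_{\mathtt c}(t)\;\underset{t\to\infty}{\sim}\;(1-\mathbf m_{\scr S})^{2}\,V_{\ttu}\!\left(\frac{t}{1-\mathbf m_{\scr S}}\right)+(1+\mathbf m_{\scr S})^{2}\,V_{\ttd}\!\left(\frac{t}{1+\mathbf m_{\scr S}}\right),
\end{equation*}
with \emph{squared} prefactors, whereas $\Sigma(t)^2$ in (\ref{sigma0}) carries first powers; these differ by non-vanishing constant factors unless $\mathbf m_{\scr S}=0$ (test case: $\tau^\ttd_1\equiv 1$ and $V_{\ttu}(t)\sim c\log t$, for which the ratio of the two sides tends to $1-\mathbf m_{\scr S}$). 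So you must either exhibit where a factor $(1\mp\mathbf m_{\scr S})$ disappears or flag that the target (\ref{equi}) needs re-examination; asserting that the surviving contributions "reduce to the two terms in the definition of $\Sigma(t)^2$" is precisely the point at issue. Moreover, in the finite-variance sub-case of $\alpha=2$ the cross term $-2(1-\mathbf m_{\scr S}^{2})\,\E[\tau^\ttu_1\tau^\ttd_1\mathds 1]$ and the squared truncated mean are of the same (constant) order as $\Sigma(t)^2$ itself, so they cannot be "discarded as lower order"; there (\ref{equi}) amounts to the identity $\mathbb V[\tau_1^{\mathtt c}]=(1-\mathbf m_{\scr S})\E[(\tau^\ttu_1)^2]+(1+\mathbf m_{\scr S})\E[(\tau^\ttd_1)^2]$, which already fails for i.i.d.\@ running times. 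This is the one place where your proposal contains a genuine gap rather than omitted bookkeeping.
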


Hence, when $\alpha\neq 2$, Assumption \ref{assglobal} means that at least one of the persistence times belongs to the domain of attraction of a stable distribution,  the tail distribution of the other one being in some sense comparable with the first one. However, in view of \cite[Theorem 4.5., p. 790]{ShimGauss}, we can state

\begin{rem} When $\alpha=2$, it is possible for all linear combination of $\tau_{1}^{\ttu}$ and $\tau_{1}^{\ttd}$ distinct of  $\tau^{\mathtt c}_{1}$ to not belong to the domain of attraction of a standard normal distribution. 
\end{rem}

As a consequence, it turns out that $a(u)$, $s(u)$ and thus $\lambda(u)$ -- together with  $\Sigma(t)^{2}$ and $\Theta(t)$ inside their definitions --  are regularly varying functions.  In particular, we can see that the sub-linear normalizing function $\lambda(u)$ is  asymptotically linear if and only if $\alpha\in(0,1)$. in any case, there exists a slowly varying function $\ell(u)$ such that
\begin{equation*}
\lambda(u)\;\underset{u\to\infty}{\sim}\;\ell(u)\,u^{\frac{1}{\alpha \vee  1}}.
\end{equation*}

%~ \begin{proof}[Proof of Lemma \ref{bound}]
%~ These bounds are the consequence of the integration by parts formula (named the Abel's transformation) and the fact that the increments satisfy,
%~ \begin{equation*}
%~ \int_{n}^{n+1} x^{p} d\mu(x)  \;\underset{n\to\infty}{\sim}\; \frac{1}{n^{q-p}}   \int_{n}^{n+1} x^{q} d\mu(x).
%~ \end{equation*}
%~ \end{proof} 

The second lemma, coupled with the first one, will be useful -- among other considerations -- to discriminate whether the persistence times are integrable or not and thus to identify $\mathbf m_{\scr S}$. Again, the proof is omitted. Given two real functions $f(t)$ and $g(t)$ defined on a neighbourhood of infinity, we set
$f(t){\asymp} g(t)$ when there exists $c>0$ such that for $t$ sufficiently large $c^{-1} g(t)\leq f(t)\leq c g(t)$.

\begin{lem}\label{lemmebound}
Let $\mu$ be a positive measure on $[0,\infty)$ and for any $p\geq 0$ and $t\geq 0$,
\begin{equation*}
M_{p}(t):=\int_{[0,t]} x^{p}\mu(dx)\quad\mbox{and}\quad T_{p}(t):=\int_{(t,\infty)} x^{p} \mu(dx).
\end{equation*}
Then for any $q>p\geq 0$,
\begin{equation*}
M_{p}(t) \;\underset{}{\asymp}\; 
\int_{[1,t]}\frac{M_{q}(u)}{u^{q-p+1}} du + \frac{M_{q}(t)}{t^{q-p}}
\quad\mbox{and}\quad
T_{p}(t) \;\underset{}{\asymp}\; 
\int_{(t,\infty)}\frac{M_{q}(u)}{u^{q-p+1}} du + \frac{M_{q}(t)}{t^{q-p}}.
\end{equation*}
\end{lem}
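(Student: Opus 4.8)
The plan is to reduce both estimates to \emph{exact} identities obtained by Fubini--Tonelli: since every integrand is nonnegative, I may freely exchange the outer $du$-integral with the inner $\mu(dx)$-integral, and this turns each right-hand side into a combination of the truncated moments $M_p(t)$, $T_p(t)$ and $M_q(t)$. The only analytic input needed to control the resulting cross terms is the elementary bound
\begin{equation*}
M_q(t)=\int_{[0,t]}x^{q-p}\,x^{p}\,\mu(dx)\leq t^{q-p}\int_{[0,t]}x^{p}\,\mu(dx)=t^{q-p}M_p(t),
\end{equation*}
i.e. $t^{-(q-p)}M_q(t)\leq M_p(t)$. Writing $r:=q-p>0$ for brevity, all the inner $u$-integrals are of the form $\int_a^{\,\cdot} u^{-(r+1)}\,du$ and are evaluated explicitly using $r>0$.

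For the first relation I would compute, for $t\geq 1$,
\begin{equation*}
\int_{[1,t]}\frac{M_q(u)}{u^{r+1}}\,du=\int_{[0,t]}x^{q}\!\int_{\max(x,1)}^{t}\frac{du}{u^{r+1}}\,\mu(dx)=\frac{1}{r}\Big[\int_{[0,1]}x^{q}\mu(dx)+\int_{(1,t]}x^{p}\mu(dx)-t^{-r}M_q(t)\Big],
\end{equation*}
using $q-r=p$ on the range $x>1$. Adding $M_q(t)/t^{r}$ and substituting $\int_{(1,t]}x^{p}\mu(dx)=M_p(t)-\int_{[0,1]}x^{p}\mu(dx)$, the right-hand side of the claim becomes $\tfrac{1}{r}M_p(t)$ plus a fixed (time-independent) constant plus $\bigl(1-\tfrac{1}{r}\bigr)\,t^{-r}M_q(t)$. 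Thus the tail integral has reconstructed $M_p(t)$ up to an additive constant, while the leftover term is squeezed between $0$ and $M_p(t)$ by the displayed inequality; taking worst cases on the sign of $1-\tfrac1r$ gives $c\,M_p(t)+\mathrm{const}\leq(\text{RHS})\leq C\,M_p(t)+\mathrm{const}$ with $c=\min(\tfrac1r,1)>0$. In the regularly varying setting where the lemma is used (so $M_p(t)$ is itself regularly varying and the constant is negligible), this yields $M_p(t)\asymp(\text{RHS})$.

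For the second relation the same exchange gives the exact identity
\begin{equation*}
\int_{(t,\infty)}\frac{M_q(u)}{u^{r+1}}\,du=\int_{[0,\infty)}x^{q}\!\int_{\max(x,t)}^{\infty}\frac{du}{u^{r+1}}\,\mu(dx)=\frac{1}{r}\Big[t^{-r}M_q(t)+T_p(t)\Big],
\end{equation*}
so the whole right-hand side of the claim equals $\tfrac{1}{r}T_p(t)+\tfrac{r+1}{r}\,t^{-r}M_q(t)$, which is manifestly $\asymp T_p(t)+t^{-r}M_q(t)$. The bound $T_p(t)\leq r\,(\text{RHS})$ is then immediate, so the only missing ingredient is the \emph{reverse} comparison $t^{-r}M_q(t)\lesssim T_p(t)$.

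This last comparison is exactly the hard part, and it is where the regularly varying framework of Assumption~\ref{assglobal} and Lemma~\ref{lemmeequi} genuinely enters. When the tail $\bar\mu(t):=\mu((t,\infty))$ is regularly varying of index $-\alpha$ with $p<\alpha<q$, Karamata's theorem \cite{BGT} gives $t^{-r}M_q(t)\sim\tfrac{\alpha}{q-\alpha}\,t^{p}\bar\mu(t)$ and $T_p(t)\sim\tfrac{\alpha}{\alpha-p}\,t^{p}\bar\mu(t)$, so both are comparable to $t^{p}\bar\mu(t)$ and the reverse bound, hence $T_p(t)\asymp(\text{RHS})$, follows. I would stress that some nondegeneracy of the tail is truly needed here — for a compactly supported $\mu$ one has $T_p(t)=0$ for large $t$ while $t^{-r}M_q(t)>0$ — so the unconditional content of the lemma is precisely the two Fubini identities above, and Karamata's theorem supplies the final comparison in the setting in which the lemma is invoked.
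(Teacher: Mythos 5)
The paper omits the proof of this lemma (it is dismissed as a straightforward computation with regularly varying functions), so there is no official argument to compare against; your Tonelli route is clearly the intended one and your two exact identities are correct. For the first relation, your reduction to $\mathrm{RHS}=\tfrac1r M_p(t)+\bigl(1-\tfrac1r\bigr)t^{-(q-p)}M_q(t)+C$ with $r=q-p$, $C\le 0$ a constant, together with $t^{-(q-p)}M_q(t)\le M_p(t)$, does give the two-sided comparison; the additive constant is harmless as soon as $\mu((0,\infty))>0$, because the integral $\int_{[1,t]}M_q(u)u^{-(r+1)}du$ is nondecreasing and eventually bounded below by a positive constant, so $M_p(t)\le \max(r,1)\,\mathrm{RHS}+\mathrm{const}\lesssim \mathrm{RHS}$. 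Your remark on the second relation is a genuine and correct criticism of the statement as written: since the right-hand side equals $\tfrac1r T_p(t)+\tfrac{r+1}{r}t^{-(q-p)}M_q(t)$ exactly, the inequality $T_p(t)\le r\cdot\mathrm{RHS}$ is unconditional, but the reverse comparison fails for any nonzero compactly supported $\mu$ (where $T_p(t)=0$ eventually while the right-hand side stays positive), so the lemma is literally false for arbitrary positive measures and genuinely requires the extra input $t^{-(q-p)}M_q(t)\lesssim T_p(t)$. As you say, this is exactly what Karamata's theorem provides in the regularly varying regime of Assumption \ref{assglobal} and Lemma \ref{lemmeequi} where the lemma is invoked (indices strictly between $p$ and $q$), and this is consistent with the comparison $t^{2}\tail_{\mathtt c}(t)/V_{\mathtt c}(t)\to(2-\alpha)/\alpha$ quoted from Feller in Section \ref{appendix}. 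In short: your proof is correct, it is the argument the authors presumably had in mind, and it has the added merit of locating precisely where the hypothesis of regular variation is indispensable rather than cosmetic.
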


Therefore, the persistence times are both integrable $\mathbf d_{\scr T}<\infty$ when $\alpha\in(1,2]$ and in that case $\mathbf m_{\scr S}=\mathbf d_{\scr S}$. Besides, they are both of infinite mean when $\alpha\in(0,1)$ and then $\mathbf m_{\scr S}=\mathbf b_{\scr S}$, the two latter situations being possible when $\alpha=1$.

\bibliography{biblio-vnby}
\bibliographystyle{unsrturl}
\end{document}